\theoremstyle{plain}
\newtheorem{theorem}{Theorem}[section]
\newtheorem{proposition}[theorem]{Proposition}
\newtheorem{corollary}[theorem]{Corollary}
\newtheorem{lemma}[theorem]{Lemma}
\newtheorem{Lemma}[theorem]{Lemma}
\newtheorem*{Question}{Question}
\DeclareMathOperator*{\Aster}{\text{\LARGE{\textasteriskcentered}}}
\DeclareMathOperator{\aster}{\text{\LARGE{\textasteriskcentered}}}
\DeclareMathOperator{\piprod}{\raisebox{-0.1em}{\huge{$\pi$}}\kern -0.2em}
\newtheorem*{I(n)}{$\I(n)$}
\newtheorem*{III(2k+1)}{$\III(2k+1)$}
\newtheorem*{III'(2k+1)}{$\III'(2k+1)$}
\newtheorem*{V(n)}{$\V(n)$}
\theoremstyle{definition}
\newtheorem{example}[theorem]{Example}
\newtheorem{remark}[theorem]{Remark}
\newtheorem*{notation}{Notation}
\newtheorem*{example*}{Example}
\newtheorem*{Remark}{Remark}
\newcommand{\ca}{\mathcal {A}}
\newcommand{\cb}{\mathcal {B}}
\newcommand{\cac}{\mathcal {C}}
\newcommand{\ch}{\mathcal {H}}
\newcommand{\ci}{\mathcal {I}}
\newcommand{\ck}{\mathcal {K}}
\newcommand{\cl}{\mathcal {L}}
\newcommand{\cn}{\mathcal {N}_{\bq}}
\newcommand{\cnp}{\mathcal {N}_{\bp}}
\newcommand{\cN}{\mathcal {N}}
\newcommand{\cp}{\mathcal {P}}
\newcommand{\car}{\mathcal {R}}
\newcommand{\cs}{\mathcal {S}}
\newcommand{\cu}{\mathcal {U}}
\newcommand{\cv}{\mathcal {V}}
\newcommand{\FF}{{\mathbb F}}
\newcommand{\bd}{{\mathbf D}_\infty}
\newcommand{\zz}{{\mathbf Z}}
\newcommand{\ltwo}{L^2_{\mathbf {q}}}
\newcommand{\ltwop}{L^2_{\mathbf {p}}}
\newcommand{\Ltwo}{L^2_q}
\newcommand{\mup}{\mu_{\mathbf {p}}}
\newcommand{\muq}{\mu_{\mathbf {q}}}
\newcommand{\minus}{{-1}}
\newcommand{\eltwo}{{\ell^2}}
\newcommand{\rH}{{\ol {H}}}
\newcommand{\bR}{{\mathbf R}}
\newcommand{\bZ}{{\mathbf Z}}
\newcommand{\bC}{{\mathbf C}}
\newcommand{\bD}{{\mathbf D}}
\newcommand{\bp}{{\mathbf p}}
\newcommand{\bq}{{\mathbf q}}
\newcommand{\bs}{{\mathbf s}}
\newcommand{\bt}{{\mathbf t}}
\newcommand{\bone}{{\mathbf 1}}
\newcommand{\h}{\hat}
\newcommand{\wt}{\widetilde}
\newcommand{\wx}{\widetilde {X}}
\newcommand{\ol}{\overline}
\newcommand{\rh}{{\ol{H}\,}}
\def\clap#1{\hbox to 0pt{\hss#1\hss}}
\newcommand{\norm}[1]{\lVert#1\rVert}
\newcommand{\comment}[1]{}
\newcommand{\ga}{\alpha}
\newcommand{\gd}{\delta}
\newcommand{\gf}{\varphi}
\newcommand{\gs}{\sigma}
\newcommand{\gt}{\tau}
\newcommand{\gG}{\Gamma}
\newcommand{\OL}{{OL}}
\newcommand{\Aut}{\operatorname{Aut}}
\newcommand{\vertex}{\operatorname{Vert}}
\newcommand{\edge}{\operatorname{Edge}}
\newcommand{\flag}{\operatorname{Flag}}
\newcommand{\Hom}{\operatorname{Hom}}
\newcommand{\Ker}{\operatorname{Ker}}
\newcommand{\Ima}{\operatorname{Im}}
\newcommand{\Lk}{\operatorname{Lk}}
\newcommand{\cat}{\operatorname{CAT}}
\newcommand{\cone}{\operatorname{Cone}}
\newcommand{\tr}{\operatorname{tr}}
\newcommand{\pim}{\pi^\minus (J)}
\newcommand{\GR}{\operatorname{Gr}}
\newcommand{\grh}{\GR{H}}
\newcommand\mapright[1]{\smash{\mathop{\longrightarrow}\limits^{#1}}}
\newenvironment{enumerate1}{
\begin{enumerate}[\upshape (1)]}%
	{
\end{enumerate}
}
\newenvironment{enumerate1*}{
\begin{enumerate}[\upshape (*1)]}%
	{
\end{enumerate}
}
\newenvironment{enumeratei}{
\begin{enumerate}[\upshape (i)]}%
	{
\end{enumerate}
}
\newenvironment{enumeratea'}{
\begin{enumerate}[\upshape (a)$'$]}{
\end{enumerate}
}
\DeclareMathOperator{\I}{\bf I}
\DeclareMathOperator{\III}{\bf III}
\DeclareMathOperator{\V}{\bf V}
 \numberwithin{equation}{section} \setcounter{section}{0}
\begin{document}
\title{Cohomology computations for Artin groups, Bestvina-Brady groups, and graph products}
\author{Michael W. Davis\thanks{The first author was partially supported by NSF grant DMS 0706259.} \and{Boris Okun} }
\date{\today} \maketitle
\begin{abstract}
	We compute:
	\begin{itemize}
		\item the cohomology with group ring coefficients of Artin groups (or actually, of their associated Salvetti complexes), Bestvina--Brady groups, and graph products of groups,
		\item the $L^2$-Betti numbers of Bestvina--Brady groups and of graph products of groups,
		\item the weighted $L^2$-Betti numbers of graph products of Coxeter groups.
	\end{itemize}
	In the case of arbitrary graph products there is an additional proviso: either all factors are infinite or all are finite.
	(However, for graph products of Coxeter groups this proviso is unnecessary.) \smallskip

	\paragraph{AMS classification numbers.} Primary: 20F36, 20F55, 20F65, 20J06, 55N25 \\
	Secondary: 20E42, 57M07. \smallskip

	\paragraph{Keywords:} Artin group, Bestvina--Brady group, building, Coxeter group, graph product, right-angled Artin group, $L^2$-Betti number, weighted $L^2$-cohomology.
\end{abstract}

\section*{Introduction}\addcontentsline{toc}{section}{Introduction}

This paper concerns the calculation of the group cohomology, $H^*(G;N)$, for certain discrete groups $G$, where the $G$-module $N$ is either $\zz G$ or a von Neumann algebra $\cN(G)$.
Here $\cN (G)$ is a completion of the group algebra $\bR G$ acting on $L^2(G)$, the Hilbert space of square summable functions on $G$.
If $G$ acts properly and cocompactly on a CW complex $Y$, then one has the reduced $L^2$-cohomology spaces, $L^2H^*(Y)$.
These are Hilbert spaces with orthogonal $G$-actions.
As such, each has a ``von Neumann dimension'' with respect to $\cN(G)$.
The dimension of $L^2H^n(Y)$ is the $n^{th}$ $L^2$-\emph{Betti number}, $L^2b^n(Y;G)$.
When $Y$ is acyclic, it is an invariant of $G$, denoted $L^2b^n(G)$.
As shown in \cite{luck}, these $L^2$-Betti numbers can be computed from the cohomology groups $H^*(G;\cN(G))$ (i.e., the cohomology of $BG$ with local coefficients in $\cN(G)$).

For Coxeter groups, there is a refinement of the notion of $L^2$-Betti number.
Suppose $(W,S)$ is a Coxeter system and $\bp$ is a multiparameter of positive real numbers (meaning that $\bp$ is a function $S\to (0,\infty)$ which is constant on conjugacy classes).
There is an associated ``Hecke--von Neumann algebra,'' $\cN_\bp(W)$, which one can use to define the ``weighted $L^2$-Betti numbers,'' $\ltwop b^n(W)$, cf.~\cite{dym}, \cite{ddjo07} or \cite{dbook}.
These numbers have been calculated provided $\bp$ lies within a certain range, namely, when either $\bp$ or $\bp^\minus$ lie within the region of convergence for the growth series of $W$ (in \cite{dym}, \cite{ddjo07}).

Associated to $(W,S)$, there is a finite CW complex $X$, called its \emph{Salvetti complex}.
It is homotopy equivalent to the quotient by $W$ of the complement of the (possibly infinite) complex hyperplane arrangement associated to $(W,S)$ (cf.~\cite{cd05}).
The fundamental group of $X$ is the \emph{Artin group} $A$ associated to $(W,S)$.
The $K(\pi,1)$ Conjecture for Artin groups asserts that $X$ is a model for $BA$ ($=K(A,1)$).
This conjecture is known to hold in many cases, for example when $W$ is either right-angled or finite.

Given a simplicial graph $\gG$ with vertex set $S$ and a family of groups $\{G_s\}_{s\in S}$, their \emph{graph product}, $\prod_\gG G_s$, is the quotient of the free product of the $G_s$ by the relations that elements of $G_s$ and $G_t$ commute whenever $\{s,t\}$ is an edge of $\gG$.
Associated to $\gG$ there is a flag complex $L$ with $1$-skeleton $\gG$.
($L$ is defined by the requirement that a subset of $S$ spans a simplex of $L$ if and only if it is the vertex set of a complete subgraph of $\gG$.) A \emph{right-angled Coxeter group} $W_L$ (a RACG for short) is a graph product where each $G_s$ is cyclic of order $2$.
Similarly, a \emph{right-angled Artin group} (a RAAG for short) $A_L$ is a graph product where each $G_s$ is infinite cyclic.
An arbitrary graph product of groups is an example of a right-angled building of type $(W_L, S)$.
In \S\ref{ss:oct} we consider a family of arbitrary Coxeter systems $\{(V_s,T_s)\}_{s\in S}$.
Their graph product, $V:=\prod_\gG V_s$, gives a Coxeter system $(V,T)$ (where $T$ denotes the disjoint union of the $T_s$).

Given a right-angled Artin group $A_L$, let $\pi:A_L\to \zz$ be the homomorphism which sends each standard generator to $1$.
Put $BB_L:=\Ker \pi$.
In \cite {bb} Bestvina and Brady prove that $BB_L$ is type FP if and only if the simplicial complex $L$ is acyclic.
If this is the case, $BB_L$ is called a \emph{Bestvina--Brady group}.

A subset of $S$ (the set of generators of a Coxeter group $W$) is \emph{spherical} if it generates a finite subgroup of $W$.
Let $\cs(W,S)$ denote the poset of spherical subsets of $S$ and let $K$ be the geometric realization of this poset.
For each spherical subset $J$, let $K_J$ (resp.
$\partial K_J$) be the geometric realization of the subposet $\cs(W,S)_{\ge J}$ (resp.
$\cs(W,S)_{> J}$).
$K_J$ is the cone on $\partial K_J$.
($\partial K$ is the barycentric subdivision of the nerve of $(W,S)$; $\partial K_J$ is the barycentric subdivision of the link of the simplex corresponding to $J$ in the nerve.)

Many of the following computations are done by using a spectral sequence associated to a double complex.
The $E_\infty$ terms of such a spectral sequence compute the graded group, $\grh^*(\ )$, associated to the corresponding filtration of the cohomology group, $H^*(\ )$, in question.

In item (1) below, the Coxeter system is arbitrary while in (2), (3), (4) it is right-angled.
(Within parentheses we refer either to the theorem in this paper where the computation appears or else we give a reference to the literature.) Here are the computations.
\begin{enumerate1}
	\item Suppose $A$ is the Artin group associated to a Coxeter system $(W,S)$, $X$ is the associated Salvetti complex and $\wt{X}$ is its universal cover.
	Then
	\begin{enumerate}
		\item (Theorem~\ref{t:salvetti}).
		\[
			\grh^n(X;\zz A)=\bigoplus _{J\in \cs(W,S)} H^{n-|J|}(K_J,\partial K_J) \otimes H^{|J|}(A_J;\zz A).
		\]
		(In the case of a RAAG this formula is the main result of \cite{jm}.)
		\item (\cite[Cor.~2]{dl}).
		\[
			L^2b^n(\wt{X};A)=b^n(K,\partial K).
		\]
		(Here $b^n(Y,Z)$ is the ordinary Betti number of a pair $(Y,Z)$, i.e., $b^n(Y,Z):=\dim H^n(Y,Z; \bR)$.)
	\end{enumerate}
	When the $K(\pi,1)$ Conjecture holds for $A$, these formulas compute, \\
	$\grh^*(A;\zz A)$ and $L^2b^*(A)$, respectively.

	\item Suppose $BB_L$ is the Bestvina--Brady subgroup of a RAAG, $A_L$.
	\begin{enumerate}
		\item (Theorem~\ref{t:bb}).
		The cohomology of $BB_L$ with group ring coefficients is isomorphic to that of $A_L$ shifted up in degree by $1$:
		\[
			\grh^n(BB_L;\zz BB_L)=\bigoplus_{J\in \cs(W,S)} H^{n-|J|+1}(K_J,\partial K_J) \otimes \zz (A_L/A_J).
		\]
		\item (Theorem~\ref{t:lraag}).
		The $L^2$-Betti numbers of $BB_L$ are given by
		\[
			L^2b^n(BB_L)=\sum_{s\in S} b^n(K_s,\partial K_s),
		\]
	\end{enumerate}

	\item Suppose $G=\prod_\gG G_s$ is a graph product of groups, and let $(W,S)$ be the RACS associated to the graph.
	For each spherical subset $J$, $G_J$ denotes the direct product $\prod_{s\in J} G_s$.
	\begin{enumerate}
		\item (Theorem~\ref{t:gpfree}).
		If each $G_s$ is infinite, then
		\[
			\grh^n(G;\zz G)=\bigoplus_{\substack{J\in \cs(W,S)\\
			i+j=n}} H^i(K_J,\partial K_J; H^j(G_J;\zz G)).
		\]
		\item (Theorem~\ref{t:lgraph}).
		If each $G_s$ is infinite, then
		\[
			L^2b^n(G)=\bigoplus_{\substack{J\in \cs(W,S)\\
			i+j=n}} b^i(K_J,\partial K_J)\cdot L^2 b^j(G_J).
		\]
		\item (\cite[Corollary 9.4]{ddjmo}).
		If each $G_s$ is finite, then
		\[
			\grh^n(G;\zz G)=\bigoplus_{J\in \cs(W,S)} H^n(K,K^{S-J})\otimes \hat{A}^J,
		\]
		Here $K^{S-J}$ denotes the union of subcomplexes $K_s$, with $s\in S-J$, and $\hat{A}^J$ is a certain (free abelian) subgroup of $\zz (G/G_J)$.
		\item (\cite[Theorem 13.8]{ddjo07} and Corollary~\ref{cor:ddjofinite} below).
		Again, when each $G_s$ is finite, $G$ is a right-angled building of type $(W,S)$ and its $L^2$-Betti numbers are determined by the weighted $L^2$-Betti numbers of $(W,S)$ via the formula, $L^2b^n(G)=\ltwop b^n(W)$, where the multiparameter $\bp=(p_s)_{s\in S}$ is defined by $p_s=|G_s|-1$.
	\end{enumerate}
	\item Suppose that $V$ is a graph product of Coxeter groups $\{V_s\}_{s\in S}$, that $(W,S)$ is the RACS associated to the graph and that $\bq$ is multiparameter for $(V,T)$.
	There are two results concerning the weighted $L^2$-Betti numbers of $(V,T)$.
	\begin{enumerate}
		\item (Theorem~\ref{t:largeq}).
		Suppose $\bq$ is ``large'' in the sense that it does not lie in the region of convergence for the growth series of any component group $V_s$.
		Then
		\[
			\ltwo b^n(V)=\sum_{\substack{i+j=n\\J\in \cs}} b^i(K_J,\partial K_J)\cdot \ltwo b^j(V_J),
		\]
		\item (Theorem~\ref{c:main}).
		For $V$ as above and $\bq$ ``small'' in the sense that it lies in the convergence region of each $V_s$, then
		\[
			\ltwo b^*(V)=\ltwop b^*(W),
		\]
		where the multiparameter $\bp$ for $W$ is defined by $p_s = V_s(\bq) -1$.
	\end{enumerate}
\end{enumerate1}

The calculations in (1), (2), (3a), (3b) and (4a) follow a similar line.
They are based on the spectral sequence developed in \S\ref{s:ss}.
In all cases we are computing some type of cohomology of a CW complex $Y$, which is covered by a family of subcomplexes $\{Y_J\}_{J\in \cs(W,S)}$, indexed by $\cs(W,S)$.
For a fixed $j$, the $E^{*,j}_1$ terms of the spectral sequence form a cochain complex for a nonconstant coefficient system on $K$, which associates to a simplex $\gs$ of $K$ with minimum vertex $J$ the group $H^j(Y_J)$.
We first show that the spectral sequence decomposes at $E_2$ as a direct sum, with a component for each $J\in \cs(W,S)$, and with the $J$-component consisting of a cochain complex of the form $C^*(K_J,\partial K_J)$ with some constant system of coefficients.
We then show the spectral sequence collapses at $E_2$.

Since $\zz G$ is a $G$-bimodule, both sides of the formulas in (1a), (2a) and (3a) are right $G$-modules.
One can ask if these formulas give isomorphisms of $G$-modules.
The spectral sequence argument shows that this is indeed the case.
Moreover, since the right hand side of each formula is finitely generated as a $G$-module, so is the left hand side (cf.~\cite{ddjo2}).  In general, if we change the $\grh^n(-)$ on the left hand side to  $H^n(-)$, then these formulas do not give isomorphisms of $G$-modules.  For example, (3a) is not valid after this change in the case where $G$ is the free product of two infinite cyclic groups (cf.\cite[Example 5.2]{ddjo2}).  However, the possibility remains that after dropping the $\GR$'s, both sides of these equations are still isomorphic as $\zz$-modules, which leads us to the following question.

\begin{Question}
On the left hand sides of the the formulas in (1a), (2a) and (3a), is $\grh^n(-)$ always isomorphic to $H^*(-)$ as an abelian group?
\end{Question}

The calculations in (3c), (3d) and (4b) come from a different direction based on \cite{ddjmo} and \cite{ddjo07}.
In particular, the proof of (4b) uses an argument, similar to one in \cite{ddjo07}, relating the ordinary $L^2$-cohomology of a building to the weighted $L^2$-cohomology of its Coxeter system.

\section{Basic definitions}
\subsection{Coxeter groups, Artin groups, buildings}\label{ss:cox}
Throughout this paper, we will be given as data a simplicial graph $\gG$ with finite vertex set $S$ and edge set, $\edge(\gG)$, together with a labeling of the edges by integers $\ge 2$.
The label corresponding to $\{s,t\}\in \edge(\gG)$ is denoted by $m(s,t)$.
These data give a presentation of a \emph{Coxeter group} $W$ with generating set $S$ and relations:
\[
	s^2=1,\ (st)^{m(s,t)}=1, \ \ \text{for all } s\in S \text{ and }\{s,t\}\in \edge(\gG).
\]
The pair $(W,S)$ is a \emph{Coxeter system}; $\gG$ is its \emph{presentation graph}.
These same data determine a presentation for an \emph{Artin group} $A$ with generating set $\{a_s\}_{s\in S}$ and relations,
\[
	a_s a_t \cdots = a_t a_s \cdots, \qquad \text{for all $\{s,t\} \in \edge(\gG)$,}
\]
where there are $m(s,t)$ terms on each side of the equation.
Given a word $\bs=(s_1,\dots,s_n)$ in $S$, its \emph{value} is the element $w(\bs)$ of $W$ defined by
\[
	w(\bs):=s_1\cdots s_n.
\]
The word $\bs=(s_1,\dots,s_n)$ is a \emph{reduced expression} if $\bs$ is a word of minimum length for its value.
The pair $(W_J,J)$ is also a Coxeter system (cf.~\cite[Thm.~4.1.6]{dbook}).
For any $J\subseteq S$, the \emph{special subgroup} $W_J$ is the subgroup generated by $J$.
The subset $J$ is \emph{spherical} if $|W_J|<\infty$.
Let $\cs(W,S)$ denote the poset of spherical subsets of $S$.
The nonempty elements of $\cs(W,S)$ form an abstract simplicial complex $L(W,S)$, called the \emph{nerve} of the Coxeter system.
($\vertex(L(W,S))=S$ and a nonempty subset $J\subseteq S$ spans a simplex if and only if it is spherical.)

Given a subset $I\subseteq S$, an element $w\in W$ is \emph{$I$-reduced} if $l(sw)>l(w)$ for all $s\in I$.
For $I\subseteq J\subseteq S$, let $W^I_J$ be the subset of all $I$-reduced elements in the special subgroup $W_J$.
(For example, $W^\emptyset _J=W_J$ and $W^J_J=\{1\}$.)

A \emph{chamber system over a set $S$} is a set $\cac$ (of ``chambers'') and a family of equivalence relations $\{\sim_s\}_{s\in S}$ on $\cac$ indexed by $S$.
(There is one equivalence relation for each $s\in S$.) An $s$-equivalence class is an \emph{$s$-panel}.
Distinct chambers $C,D\in \cac$ are \emph{$s$-adjacent} if they belong to the same $s$-panel.
A \emph{gallery} in $\cac$ is a sequence $\bC=(C_0,\dots, C_n)$ of adjacent chambers.
Its \emph{type} is the word $\bs=(s_1,\dots, s_n)$ in $S$ where the $i^{th}$ letter of $\bs$ is $s_i$ if $C_{i-1}$ be $s_i$-adjacent to $C_i$.
A \emph{building of type $(W,S)$} is a chamber system $\cac$ over $S$ equipped with a function $\gd:\cac\times\cac\to W$ (called a \emph{Weyl distance}) such that
\begin{enumerate1}
	\item Each panel contains at least two elements.
	\item Given a reduced expression $\bs$ and chambers $C,D\in \cac$, there is a gallery of type $\bs$ from $C$ to $D$ if and only if $\gd(C,D)=w(\bs)$.
\end{enumerate1}
(The above definition of building is due to Ronan and Tits, a variant can be found in \cite{ab}.) The building $\cac$ is \emph{locally finite} if each panel is finite.
\begin{example*}
	A Coxeter group $W$ can be given the structure of a chamber system by declaring the $s$-panels to be the left cosets, $wW_s$, where $W_s$ ($=W_{\{s\}}$) is the cyclic group of order two generated by $s$.
	Define $\gd_W:W \times W\to W$ by $\gd(w,w')=w^\minus w'$.
	Then $(W,\gd_W)$ is a building, called the \emph{standard thin building of type $(W,S)$}.
\end{example*}

Given a building $(\cac,\gd)$ of type $(W,S)$ and a subset $J$ of $S$, the \emph{$J$-residue} containing a chamber $C$ is the subset $R_J(C)\subset \cac$ defined by
\[
	R_J(C):=\{D\in \cac\mid \gd(C,D)\in W_J\}.
\]
In other words, a $J$-residue is a ``$J$-gallery connected component of $\cac$.'' If $J=\emptyset$, then a $J$-residue is simply a chamber and if $J$ has only one element, then a $J$-residue is a panel.
In the standard thin building $(W,\gd_W)$, a $J$-residue is a left coset of $W_J$.

\subsection{Posets, simplicial complexes, flag complexes}\label{ss:poset}
Suppose $L$ is a simplicial complex with vertex set $S$ and let $\cs(L)$ denote the poset of (vertex sets of) simplices in $L$ (including the empty simplex).
If $J$ is the vertex set of a simplex $\gs$ in $L$, then denote by $\Lk(J,L)$ (or simply $\Lk(J)$ when $L$ is understood), the link of $\gs$ in $L$.
The abstract simplicial complex $\Lk(J)$ has one simplex for each element of $\cs(L)_{>J}$ ($=\{J'\in \cs(L)\mid J'\supset J\}$).

A simplicial complex $L$ is a \emph{flag complex} if any finite, nonempty set of vertices, which are pairwise connected by edges, spans a simplex of $L$.
A simplicial graph $\gG$ determines a flag complex, $L(\gG)$: its simplices are the finite, nonempty sets of vertices which are pairwise connected by edges.

Suppose $\cp$ is a poset.
There is an abstract simplicial complex $\flag(\cp)$ with vertex set $\cp$ and with simplices the totally ordered, finite, nonempty subsets of $\cp$.
We note that $\flag(\cp)$ is a flag complex.
Given a simplex $\gs\in \flag(\cp)$, its least element is its \emph{minimum vertex} and is denoted by $\min (\gs)$.
If $L$ is a simplicial complex, then $\flag (\cs(L)_{>\emptyset})$ can be identified with the barycentric subdivision of $L$.
Similarly, $\flag(\cs(L))$ is the cone on the barycentric subdivision of $L$.
(The empty set provides the cone point.)

\subsection{Davis complexes and Salvetti complexes.}\label{ss:real}
Let $M$ be a topological space.
A \emph{mirror structure} on $M$ \emph{over a set $S$} is a family of subspaces $\{M_s\}_{s\in S}$ indexed by $S$; $M_s$ is the \emph{$s$-mirror} of $M$.
If $M$ is CW complex and each $M_s$ is a subcomplex, then $M$ is \emph{mirrored CW complex}.
For each $x\in M$,
\begin{equation}\label{e:sx}
	S(x):=\{s\in S\mid x\in M_s\}.
\end{equation}
If $M$ is a mirrored CW complex and $c$ is a cell in $M$, then
\[
	S(c):=\{s\in S\mid c\subset M_s\}.
\]
Given a building $\cac$ of type $(W,S)$ and mirrored CW complex $M$ over $S$, define an equivalence relation $\sim$ on $\cac\times M$ by $(C,x)\sim (C', x')$ if and only if $x=x'$ and $C$, $C'$ belong to the same $S(x)$-residue.
Give $\cac$ the discrete topology, $\cac\times M$ the product topology, and denote the quotient space by
\begin{equation}\label{e:real}
	\cb(\cac,M):=(\cac\times M)/\sim
\end{equation}
and call it the \emph{$M$-realization} of $\cac$.
When $\cac$ is the standard thin building $W$, put
\begin{equation}\label{e:real2}
	\cu(W,M):=\cb(W,M)
\end{equation}
and call it the \emph{$M$-realization} of the Coxeter system $(W,S)$.
\begin{Remark}
	In our previous work, e.g., in \cite{dbook}, we used the notation $\cu(\ ,\ )$ to denote a topological realization of either a building or of a Coxeter system.
	However, in what follows we will study Coxeter systems $(V,T)$ which will also have the structure of a RAB over an auxiliary RACS $(W,S)$ and we will want to distinguish the two types of realizations of $(V,T)$ (either as a Coxeter system or as a building).
\end{Remark}

The mirror structure is \emph{$W$-finite} if $S(x)$ is spherical for each $x\in M$.
(In this paper we shall always assume this.) When this is the case, $\cu(W,M)$ is a locally finite complex and similarly, if $\cac$ is locally finite building, then $\cb(\cac,M)$ is a locally finite complex.

We will use the following notation for unions and intersections of mirrors, for any $J\subseteq S$, put
\begin{equation}\label{e:xj}
	M_J:=\bigcap_{s\in J} M_s,\qquad M^J:=\bigcup_{s\in J} M_s.
\end{equation}
Also, we will write $\partial M_J$ for the subset of $M_J$ consisting of all points $x \in M$ such that $S(x)$ is a proper subset of $J$.

As in \S\ref{ss:poset}, $\cs(W,S)$ (or simply $\cs$) is the poset of spherical subsets of $S$ (including the empty set).
The geometric realization of this poset is the simplicial complex $\flag(\cs(W,S))$.
We denote it by $K(W,S)$ (or simply $K$) and call it the \emph{Davis chamber}.
Most often we will want to take $M=K$.
One gets a mirror structure on $K$ by defining $K_s$ to be the geometric realization of the subposet $\cs_{\ge \{s\}}$.
Then $\cu(W,K)$ is the \emph{Davis complex} and $\cb(\cac,K)$ is the \emph{standard realization} of $\cb$.
By construction, the $W$-action on $\cu(W,K)$ is proper (i.e., each isotropy subgroup is finite) and the quotient space $K$ is a finite complex, hence, compact.
Moreover, $\cu(W,K)$ is contractible (by \cite[Thm.~8.2.13]{dbook}).
Note that for any $J\in \cs$, the subcomplex $\partial K_J$ is the barycentric subdivision $\Lk(J)$ (the link in $L$ of the simplex with vertex set $J$).
Also, $K_J$ is the cone on $\partial K_J$ (i.e., $K_J\cong \cone (\Lk(J))$).

\paragraph{The Salvetti complex.} Let $A$ be the Artin group associated to $(W,S)$ and let $q: A\rightarrow W$ denote the natural homomorphism sending $a_s$ to $s$.
There is a set-theoretic section for $q$, denoted by $w\mapsto a_w$, defined as follows: if $s_1\cdots s_n$ is any reduced expression for $w$, then $a_w := a_{s_1}\cdots a_{s_n}$.
As explained in \cite[p.
602]{cdjams}, it follows from Tits' solution to the word problem for Coxeter groups that $w\mapsto a_w$ is well-defined.

Define a partial order on $W\times \cs$ by $(w,I)<(v,J)$ if and only if $I<J$ and $v^\minus w \in W^I_J$ (where $W^I_J$ was defined in \S\ref{ss:cox}).
It is proved in \cite{cd05} that $W\times \cs$ is the poset of cells of a cell complex $X'$ on which $W$ acts freely so that each cell of $X'$ is a Coxeter cell.
($\flag (W\times \cs)$ is the barycentric subdivision of $X'$.) The quotient space $X:=X'/W$ is the \emph{Salvetti complex} of $(W,S)$.
It is known that $\pi_1(X)=A$.
The universal cover of $X$ is denoted by $\wx$.
For each $(w,J)\in W\times \cs$, the flag complex on $(W\times \cs)_{\le (w,J)}$ can be identified with the barycentric subdivision of a Coxeter cell of type $(W_J)$.
(A \emph{Coxeter cell} of type $W_J$ means the convex hull of a generic orbit in the canonical representation of $W_J$; see \cite[\S 7.3]{dbook}.) So, $X'$ (or $X$)) can be given the structure of a CW complex where the cells are Coxeter cells.
In particular, each vertex of $X'$ corresponds to an element of $W\times \cs$ of the form $(w,\emptyset)$ and each $1$-cell of $X'$ corresponds to an element of the form $(w,\{s\})$.
Orient this edge by declaring $(w,\emptyset)$ to be its initial vertex and $(ws,\emptyset)$ its terminal vertex.
Since the $W$-action preserves edge orientations, these orientations pass to the edges of $X$.
Call a vertex $x$ of a cell $C$ of $X$ a \emph{top vertex} of $C$ if each edge of $C$ which contains $x$ points away from $x$ (cf.~\cite[\S 7]{dl}).
One can then explicitly describe CW structure on $X$ as follows.
For each $J\in \cs$, take a Coxeter cell $C_J$ of type $W_J$.
Now for each $I < J$ and each $u\in W^I_J$ glue together $C_I$ and $uC_I$ via the homeomorphism induced by $u$.
The result is denoted $X_J$.
(It is the Salvetti complex for $A_J$ and therefore, a $K(A_J,1)$.) To construct $X$, start with the disjoint union of the $X_J$, for $J\in \cs$, and then use the natural maps to identify $X_I$ with a subcomplex of $X_J$ whenever $I < J$.
This description exhibits $X$ as a ``poset of spaces over $\cs$'' (as defined in \S\ref{s:ss}).
On the level of fundamental groups we know that the inclusion $X_J\to X$ induces the natural injection $A_J\to A$ and that the associated ``simple complex of groups'' is the one discussed in \cite[\S 3]{cdjams}.
Similarly, for each $J\in \cs$, let $\wx_J$ denote the inverse image of $X_J$ in $\wx$.
($\wx_J$ is a disjoint union of copies of the universal cover $X_J$, one copy for each coset of $A_J$ in $A$.) Thus, $\wx$ also has the structure of a poset of spaces over $\cs$.

In the right-angled case, the Salvetti complex has a simple description as a subcomplex of a torus (and we will denote it by $T_L$ instead of $X$).
Let $T^S$ denote the $S$-fold product of copies of $S^1$.
Define
\begin{equation}\label{e:tl}
	T_L:= \bigcup _{J\in \cs(L)} T^J.
\end{equation}
(This is a special case of the ``polyhedral product'' construction discussed in the next subsection and in \cite{bbcg,densuc}.) According to \cite{cd05}, its universal cover $\wt{T}_L$ is a $\cat(0)$-cubical complex and hence, is contractible.

\subsection{Graph products of groups and spaces}\label{ss:spaces}

As in the Introduction, $\gG$ is a simplicial graph and suppose each edge is labeled $2$.
Also, $S=\vertex(\gG)$, $L$ is the flag complex determined by $\gG$ and $(W_L,S)$ is the associated RACS. (We shall generally reserve the notation $W_L$ for the case when the Coxeter group is right-angled with nerve $L$ and similarly, $A_L$ for the RAAG associated to $L$.) Suppose $\{G_s\}_{s\in S}$ is a family of groups indexed by $S$.
The \emph{graph product} of the $G_s$, denoted $\prod_\gG G_s$, is the quotient of the free product of the $G_s$, $s\in S$, by the normal subgroup generated by all commutators of the form, $[g_s,g_t]$, where $\{s,t\}\in \edge(\gG)$, $g_s\in G_s$ and $g_t\in G_t$.
For example, if $\edge(\gG)=\emptyset$, then $\prod_\gG G_s$ is the free product, while if $\gG$ is the complete graph on $S$, then $\prod_\gG G_s$ is the direct sum.

Suppose $G=\prod_\gG G_s$.
We want to see that $G$ has the structure of a RAB of type $(W_L,S)$.
The group $G$ can be given the structure of a chamber system as follows: the $s$-panels are the left cosets $gG_s$, with $g\in G$.
Write $G_s^*$ for $G_s-\{1\}$.
The projections $G_s^* \mapsto s$ induce a map (\emph{not a homomorphism}) $\pi:G \to W_L$, as follows: any element $g \in G$ can be written as $g_{s_1}\cdots g_{s_n}$, with $g_{s_i} \in G_{s_i}^*$ so that $s_1\cdots s_n$ is a reduced expression for an element $w \in W$.
Moreover, $w$ depends only on $g$.
The map $\pi$ sends $g$ to $w$.
The Weyl distance $\gd:G\times G\to W_L$ is defined by $\gd(g,h)=\pi(g^\minus h)$.
The following lemma is easily checked.
\begin{lemma}[{\cite[Ex.~18.1.10]{dbook}}]\label{l:RAB}
	$(G,\gd)$ is a building of type $(W_L,S)$.
\end{lemma}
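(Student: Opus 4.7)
The plan is to verify the two defining axioms of a building, after first checking that the map $\pi\colon G\to W_L$, and hence the purported Weyl distance $\gd$, is well-defined. The key input is a normal form theorem for graph products (in the spirit of Green's thesis), which I would state as follows: every $g\in G$ admits a ``reduced expression'' $g=g_{s_1}\cdots g_{s_n}$ with $g_{s_i}\in G_{s_i}^*$ and with $s_1\cdots s_n$ a reduced word for some $w\in W_L$; any two such expressions for the same $g$ have the same length $n=\ell(w)$ and differ only by a finite sequence of swaps $g_{s_i}g_{s_{i+1}}=g_{s_{i+1}}g_{s_i}$ with $\{s_i,s_{i+1}\}\in\edge(\gG)$. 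By Tits' solution to the word problem for $W_L$, such swaps in $G$ correspond exactly to the only braid moves that relate reduced expressions in a right-angled Coxeter group, so setting $\pi(g):=w$ is unambiguous.

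Axiom (1) is immediate once one excludes trivial factors: the $s$-panel through $g$ is the coset $gG_s$, which has $|G_s|\ge 2$ chambers.

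For axiom (2), I would unpack the definitions. A gallery of type $\bs=(s_1,\ldots,s_n)$ from $g$ to $h$ is the same data as a factorization $g^{-1}h=k_1\cdots k_n$ with each $k_i\in G_{s_i}^*$ (non-identity, since $s$-adjacency requires distinctness). For the ``only if'' direction, if $\bs$ is reduced and such a factorization exists, then the factorization is itself a reduced normal form for $g^{-1}h$, so $\pi(g^{-1}h)=s_1\cdots s_n=w(\bs)$ and $\gd(g,h)=w(\bs)$. For the ``if'' direction, assuming $\gd(g,h)=w(\bs)$, the normal form theorem produces some reduced factorization $g^{-1}h=h_1\cdots h_n$ over a reduced word $\bt=(t_1,\ldots,t_n)$ with $w(\bt)=w(\bs)$; since any two reduced expressions for the same element of $W_L$ are related by a sequence of commutations $t_it_{i+1}=t_{i+1}t_i$ with $\{t_i,t_{i+1}\}\in\edge(\gG)$, I can apply the corresponding swaps $h_ih_{i+1}=h_{i+1}h_i$ in $G$ to transport this into a factorization over $\bs$, yielding a gallery of type $\bs$ from $g$ to $h$.

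The main obstacle is the normal form theorem itself: the subtle point is to show that the elementary moves available in $G$ (cancellation inside a single $G_s$, and commutation across an edge of $\gG$) suffice to convert any two expressions of a given $g$ into each other, and that this procedure is compatible with Tits' characterization of reduced words in $W_L$. This is a standard rewriting/confluence argument for graph products, and once in hand, both building axioms follow by the bookkeeping above.
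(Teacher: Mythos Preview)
Your proposal is correct. The paper does not actually prove this lemma: it simply says ``The following lemma is easily checked'' and cites \cite[Ex.~18.1.10]{dbook}, so there is no argument in the paper to compare against. What you have written is precisely the kind of verification one would give to fill in that citation: the normal form theorem for graph products (Green) shows $\pi$ is well-defined, and the translation between galleries of type $\bs$ and factorizations $g^{-1}h=k_1\cdots k_n$ with $k_i\in G_{s_i}^*$ reduces both directions of axiom~(2) to Tits' characterization of reduced expressions in a right-angled Coxeter group. One small point worth making explicit: axiom~(1) requires each $G_s$ to be nontrivial, which is implicit throughout the paper but not stated in the lemma.
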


\paragraph{Polyhedral products.} Suppose, for the moment, that $S$ is the vertex set of an arbitrary simplicial complex $L$.
Let $\{(Z_s, A_s)\}_{s\in S}$ be a family of pairs of spaces indexed by $S$.
For each $J \in \cs(L)$, let $Z'_J$ be the subspace of the product $\prod_{s\in S} Z_s$, consisting of all $S$-tuples $(x_s)_{s\in S}$ such that
\[
	x_s \in
	\begin{cases}
		Z_s, &\text{if $s\in J$},\\
		A_s &\text{if $s\notin J$.}
	\end{cases}
\]
The \emph{polyhedral product} of this family, denoted $\piprod_L (Z_s, A_s)$, is defined to be the following subspace of $\prod_{s\in S} Z_s$:
\begin{equation}\label{e:prod'}
	\textstyle{\piprod_L} \displaystyle{(Z_s, A_s):= \bigcup_{J\in \cs(L)} Z'_J}.
\end{equation}
(This terminology comes from \cite{bbcg}.
In \cite{densuc} it is called a ``generalized moment angle complex.'')
\begin{example}\label{ex:01}
	Suppose each $(Z_s,A_s)=([0,1],0)$.
	Then $\piprod_L ([0,1],0)$ can be identified with $\flag (\cs(L))$ in such a fashion that a standard subdivision of each cube in the polyhedral product is a subcomplex of $\flag (\cs (L))$.
	In particular, if $L$ is the nerve of a RACS, then $\piprod_L ([0,1],0)=K$, the Davis chamber from \S \ref{ss:real}.
\end{example}

\paragraph{Graph products of spaces.} We return to the assumption that $L$ is the flag complex determined by $\gG$.
\begin{example}\label{ex:cone}
	Suppose each $(Z_s, A_s)=(\cone(G_s), G_s)$ for a family of discrete groups $\{G_s\}_{s\in S}$.
	The group $G_s$ acts on $Z_s=\cone(G_s)$ and $A_s=G_s$ is an invariant subspace.
	Let $G':=\prod_{s\in S} G_s$ denote the direct product.
	Then $G'$ acts on $\prod_{s\in S} Z_s$ and $Z':=\piprod_L (\cone (G_s), G_s)$ is an invariant subspace.
	The quotient space $Z'/G'$ can be identified with the chamber $K=\piprod_L ([0,1],0)$ of the previous example.
	It is proved in \cite{d09} that the universal cover of $Z'$ is the standard realization of a RAB. In particular, the group $G$ of all lifts of elements in $G'$ is the graph product, $G=\prod_\gG G_s$.
	An explanation for this, which is different from that in \cite{d09}, is given in the following lemma.
\end{example}
\begin{lemma}\label{l:cover}
	With notation as in Example~\ref{ex:cone}, the fundamental group of $Z'=\piprod_L(\cone(G_s), G_s)$ can be identified with the kernel of the natural surjection $G=\prod_\gG G_s \to G' =\prod_{s\in S} G_s$.
	Moreover, if $Z\to Z'$ is the corresponding covering space, then the $G'$-action on $Z'$ lifts to a $G$-action on $Z$.
\end{lemma}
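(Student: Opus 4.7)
The plan is to identify $Z$ with the universal cover of $Z'$, with deck transformation group $N:=\Ker(\phi)$, where $\phi: G\to G'$ denotes the natural surjection. Set $Z:=\cb(G,K)$, the standard realization of the RAB furnished by Lemma~\ref{l:RAB}. By \cite[\S 18.3]{dbook}, $Z$ admits a complete $\cat(0)$ metric, hence is contractible and in particular simply connected.

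The key step is to present both $Z$ and $Z'$ as quotients of an obvious product. By definition, $Z=(G\times K)/\!\sim_Z$, where $(g,\bar x)\sim_Z(g',\bar x)$ iff $g^\minus g'$ lies in the subgroup $G_{S(\bar x)}\subset G$ generated by $\{G_s\}_{s\in S(\bar x)}$. For $Z'$, I will use the identification $K=\piprod_L([0,1],0)$ of Example~\ref{ex:01}, together with the continuous section $\iota:K\to Z'$ sending $(t_s)\in K$ to the tuple whose $s$-th coordinate is $[1_{G_s},t_s]\in\cone(G_s)$. The stabilizer in $G'$ of $\iota(\bar x)$ is exactly the direct-product subgroup $G_{S(\bar x)}\subset G'$, so the orbit map presents $Z'=(G'\times K)/\!\sim_{Z'}$ with the analogous equivalence defined inside $G'$.

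Because each spherical $G_J$ is a direct product $\prod_{s\in J}G_s$, the restriction $\phi|_{G_J}$ is the canonical inclusion into $G'$, in particular injective. Thus $\phi\times\mathrm{id}$ descends to a continuous, $\phi$-equivariant surjection $\bar\phi:Z\to Z'$, whose fibers are the $N$-orbits on $Z$ (since $\phi^\minus(G_{S(\bar x)})=G_{S(\bar x)}\cdot N$ in $G$). Moreover, $N$ acts freely on $Z$: the isotropy at $[g,\bar x]_Z$ is $gG_{S(\bar x)}g^\minus$, and by normality of $N$ combined with injectivity of $\phi|_{G_{S(\bar x)}}$, this subgroup intersects $N$ trivially.

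The remaining and subtlest point is to check that $\bar\phi$ is a local homeomorphism, hence a genuine covering map, not merely a continuous bijection whose fibers are $N$-orbits. This reduces to matching the natural polyhedral cell structures on $Z$ and $Z'$, both indexed by pairs $(g,J)\in G\times\cs(L)$ (resp.~$(g',J)\in G'\times\cs(L)$), with respect to which $\bar\phi$ is cellular. Once this is granted, $\bar\phi$ is a regular covering with deck group $N$; since $Z$ is simply connected, it is the universal cover of $Z'$, giving $\pi_1(Z')=N$ together with the asserted lift of the $G'$-action to a $G$-action on $Z$.
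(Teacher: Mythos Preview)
Your argument is correct but takes a genuinely different route from the paper's. The paper proceeds via Bass--Serre theory: in the special case where $S=\{s,t\}$ with no edge, $Z'$ is the join $G_s\odot G_t$ (a complete bipartite graph), its universal cover is the Bass--Serre tree for $G_s*G_t$, and the group of lifts is the free product; the general case is then reduced to this by passing to the $2$-skeleton. You instead construct the universal cover directly as the standard realization $\cb(G,K)$ of the building furnished by Lemma~\ref{l:RAB}, invoking the $\cat(0)$ metric on right-angled buildings for simple connectivity, and exhibit the covering projection by comparing the chamber-by-chamber descriptions $Z=(G\times K)/\!\sim$ and $Z'=(G'\times K)/\!\sim$. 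This is essentially the \cite{d09} argument that Example~\ref{ex:cone} says the lemma is meant to replace. The paper's approach is more elementary (no building machinery, no $\cat(0)$ geometry), while yours is more structural and has the virtue of identifying $Z$ explicitly. One point you flag but leave unverified is that $\bar\phi$ is a local homeomorphism; this follows because for $\bar x\in K$ with $S(\bar x)=J$, a neighborhood of $[g,\bar x]$ in $Z$ and of $[\phi(g),\bar x]$ in $Z'$ are each modeled on $(G_J\times V)/\!\sim$ for a small $V\ni\bar x$ in $K$, and $\phi|_{G_J}$ is the identity under the canonical identification of $G_J\subset G$ with $G_J\subset G'$.
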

\begin{proof}
	First consider the special case where $S$ consists of two elements $s$ and $t$ and $\gG$ has no edges.
	Then $\cone (G_{s})\times \cone (G_{t})$ is a $2$-complex and the polyhedral product $Z'$ is the union of $1$-cells which do not contain the product of the cone points.
	Such a $1$-cell either has the form $\cone (g_s)\times g_t$ or $g_s\times \cone (g_t)$ for some $(g_s, g_t)\in G_s\times G_t$.
	These two $1$-cells fit together to give a single edge $e(g_s,g_t):=(\cone (g_s)\times g_t) \cup (g_s \times\cone(g_t))$ connecting $g_t$ to $g_s$.
	In this way we see that $Z'$ is identified with (the barycentric subdivision of) the join of $G_s$ and $G_t$, which we denote $G_s\odot G_t$.
	The group $G_s\times G_t$ acts on $G_s\odot G_t$ and the vertex stabilizers are either $G_s$ or $G_t$.
	The universal cover of $G_s\odot G_t$ is a tree $T$.
	The group of all lifts of the $(G_s\times G_t)$-action is transitive on edges, and the quotient space is a single edge (with distinct vertices).
	Hence, the group of lifts is the free product $G_s*G_t$ and $T$ is the corresponding Bass--Serre tree.

	The general case follows in the same manner by considering the universal cover of the $2$-skeleton of $Z'$.
\end{proof}

Next suppose that for each $s\in S$ we are given a path connected $G_s$-space $Z_s$ and a basepoint $b_s\in Z_s$ lying in some free orbit.
We can find a $G_s$-equivariant map of pairs $f_s:(Z_s, G_sb_s)\to (\cone (G_s), G_s)$.
We want to define a space $\prod_\gG (Z_s, G_s b_s)$, together with a $G$-action on it (where $G:=\prod_\gG G_s$).
It will be called the \emph{graph product} of the $(Z_s,G_sb_s)$.
The $f_s$ induce a map, well-defined up to $G'$-equivariant homotopy, $f:\piprod_L(Z_s, G_sb_s) \to \piprod_L (\cone (G_s), G_s)$.
It is easy to see that $f$ induces a surjection on fundamental groups.
Pulling back the universal cover of $\piprod_L (\cone(G_s), G_s)$, we get a covering space, $Z\to \piprod_L (Z_s, G_sb_s)$.
We use the notation $\prod_\gG (Z_s, G_s b_s) :=Z$ for this covering space.
Notice that the $G'$-action on $\piprod_L (Z_s, G_sb_s)$ lifts to a $G$-action on $Z$.
Also, notice that if each $Z_s$ is simply connected, then $Z$ is just the universal cover of the polyhedral product.
\begin{example}\label{ex:spaces}
	Suppose $\{(B_s,p_s)\}_{s\in S}$ is a collection of path connected spaces with base points and that $B=\piprod_L (B_s,p_s)$ is the polyhedral product.
	For each $s\in S$, let $\pi_s:Z_s\to B_s$ be the universal cover and let $G_s=\pi_1(B_s, p_s)$.
	Let $Z'$ denote the polyhedral product $\piprod_L(Z_s,\pi_s^\minus (p_s))$.
	Then $Z'\to B$ is a regular covering space with group of deck transformations $G'$ (the product of the $G_s$).
	It follows that the universal cover of $Z'$ can be identified with the graph product of the $(Z_s,\pi_s^\minus(p_s))$.
	Hence, $\pi_1(B)$ is the graph product of the $G_s$.
\end{example}
\begin{example}\label{ex:EG}
	Suppose $Z_s=EG_s$, the universal cover of the classifying space $BG_s$.
	A simple argument using induction on the number of elements of $S$ (cf.~\cite[Remark p.~619]{cdjams}) shows that the polyhedral product of the $(BG_s,p_s)$ is aspherical; hence, it is a model for $BG$ and its universal cover $\prod_\gG (EG_s, G_sb_s)$ is $EG$.
\end{example}
\begin{lemma}\label{l:spaces}
	(i) If each $G_s$ acts properly on $Z_s$, then $G$ acts properly on the graph product $\prod_\gG(Z_s,G_sb_s)$.

	(ii) If each $Z_s$ is acyclic, then so is $\prod_\gG(Z_s,G_sb_s)$.
\end{lemma}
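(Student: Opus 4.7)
I would prove (i) via the covering $Z\to Z':=\piprod_L(Z_s,G_sb_s)$ from the construction preceding the lemma. For $\bar z\in Z'$, let $J$ be the minimal spherical subset such that $\bar z_s\in Z_s$ for $s\in J$ and $\bar z_s\in G_sb_s$ for $s\notin J$. The $G'$-stabilizer of $\bar z$ factors as $\prod_{s\in J}(G_s)_{\bar z_s}\times\prod_{s\notin J}(G_s)_{\bar z_s}$: each factor with $s\in J$ is finite by the properness hypothesis on $Z_s$, while each factor with $s\notin J$ is trivial because $b_s$ lies in a free $G_s$-orbit. Since $J$ is a (finite) simplex of $L$, $G'_{\bar z}$ is finite. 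By Lemma~\ref{l:cover} applied to $\piprod_L(\cone G_s,G_s)$, together with the fact that $f:Z'\to\piprod_L(\cone G_s,G_s)$ is $\pi_1$-surjective, the pullback $Z\to Z'$ is a connected regular cover with deck group $K_0:=\ker(G\to G')$ acting freely. Hence $G_z\cap K_0=\{1\}$, so $G_z$ embeds in the finite group $G'_{\bar z}$ and is therefore finite.

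For (ii) I would argue by induction on $|S|$; the case $|S|=1$ is immediate since $Z=Z_s$. For the inductive step fix $s\in S$ and let $L_s\subset S\setminus\{s\}$ denote the set of neighbors of $s$ in $\gG$. The defining relations of the graph product yield the amalgamated splitting
\[
G=G_{S\setminus\{s\}}\ast_{G_{L_s}}\bigl(G_s\times G_{L_s}\bigr).
\]
I claim $Z$ decomposes $G$-equivariantly as a tree of spaces over the associated Bass--Serre tree $T$, with vertex spaces (up to $G$-translation) $Y_1:=\prod_{\gG|_{S\setminus\{s\}}}(Z_t,G_tb_t)$ and $Y_2:=Z_s\times\prod_{\gG|_{L_s}}(Z_t,G_tb_t)$, and edge spaces $Y_0:=\prod_{\gG|_{L_s}}(Z_t,G_tb_t)$ glued in by the natural inclusions. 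Granting this, $Y_0$ and $Y_1$ are acyclic by induction and $Y_2$ is acyclic as a product of acyclic spaces; the Mayer--Vietoris spectral sequence for the cover of $Z$ by preimages of open stars of vertices of $T$ then collapses, giving $H^*(Z)\cong H^*(T)\cong H^*(\mathrm{pt})$.

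The one nontrivial step is verifying the tree-of-spaces description of $Z$. I would first check it in the cone case $(Z_s,A_s)=(\cone G_s,G_s)$, where by Example~\ref{ex:cone} the space $Z$ is the standard realization $\cb(G,K)$: this realization splits naturally over $T$ because the displayed amalgamation is visual, its factors being the standard parabolic subgroups of $G$ on $S\setminus\{s\}$ and on $\{s\}\cup L_s$. The general case would then follow by pulling this decomposition back through $f$ and using the $G_s$-equivariant maps $f_s:(Z_s,G_sb_s)\to(\cone G_s,G_s)$; with the decomposition in hand, the remaining Mayer--Vietoris calculation is routine.
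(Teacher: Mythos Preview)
Your argument for (i) is correct and is essentially an explicit version of what the paper dismisses as ``trivial.''

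For (ii) your route is genuinely different from the paper's. The paper does not induct on $|S|$ or invoke Bass--Serre theory at all. Instead it covers $Z$ by the components of the inverse images of the subspaces $Z_J=\prod_{s\in J}Z_s$ for $J\in\cs(L)$; each such component maps homeomorphically to $Z_J$, which is acyclic by the K\"unneth formula. The analogous cover of $\prod_\gG(\cone G_s,G_s)$ by inverse images of the $\cone(G_s)^J$ has the \emph{same nerve}, so the two total spaces have the same homology; since the cone version is the standard realization of a building and hence contractible, $Z$ is acyclic. This nerve-comparison argument is short, non-inductive, and dovetails with the poset-of-spaces machinery used throughout the paper. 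Your amalgam/tree-of-spaces approach is perfectly viable and highlights the recursive structure of graph products, but it costs you the verification you flag as ``the one nontrivial step''---checking that $Z$ really decomposes over the Bass--Serre tree with the stated vertex and edge spaces (and that the edge inclusions are what you want, e.g.\ into $Y_2$ along a basepoint orbit in the $Z_s$-factor). That verification can be carried out along the lines you sketch, but the paper's nerve argument sidesteps it entirely.
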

\begin{proof}
	The proof of (i) is trivial.
	To prove (ii), consider the cover of $Z$ ($:=\prod_\gG(Z_s,G_sb_s)$) by components of the inverse images of the $\{Z_J\}_{J\in \cs(L)}$.
	By the K\"unneth Formula, each $Z_J$ is acyclic and the same is true for each component of its inverse image (since such a component projects homeomorphically).
	There is a similar cover of $\prod_\gG (\cone (G_s), G_s)$ with the same nerve.
	So, $Z$ and $\prod_\gG (\cone (G_s), G_s)$ have the same homology.
	Since $\prod_\gG (\cone (G_s), G_s)$ is the standard realization of a building, it is contractible; hence, acyclic.
	Statement (ii) follows.
\end{proof}
\begin{Remark}
	Probably the correct level of generality at which to define the graph product of a family of pairs of spaces is the following.
	Suppose we are given a family of pairs $\{(Z_s,A_s)\}_{s\in S}$, where each $Z_s$ is path connected and where $A_s$ is a not necessarily connected, closed nonempty subspace.
	Then $\prod_\gG (Z_s, A_s)$ can be defined in the following manner.
	First notice that Example~\ref{ex:cone} works when the discrete groups $G_s$ are replaced by discrete spaces $D_s$.
	Let $H$ denote the fundamental group of $\piprod_L(\cone (D_s), D_s)$.
	If $D_s=\pi_0(A_s)$ denotes the set of components of $A_s$, then, as before, we have maps $f_s:(Z_s,A_s) \to (\cone (D_s),D_s)$.
	The $f_s$ induce a map $f:\piprod_L (Z_s, A_s) \to \piprod_L (\cone (D_s), D_s)$.
	Moreover, the induced map of fundamental groups $f_*:\pi_1(\piprod_L (Z_s, A_s))\to H$ is surjective.
	The corresponding covering space $Z$ is called the \emph{graph product} of the $(Z_s, A_s)$ and is denoted by $\prod_\gG (Z_s,A_s)$.
	(Notice that if each $A_s$ is connected, then the graph product is the polyhedral product, $\piprod_L (Z_s,A_s)$.) In particular, this allows us to deal with the case where each $A_s$ is a $G_s$-orbit (not necessarily a free $G_s$-orbit).
	So, suppose $A_s= G_s/H_s$.
	Then the group of lifts of the $G'$-action to the universal cover of $\piprod_L(\cone(G_s/H_s), G_s/H_s)$ is the ``graph product of pairs,'' $\prod_\gG (G_s, H_s)$, defined previously in \cite{js}.
\end{Remark}

\section{A spectral sequence}\label{s:ss}
A \emph{poset of coefficients} is a contravariant functor $\ca$ from a poset $\cp$ to the category of abelian groups (i.e., it is a collection $\{\ca_a\}_{a\in \cp}$ of abelian groups together with homomorphisms $\gf_{ba}:\ca_a\to \ca_b$, defined whenever $a>b$, such that $\gf_{ca}=\gf_{cb}\,\gf_{ba}$, whenever $a>b>c$).
The functor $\ca$ also gives us a system of coefficients on the cell complex $\flag (\cp)$: it associates to the simplex $\gs$ the abelian group $\ca_{\min(\gs)}$.
Hence, we get a cochain complex
\begin{equation}\label{e:posetcochain}
	C^j(\flag(\cp);\ca):=\bigoplus_{\gs\in \flag(\cp)^{(j)}} \ca_{\min(\gs)},
\end{equation}
where $\flag(\cp)^{(j)}$ means the set of $j$-simplices in $\flag(\cp)$.

Given a CW complex $Y$, a \emph{poset of spaces} in $Y$ over $\cp$ is a cover $\cv=\{Y_a\}_{a\in \cp}$ of $Y$ by subcomplexes indexed by $\cp$ so that if $N(\cv)$ denotes the nerve of the cover, then
\begin{enumeratei}
	\item $a<b$ $\implies$ $Y_a \subset Y_b$, and
	\item the vertex set $\vertex(\gs)$ of each simplex of $N(\cv)$ has the greatest lower bound $\wedge \gs$ in $\cp$, and
	\item $\cv$ is closed under taking finite nonempty intersections, i.e., for any simplex $\gs$ of $N(\cv)$,
	\[
		\bigcap_{a\in \gs} Y_a= Y_{\wedge\gs}.
	\]
\end{enumeratei}
\begin{Remark}
	Any cover leads to a poset of spaces by taking all nonempty intersections as elements of new cover, and removing duplicates.
	The resulting poset is the \emph{set} of all nonempty intersections, ordered by inclusion.
\end{Remark}
\begin{lemma}[cf.~\cite{haefliger92}]\label{l:ss}
	Suppose $\cv=\{Y_a\}_{a\in \cp}$ is a poset of spaces for $Y$ over $\cp$.
	There is a Mayer--Vietoris type spectral sequence converging to $H^*(Y)$ with $E _1$-term:
	\[
		E^{i,j}_1= C^i(\flag(\cp); \ch^j(\cv)),
	\]
	and $E_2$-term:
	\[
		E^{i,j}_2= H^i(\flag(\cp); \ch^j(\cv)),
	\]
	where the coefficient system $\ch^j(\cv)$ is given by $\ch^j(\cv)(\gs)=H^j(Y_{\min(\gs)})$.
\end{lemma}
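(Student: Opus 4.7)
My plan is to build the Mayer--Vietoris double complex
\[
E_0^{i,j} = \bigoplus_{\gs \in \flag(\cp)^{(i)}} C^j(Y_{\min(\gs)}),
\]
equipped with vertical differential $d_v$ given by the cellular coboundary on each $C^*(Y_{\min(\gs)})$, and horizontal differential $d_h = \sum_k (-1)^k \gr_k$, where $\gr_k$ is induced by the inclusion $Y_{\min(\gs)} \hookrightarrow Y_{\min(d_k\gs)}$ (which is the identity when the omitted vertex is not the minimum of $\gs$, and comes from condition (i) otherwise). The two standard filtrations of the total complex give two spectral sequences, both converging to $H^*(\operatorname{Tot}(E_0^{*,*}))$.

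Taking $d_v$-cohomology first yields
\[
E_1^{i,j} = \bigoplus_{\gs \in \flag(\cp)^{(i)}} H^j(Y_{\min(\gs)}) = C^i(\flag(\cp); \ch^j(\cv)),
\]
and the induced horizontal differential on $E_1$ is by construction the coboundary operator of this cochain complex with local coefficients in $\ch^j(\cv)$, so $E_2^{i,j} = H^i(\flag(\cp); \ch^j(\cv))$, as required.

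The main obstacle is to show that the total complex actually computes $H^*(Y)$; for this I filter by rows and prove the resulting spectral sequence collapses on the zeroth column. Fix a cell $c$ of $Y$ and set $\cp_c := \{a \in \cp : c \subset Y_a\}$. By (i) this subset is upward closed, and condition (iii) implies it is closed under the meet of any finite subfamily (since if $\{a_1,\dots,a_n\} \subset \cp_c$ has meet $\wedge$, then $c \in \bigcap_k Y_{a_k} = Y_{\wedge}$). Hence $\cp_c$ has a unique minimum element $a_c$. The dual cochain $c^*$ contributes to the summand at $\gs$ precisely when $\min(\gs) \ge a_c$, so the $c^*$-piece of the horizontal row $E_0^{*,j}$ is identified, as a complex, with the $\zz$-valued simplicial cochain complex of the full subcomplex $\flag(\cp_{\ge a_c}) \subseteq \flag(\cp)$. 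But $\flag(\cp_{\ge a_c})$ is the simplicial cone over $\flag(\cp_{> a_c})$ with apex $a_c$, hence contractible. Summing over cells, the row-filtered $E_1$-page therefore satisfies $\bar E_1^{i,j} = 0$ for $i > 0$ and $\bar E_1^{0,j} = C^j(Y)$ with $d_1$ the cellular coboundary, giving $\bar E_2^{0,j} = H^j(Y)$ and collapse. It follows that $H^*(\operatorname{Tot}(E_0^{*,*})) = H^*(Y)$, and the lemma is proved.
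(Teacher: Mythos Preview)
Your proof is correct and follows essentially the same approach as the paper: both set up the identical double complex $E_0^{i,j}=\bigoplus_{\dim\gs=i}C^j(Y_{\min(\gs)})$ and compare the two associated spectral sequences. The only difference is that where the paper dismisses the row-exactness with ``the usual inclusion--exclusion argument using properties (i)--(iii),'' you actually spell it out cell by cell, identifying the $c^*$-component of each row with $C^*(\flag(\cp_{\ge a_c});\zz)$ and invoking the cone structure; this is exactly the content of that inclusion--exclusion step, just made explicit.
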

\begin{proof}
	We follow the line laid down in \cite[Ch.~VII, \S 3,4]{brown}.
	Consider the following double complex:
	\begin{equation}\label{e:e0}
		E_0^{i,j}=\bigoplus_{\substack{\gs \in \flag(\cp)\\
		\dim \gs =i}} C^j(Y_{\min(\gs)}),
	\end{equation}
	where the differentials are defined as follows.
	The vertical differentials are direct sums of the differentials $d: C^j(Y_{\min(\gs)}) \to C^{j+1}(Y_{\min(\gs)})$.
	Similarly, the horizontal differentials are direct sums of homomorphisms $\gd: C^j(Y_{\min(\gs)}) \to C^j(Y_{\min(\gt)})$ where the matrix entry corresponding to $\gs \gt$ is $[\gt:\gs]i_{\gt\gs}$, where $[\gt:\gs]$ is the incidence number and $i_{\gt\gs}: C^j(Y_{\min(\gs)}) \to C^j(Y_{\min(\gt)})$ is the restriction map if $\gt$ is a coface of $\gs$ and $0$ otherwise.
	As in \cite[p.165]{brown}, there are two spectral sequences associated to the double complex.
	The first begins by taking vertical cohomology to get $E_1$ and then takes horizontal cohomology to get $E_2$.
	The differential on the $E_r$ sheet has bidegree $(r,-r+1)$.
	The second spectral sequence begins with the horizontal differential so that the differential on the $E_r$ sheet has bidegree $(-r+1,r)$

	The usual inclusion-exclusion argument using properties (i)--(iii) of a poset of spaces shows that the rows of the double complex are exact, except when $i=0$, where we get $C^j(Y)$ as the $E^{0,j}_1$-term of the second spectral sequence.
	This implies that the second spectral sequence collapses at $E_2$ and that the cohomology of the double complex is $H^*(Y)$ (cf.~the exercise in \cite[p.165]{brown}).

	We can rewrite \eqref{e:e0} as $E^{i,j}_0=C^i(\flag(\cp); \cac^j(\cv))$, where the coefficient systems are defined by $\cac^j(\cv)(\gs)=C^j(Y_{\min(\gs)})$.
	So, the first spectral sequence is the one claimed in the lemma.
\end{proof}

For $a\in \cp$, let $Y_{<a}:= \bigcup_{b<a} Y_b$.
For any maximal element $a \in \cp$, put $Y_{\neq a}:= \bigcup_{b\neq a} Y_b$.
Consider the following two conditions on the poset of spaces.
\begin{itemize}
	\item[(Z$'$)] For any $a,b\in \cp$ with $b<a$, the induced homomorphism $H^*(Y_a)\to H^*(Y_b)$ is the zero map.
	\item[(Z)] For any $a\in \cp$, the induced homomorphism $H^*(Y_a) \to H^*(Y_{<a})$ is the zero map.
\end{itemize}
Note that (Z) implies (Z$'$) since the map $H^*(Y_a)\to H^*(Y_b)$ factors through $H^*(Y_{<a})$.
\begin{lemma}\label{l:main}
	Suppose $\cv=\{Y_a\}_{a\in \cp}$ is a poset of spaces for $Y$ over $\cp$.
	\begin{enumeratei}
		\item\label{l:mainii}
		If (Z\,$'$) holds, then
		\[
			E^{i,j}_2=\bigoplus_{a\in \cp} H^i(\flag(\cp_{\ge a}),\flag(\cp_{>a}); H^j(Y_a)).
		\]

		\item\label{l:mainiii}
		If (Z) holds, then the spectral sequence degenerates at $E_2$ and
		\[
			\grh^*(Y)=\bigoplus_{a\in \cp} H^i(\flag(\cp_{\ge a}), \flag(\cp_{> a}); H^j(Y_a)).
		\]
	\end{enumeratei}
\end{lemma}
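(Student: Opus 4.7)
My plan is to analyze the $E_1$-cochain complex from Lemma~\ref{l:ss} directly, exploiting (Z$'$) to decompose it for (i), then using the stronger (Z) to establish degeneration for (ii).

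For (i), the key observation is that for any coface $\gt$ of $\gs$ in $\flag(\cp)$ one has $\min(\gt)\le \min(\gs)$, with equality exactly when the newly added vertex is not less than $\min(\gs)$. The restriction $H^j(Y_{\min(\gs)})\to H^j(Y_{\min(\gt)})$ is induced by the inclusion $Y_{\min(\gt)}\subset Y_{\min(\gs)}$; when $\min(\gt)=\min(\gs)$ this is the identity, and when $\min(\gt)<\min(\gs)$ it vanishes by (Z$'$). Consequently the $E_1$-differential preserves the value of $\min(\gs)$, so the cochain complex $C^*(\flag(\cp);\ch^j(\cv))$ splits as a direct sum indexed by $a\in\cp$. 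The summand at $a$ is the complex of functions on $i$-simplices $\gs$ with $\min(\gs)=a$; these are exactly the $i$-simplices of $\flag(\cp_{\ge a})$ containing $a$, so the summand is naturally identified with $C^*(\flag(\cp_{\ge a}),\flag(\cp_{>a});H^j(Y_a))$ with constant coefficients. Taking cohomology yields (i).

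For (ii), since (Z)$\Rightarrow$(Z$'$), part (i) already computes $E_2$, and I need only establish degeneration. My plan is to upgrade the decomposition above to the cochain level. Hypothesis (Z) lets me choose, for each $a$ and each class $\alpha\in H^j(Y_a)$, a cocycle representative whose restriction to $Y_{<a}$ is identically zero: any representative differs from one with vanishing restriction by the coboundary of a chosen primitive extended over $Y_a$. These rigidified cocycles give a splitting $H^*(Y_a)\hookrightarrow H^*(Y_a,Y_{<a})$ of the canonical surjection. I would then assemble them, proceeding by induction along a linear extension of $\cp$, into a filtration-preserving quasi-isomorphism from the $E_2$-complex of (i) onto $C^*(Y)$. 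Degeneration then follows because the source has trivially degenerate spectral sequence.

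The main obstacle will be carrying out the inductive assembly coherently: a cocycle representative chosen at $a$ must be modifiable by coboundaries supported on $Y_{<a}$ without disturbing the splittings at elements $a'>a$ that have already been treated. Should this direct construction prove too delicate, I would fall back on an induction on $|\cp|$ via Mayer--Vietoris: for a maximal $a\in\cp$, write $Y=Y_a\cup Y_{\ne a}$ with $Y_a\cap Y_{\ne a}=Y_{<a}$; hypothesis (Z) annihilates the restriction $H^*(Y_a)\to H^*(Y_{<a})$ in the Mayer--Vietoris sequence, which combined with the inductive decompositions of $\grh^*(Y_{\ne a})$ and $\grh^*(Y_{<a})$ should identify $\grh^*(Y)$ with the claimed direct sum.
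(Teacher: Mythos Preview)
Your argument for (i) is correct and matches the paper's exactly: the decomposition of $E_1$ by the value of $\min(\gs)$, with (Z$'$) killing the cross-terms in the horizontal differential.

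For (ii), your \emph{fallback} is essentially what the paper does. The paper works at the level of the double complex: for a maximal $a\in\cp$ it uses the short exact sequence of double complexes
\[
0\to E_{0,a}\to E_{0,\le a}\to E_{0,<a}\to 0
\]
coming from the excision $(Y,Y_{\ne a})\simeq (Y_a,Y_{<a})$, notes that $E_{0,\le a}$ degenerates trivially (the cone $\flag([b,a])$ kills everything except $b=a$, $i=0$), applies induction to $E_{0,<a}$, and uses (Z) to split the long exact sequence. This is your Mayer--Vietoris idea, executed inside the spectral sequence rather than on $H^*(Y)$ directly. The paper also has a preliminary Case~1: if $\gs$ has a coface $\gt$ with $\min(\gt)=\min(\gs)$, the zig-zag for the higher differential can be chosen to terminate immediately, so only the ``maximal'' simplices need the inductive argument.

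Your \emph{primary} approach---choosing cocycle representatives in $H^*(Y_a)$ that vanish on $Y_{<a}$ and assembling them along a linear extension of $\cp$---is a genuinely different strategy, and the obstacle you flag is real: the splittings $H^*(Y_a)\hookrightarrow H^*(Y_a,Y_{<a})$ are noncanonical, and arranging compatibility across the poset is exactly the coherence problem that the double-complex argument sidesteps. If it could be made to work it would yield a slightly stronger statement (an actual splitting of $H^*(Y)$, not just of $\grh^*(Y)$), but the paper's Question in the introduction suggests this is not expected to hold in general as modules, so one should be cautious. The fallback is the safe route; promote it to your main argument.
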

\begin{proof}
	We use the double complex from the proof of Lemma~\ref{l:ss}.
	The cochain group decomposes as a direct sum:
	\[
		C^i(\flag(\cp); \cac^j(\cv))= \bigoplus_{a\in \cp} C^i(\flag(\cp_{\ge a}),\flag(\cp_{> a}); C^j(Y_a)).
	\]
	The vertical differentials at $E_0$ respect this decomposition, so at $E_1$ the spectral sequence always decomposes as a direct sum:
	\[
		E_1^{i,j}=\bigoplus_{a\in \cp} C^i(\flag(\cp_{\ge a}),\flag(\cp_{> a}); H^j(Y_a)).
	\]
	In general, the differentials at $E_1$ do not respect this decomposition; however, condition (Z$'$) implies that they do, and therefore, the spectral sequence also decomposes at $E_2$:
	\begin{equation}\label{e:e2}
		E_2^{i,j}=\bigoplus_{a\in \cp} H^i(\flag(\cp_{\ge a}),\flag(\cp_{> a}); H^j (Y_a)).
	\end{equation}
	This proves (\ref{l:mainii}).

	Now suppose (Z) holds.
	By induction, we can assume that (\ref{l:mainiii}) is true for all posets with fewer elements.
	If $z \in E_0^{i,j}$ is a vertical cocycle, then its higher differential is given by $d_r(z)=\gd(x_r)$, where $(x_0=z,x_1, \dots , x_r)$ is any sequence of elements satisfying $x_k \in E_0^{i+k,j-k}$ and $\gd(x_k)=d(x_{k+1})$.
	Since the columns of double complex split as direct sums over $\gs$, the vertical cocycles split as a direct sum, and it suffices to show that higher differentials vanish for each summand.
	So, let $\gs$ be a simplex in $\flag(\cp)$, and consider the term $C^j(Y_{\min(\gs)})$.
	There are two cases.

	1) $\gs$ is a face of a simplex $\gt$ with $\min(\gt)=\min(\gs)$.
	Then $i_{\gt\gs}$ is the identity map and this forces higher differentials to be trivial on this term.
	Indeed, if $z\in C^j(Y_{\min(\gs)})$, then $d(x_1)=\gd(z)$, and therefore $d(x_{1\gt})=\pm z$, where $x_{1\gt}$ denotes the $\gt$ component of $x_1$.
	Thus we can choose $x_1'=\pm\gd(i_{\gt\gs}^{-1}(x_{1\gt}))$ and all higher $x_k=0$.

	2) $\gs$ is a ``maximal'' face, i.e., all its cofaces have strictly smaller minimum vertices.
	Then $a=\max(\gs)$ is a maximal element of $\cp$.
	The cover of $Y_{\neq a}$ by $\{Y_b \mid b\neq a\}$ is a poset of spaces over $\cp_{\neq a}$.
	Let $E_{0,a}$ be the subcomplex of $E_0$ corresponding to the pair $(Y, Y_{\neq a})$:
	\[
		E_{0,a}^{i,j}=\bigoplus_{\substack{\max(\gt)=a \\
		\dim \gt =i}} C^j(Y_{\min(\gt)}).
	\]
	Note that $C^j(Y_\gs)$ is contained in this subcomplex, so it suffices to show that the higher differentials vanish for $E_{0,a}$.
	The pair $(Y, Y_{\neq a})$ excises to the pair $(Y_a, Y_{<a})$.
	So we have a short exact sequence:
	\[
		0 \to E_{0,a} \to E_{0,\le a} \to E_{0,< a} \to 0\,,
	\]
	where
	\[
		E_{0,\le a}^{i,j}=\bigoplus_{\substack{\gt \in \flag(\cp_{\le a})\\
		\dim \gt =i}} C^j(Y_{\min(\gt)}),
	\]
	and
	\[
		E_{0,< a}^{i,j}=\bigoplus_{\substack{\gt \in \flag(\cp_{< a})\\
		\dim \gt =i}} C^j(Y_{\min(\gt)}).
	\]
	are double complexes corresponding to the covers (posets of spaces) of $Y_a$ by $\{Y_b \mid b\le a\}$ and of $Y_{<a}$ by $\{Y_b \mid b< a\}$.

	The $E_2$ terms of the spectral sequences $E_{\le a}$ and $E_{< a}$ are
	\[
		E_{2, \le a}^{i,j}=\bigoplus_{b\le a} H^i(\flag(\cp{[b,a]}),\flag(\cp{(b,a]}); H^j(Y_b)),
	\]
	and
	\[
		E_{2, < a}^{i,j}=\bigoplus_{b< a} H^i(\flag(\cp{[b,a)}),\flag(\cp{(b,a)}); H^j(Y_b)).
	\]
	For $b<a$, $\flag([b,a])$ is a cone on $\flag((b,a])$; therefore, the only nonzero terms in $E_{2, \le a}$ come from $b=a$ and $i=0$, i.e., $E_{2, \le a}$ has $H^j(Y)$ in the 0-row and 0 everywhere else.
	In particular, it collapses at $E_2$.
	By inductive assumption $E_{2, < a}$ also collapses at $E_2$.
	Since, by hypothesis, $H^*(Y_a)\to H^*(Y_{<a})$ is the 0-map, the long exact sequence of the pair $(Y_a, Y_{<a})$ splits into short exact sequences, and similarly for the $E_2$ terms, it follows that the spectral sequence $E_a$ also collapses at $E_2$ term.
	Thus, the higher differentials in $E$ are 0.
\end{proof}

\section{Some previous cohomology computations}\label{s:cohom}
Suppose $G$ is a discrete group and $Y$ is a $G$-CW complex.
Let $N$ be a left $G$-module.
The \emph{$G$-equivariant cochain complex} is defined by,
\begin{equation*}
	C^i_G(Y;N):= \Hom_G(C_i(Y);N),
\end{equation*}
where $C_*(Y)$ denotes the usual cellular chain complex.
Its cohomology is denoted $H^*_G(Y;N)$.
If $G$ acts freely on $Y$, then $C^*_G(Y;N)$ can be identified with $C^*(Y/G;N)$, the cellular cochains on the quotient space with local coefficients in $N$.
There is a similar result even when the action is not free; however, the coefficients will no longer be locally constant.
Rather, the coefficients will be in a contravariant functor $\ci(N)$ from the poset of cells in $Y/G$ to the category of abelian groups: $\ci(N)$ assigns to a cell $c$ the fixed submodule $N^{G_c}$, where $G_c$ denotes the stabilizer of some lift of $c$ and where
\begin{equation}\label{e:coeff}
	C^i_G(Y)=C^i(Y/G;\ci(N)).
\end{equation}
For $Y=EG$, the universal cover of the classifying space $BG$, define the \emph{cochains and cohomology of $G$ with coefficients in $N$} by
\begin{align*}
	C^*(G;N)&:=C^*_G(EG;N)=C^*(BG;N),\\
	H^*(G;N)&:=H^*_G(EG;N)=H^*(BG;N).
\end{align*}

The spectral sequence arguments from \S\ref{s:ss} work with equivariant cochains (in particular with cochains with local coefficients) as long as the cover $\cv$ is $G$-equivariant.

In what follows we will be interested principally in two cases: $N=\zz G$, the group ring, and $N=\cN(G)$, the group von Neumann algebra.
We recall the definitions and some results which have been proved previously.

\paragraph{Group ring coefficients.} In the case of group ring coefficients, if $G$ acts properly and cocompactly on $Y$, there is the following formula (cf.~\cite[Ex.~4, p.~209]{brown}),
\begin{equation*}\label{e:hc}
	H^*_G(Y;\zz G)=H^*_c(Y),
\end{equation*}
where $H^*_c(Y)$ means cohomology with compact supports.
Thus, Lemma~\ref{l:spaces} implies the following.
\begin{corollary}\label{c:gprdt}
	For each $s\in S$, suppose $G_s$ is a discrete group and that $(Z_s,G_sb_s)$ a $G_s$-CW complex together with a free orbit.
	Also suppose each $G_s$-action is proper and cocompact and that $Z_s$ is acyclic.
	Then for $G=\prod_\gG G_s$ and $Z=\prod_\gG (Z_s,G_sb_s)$, we have
	\[
		H^*(G;\zz G)=H^*_c(Z).
	\]
\end{corollary}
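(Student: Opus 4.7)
The plan is to verify the hypotheses of the compact-supports formula $H^*_G(Y;\zz G)=H^*_c(Y)$ cited just above the corollary for proper cocompact actions, and then to identify the equivariant cohomology of $Z$ with the group cohomology of $G$.

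First, I would check three properties of the $G$-action on $Z=\prod_\gG(Z_s,G_sb_s)$: properness is Lemma~\ref{l:spaces}(i); acyclicity is Lemma~\ref{l:spaces}(ii); and cocompactness follows from the observation that $Z/G=Z'/G'$, where $Z'=\piprod_L(Z_s,G_sb_s)$ and $G'=\prod_{s\in S}G_s$, and this common quotient is the finite polyhedral product of the compact spaces $Z_s/G_s$ (which are compact because each $G_s$-action on $Z_s$ was assumed cocompact).

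With the three hypotheses in hand, the cited formula immediately gives $H^*_G(Z;\zz G)=H^*_c(Z)$. It remains to identify $H^*_G(Z;\zz G)$ with $H^*(G;\zz G)$. For this, I would form the Borel construction $E:=Z\times_G EG$, viewed as a fibre bundle over $BG$ with fibre $Z$. Since $Z$ is acyclic, the Serre spectral sequence with the appropriate $\zz G$ local coefficients collapses at $E_2$ to its bottom row, giving $H^*_G(Z;\zz G)=H^*(E;\zz G)=H^*(BG;\zz G)=H^*(G;\zz G)$. Combining the two equalities yields the corollary.

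The main obstacle lies in this last identification: $Z$ is only acyclic, not contractible, and its cell-stabilizers are generally nontrivial finite subgroups of $G$, so $Z$ is not literally a model for $EG$. The Borel-construction argument sidesteps this because the Serre spectral sequence requires only acyclicity, not contractibility, of the fibre. An alternative route uses the complementary fibration $E\to Z/G$, whose fibres are the classifying spaces $BG_\sigma$ of the cell-stabilizers: properness forces each $G_\sigma$ to be finite, and $\zz G$ is free (hence injective) as a $\zz G_\sigma$-module, so $H^{>0}(G_\sigma;\zz G)=0$ and the resulting Leray spectral sequence collapses to $E_2^{*,0}=H^*_G(Z;\zz G)$.
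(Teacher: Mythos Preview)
Your argument is correct and matches the paper's approach, which is simply the sentence ``Thus, Lemma~\ref{l:spaces} implies the following.''  You have supplied exactly the details the paper suppresses: properness and acyclicity from Lemma~\ref{l:spaces}, cocompactness from $Z/G=Z'/G'=\piprod_L(Z_s/G_s,b_s)$, and the identification $H^*_G(Z;\zz G)\cong H^*(G;\zz G)$.

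Two small points.  First, in the parenthetical ``free (hence injective)'': free $\zz G_\sigma$-modules are \emph{not} injective (they are not divisible as abelian groups).  What you need, and what freeness does give, is that they are acyclic for $H^*(G_\sigma;-)$; your conclusion $H^{>0}(G_\sigma;\zz G)=0$ is correct.  Second, your two ``routes'' are not really alternatives but two halves of one argument: the Serre spectral sequence for $E\to BG$ yields $H^*(E;\zz G)=H^*(G;\zz G)$, while the Leray spectral sequence for $E\to Z/G$ yields $H^*(E;\zz G)=H^*_G(Z;\zz G)$ (the naive equivariant cohomology used in the formula $H^*_G(Z;\zz G)=H^*_c(Z)$).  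The first equality in your chain ``$H^*_G(Z;\zz G)=H^*(E;\zz G)$'' is precisely what the second spectral sequence supplies, and is not automatic from the first.  A more direct phrasing, avoiding $E$ altogether, is that $C_*(Z)$ is a resolution of $\zz$ by $\zz G$-modules each of which is $\Hom_G(-,\zz G)$-acyclic (by Shapiro and freeness of $\zz G$ over $\zz G_\sigma$), so it computes $\operatorname{Ext}^*_{\zz G}(\zz,\zz G)=H^*(G;\zz G)$.
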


The cohomology of Coxeter groups are computed as follows.
\begin{theorem}[\cite{d98} as well as \cite{ddjo2}]\label{t:d98}
	\[
		H^n(W;\zz W)=H^n_c(\cu(W,K))=\bigoplus_{J\in\cs(W,S)} H^n(K,K^{S-J})\otimes \h{A}(W)^J,
	\]
	where $\h{A}(W)^J$ is the free abelian group on the set of $w\in W$ which have reduced expressions ending with letters in $J$.
\end{theorem}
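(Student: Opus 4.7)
The first equality is standard: $W$ acts properly, cocompactly, and with finite stabilizers on the contractible CW complex $\cu(W,K)$, so the equivariant cochain complex $\Hom_W(C_*(\cu(W,K)),\zz W)$ computes $H^*(W;\zz W)$. Applied orbit-by-orbit, $\Hom_W(\zz[W/W_J],\zz W)=(\zz W)^{W_J}$ is free abelian of the appropriate rank whenever $W_J$ is finite, which identifies this complex with the compactly supported cellular cochains $C^*_c(\cu(W,K))$.

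For the second equality, the plan is to decompose the cellular cochain complex according to the \emph{top representative} of each cell. Every cell of $\cu(W,K)$ admits a unique expression $[u,\gs]$ in which $\gs$ is a cell of $K$ and $u$ is the longest element of the coset $uW_{S(\gs)}$; this condition is equivalent to $S(\gs)\subseteq \In(u)$ (existence and uniqueness use that the finite parabolic $W_{S(\gs)}$ has a unique longest element). Since the cells of $K$ with $S(\gs)\subseteq \In(u)$ are exactly those not lying in the subcomplex $K^{S-\In(u)}=\bigcup_{s\notin \In(u)} K_s$, the top-representative assignment identifies, as abelian groups,
\[
C^n_c(\cu(W,K))\;\cong\;\bigoplus_{u\in W} C^n\bigl(K,\,K^{S-\In(u)}\bigr).
\]

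Next, I would analyse how the cellular coboundary $d$ behaves under this decomposition. The cofaces of $[u,\gs]$ in $\cu(W,K)$ are the cells $[uv,\gs']$ with $\gs'$ a coface of $\gs$ in $K$ and $v$ running over representatives of $W_{S(\gs)}/W_{S(\gs')}$. The top representative of $[uv,\gs']$ is the longest element of $uvW_{S(\gs')}\subseteq uW_{S(\gs)}$, so it has length at most $l(u)$, with equality precisely when $v\in W_{S(\gs')}$ (in which case $[uv,\gs']=[u,\gs']$). Consequently $d$ preserves the increasing filtration $F^{\le k}=\bigoplus_{l(u)\le k}C^*(K,K^{S-\In(u)})$, and on the associated graded it acts block-diagonally as the standard cellular coboundary on each summand. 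This gives a spectral sequence
\[
E_1^{k,n}=\bigoplus_{l(u)=k} H^n\bigl(K,K^{S-\In(u)}\bigr)\ \Longrightarrow\ H^{k+n}_c(\cu(W,K)).
\]

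The hardest step is to show this spectral sequence collapses at $E_1$ and that the filtration splits as abelian groups, so that one recovers an actual isomorphism and not just an isomorphism of associated graded groups. Higher differentials are driven by the ``lower-order'' corrections appearing when the orbit sum $\sum_{v\in W_{S(\gs)}}uv$ is re-expanded in the larger fixed-point space $(\zz W)^{W_{S(\gs')}}$: each such expansion picks up contributions indexed by coset representatives $u'$ with $l(u')<l(u)$. Showing these corrections die in cohomology requires a careful geometric analysis using the mirror structure on $K$, producing explicit cocycle representatives in each $C^*(K,K^{S-\In(u)})$ that lift to honest cellular cocycles on $\cu(W,K)$. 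Once collapse is established, grouping summands by $J:=\In(u)$ (automatically spherical, since $W_J$ stabilises a vertex of $\cu(W,K)$ and is therefore finite) gives the claimed formula with $\h{A}(W)^J$ the free abelian group on $\{u\in W:\In(u)=J\}$, i.e.\ the elements whose right descent set equals $J$.
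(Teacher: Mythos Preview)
The paper does not contain a proof of this theorem: it is stated in \S3 as a previously known result, with attribution to \cite{d98} and \cite{ddjo2}, and is then used as input for later computations. So there is no ``paper's own proof'' to compare against.

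That said, your outline is essentially the strategy of the original paper \cite{d98}. The identification of cells by their top (longest) coset representative, the resulting bijection with cells of $(K,K^{S-\In(u)})$, and the word-length filtration are all correct and are exactly what Davis does. Your coboundary analysis is also right: passing to a coface can only shorten the top representative, and the diagonal part of $d$ is the ordinary relative coboundary on each $C^*(K,K^{S-\In(u)})$.

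You have, however, correctly located the genuine content and then stopped short of it. The collapse of the length spectral sequence at $E_1$ (equivalently, the splitting of the filtration) is the entire point, and ``requires a careful geometric analysis'' is not a proof. In \cite{d98} this is handled by an explicit combinatorial/geometric argument; in \cite{ddjo2} and in \cite[Ch.~8]{dbook} a cleaner approach is given that bypasses the spectral sequence altogether by writing down an explicit chain--cochain pairing (using the retractions onto ``sectors'' $wK^{S-\In(w)}$) which exhibits the direct-sum decomposition on the nose. If you want a complete argument, that second route is the one to follow: it produces actual cocycle representatives rather than relying on vanishing of higher differentials.

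One small wording issue: at the end you describe $\h A(W)^J$ as the free abelian group on $\{u:\In(u)=J\}$. That is the correct set for the direct-sum decomposition to make sense (and it is how the result is used), and it is consistent with the paper's phrasing once one reads ``ending with letters in $J$'' as ``right descent set equal to $J$''; just be aware that different sources state this with $\In(u)\subseteq J$ and then use an inclusion--exclusion, so check conventions when citing.
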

Using this, Jensen and Meier proved the following.
(A different proof of this will be given in \S\ref{ss:artin2}.)
\begin{theorem}[Jensen--Meier \cite{jm}]\label{t:jm}
	Suppose $(W,S)$ is a RACS and $A_L$ is the associated RAAG. Then
	\[
		H^n(A_L;\zz A_L)=\bigoplus_{J\in \cs(W,S)} H^{n-|J|}(K_J,\partial K_J) \otimes \zz (A_L/A_J).
	\]
\end{theorem}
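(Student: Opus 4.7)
The plan is to apply the spectral sequence machinery of \S\ref{s:ss} to an $A_L$-equivariant cover of the universal cover $\wt{T}_L$ of the Salvetti complex. Since $\wt{T}_L$ is a contractible CAT(0) cubical complex on which $A_L$ acts freely, $H^n(A_L;\zz A_L)=H^n_{A_L}(\wt{T}_L;\zz A_L)$. For each $J\in\cs(W_L,S)$ let $Y_J\subset \wt{T}_L$ be the preimage of the subtorus $T^J\subset T_L$. Because $T^I\cap T^J=T^{I\cap J}$ holds inside the ambient torus $T^S$, the family $\{Y_J\}_{J\in\cs}$ is an $A_L$-equivariant poset of spaces on $\wt{T}_L$ in the sense of \S\ref{s:ss}.

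To compute the coefficient system, note that $A_L$ acts freely on $Y_J$ with quotient $T^J=K(A_J,1)$, and $A_J\cong\zz^{|J|}$. Shapiro's lemma (or direct computation via the free $\zz A_J$-module structure on $\zz A_L$) then gives
\[
H^j_{A_L}(Y_J;\zz A_L)=H^j(A_J;\zz A_L)=\bigoplus_{A_L/A_J}H^j_c(\rr^{|J|}),
\]
which equals $\zz(A_L/A_J)$ in degree $j=|J|$ and vanishes otherwise. In particular, the coefficient groups attached to different $J$ live in distinct cohomological degrees.

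I next verify condition (Z) of Lemma~\ref{l:main}: the subspace $Y_{<J}=\bigcup_{I\subsetneq J}Y_I$ is the preimage of $T^{<J}:=\bigcup_{I\subsetneq J}T^I$, which is a subcomplex of the $|J|$-torus $T^J$ of dimension at most $|J|-1$. Since $A_L$ acts freely on $Y_{<J}$ with quotient $T^{<J}$, the group $H^{|J|}_{A_L}(Y_{<J};\zz A_L)=H^{|J|}(T^{<J};\text{local})$ vanishes by dimension, while $H^j_{A_L}(Y_J;\zz A_L)=0$ for $j\neq|J|$. Hence the restriction $H^*_{A_L}(Y_J;\zz A_L)\to H^*_{A_L}(Y_{<J};\zz A_L)$ is the zero map. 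Lemma~\ref{l:main}(\ref{l:mainiii}) then collapses the spectral sequence at $E_2$ and identifies
\[
\grh^n_{A_L}(\wt{T}_L;\zz A_L)\cong\bigoplus_{J\in\cs} H^{n-|J|}(K_J,\partial K_J)\otimes\zz(A_L/A_J),
\]
using $\flag(\cs_{\ge J})=K_J$ and $\flag(\cs_{>J})=\partial K_J$.

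The main obstacle is the verification of (Z); here the product structure of $T^J$ makes the dimension argument clean, whereas in the general Artin case (Theorem~\ref{t:salvetti}) the analog requires more work. A secondary point is the passage from the associated graded $\grh^n$ to $H^n$ as abelian groups (the form stated in the theorem), an extension problem of the kind raised in the Question of the Introduction; in the RAAG case this should follow either by exhibiting an explicit splitting of the filtration built from the natural inclusions $T^J\hookrightarrow T_L$, or by appealing to Jensen--Meier's original argument.
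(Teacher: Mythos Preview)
Your proposal is correct and follows essentially the same line as the paper's proof. The paper deduces Theorem~\ref{t:jm} as the right-angled special case of Theorem~\ref{t:salvetti}: it uses the poset of spaces $\{X_J\}_{J\in\cs}$ on the Salvetti complex, invokes the fact that each $A_J$ is a duality group of dimension $|J|$ (Lemma~\ref{l:artinZ}), and applies Lemma~\ref{l:main} exactly as you do; your dimension argument for condition~(Z) is the same as the one in Lemma~\ref{l:artinZ}. You are also right to flag the passage from $\grh^n$ to $H^n$: the paper's own argument yields only the associated graded (as in Theorem~\ref{t:salvetti}) and defers the ungraded statement to Jensen--Meier's original proof, consistent with the Question in the Introduction.
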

\begin{theorem}\label{t:ddjmo}
	\textup{(\cite[Corollary 8.2]{ddjmo}).} Suppose $\cac$ is a locally finite building of type $(W,S)$.
	Then
	\[
		H^n_c(\cb(W,K))=\bigoplus_{J\in\cs(W,S)} H^n(K,K^{S-J})\otimes \h{A}(\cac)^J,
	\]
	where $\h{A}(\cac)^J$ is a certain subgroup of the free abelian group on $\cac$.
\end{theorem}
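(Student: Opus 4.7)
My plan is to extend Davis's computation of $H^*_c(\cu(W,K))$ from \cite{d98} (our Theorem~\ref{t:d98}, the special case $\cac = W$) to an arbitrary locally finite building of type $(W,S)$. The strategy is a filtration argument based on the Weyl distance from a fixed base chamber.

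Fix a base chamber $C_0 \in \cac$ and enumerate $\cac = \{C_0, C_1, C_2, \ldots\}$ so that $\ell(\gd(C_0, C_i))$ is weakly increasing in $i$. Let $w_i := \gd(C_0, C_i)$ and $\cb_n := \bigcup_{i \le n} C_i \cdot K \subset \cb(\cac, K)$; these compact subcomplexes exhaust $\cb(\cac,K)$. For each $i$, put
\[
\In(C_i) := \{s \in S : \ell(w_i s) < \ell(w_i)\},
\]
the right descent set of $w_i$; by standard Coxeter theory this is always a spherical subset. For each $s \in \In(C_i)$, the $s$-adjacent chamber to $C_i$ has strictly smaller Weyl distance to $C_0$, so appears earlier in the enumeration. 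Consequently $C_i \cdot K$ meets $\cb_{i-1}$ exactly along $C_i \cdot K^{\In(C_i)}$.

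The main step is to analyze the pairs $(\cb_n, \partial \cb_n)$, where $\partial \cb_n$ denotes the subcomplex of \emph{free mirrors} -- those mirrors of chambers in $\cb_n$ whose adjacent chamber does not yet lie in $\cb_n$. A Mayer--Vietoris induction on $n$, parallel to the one in \cite{d98}, yields a direct-sum decomposition
\[
H^*(\cb_n, \partial \cb_n) \;\cong\; \bigoplus_{i \le n} H^*\bigl(K,\, K^{S - \In(C_i)}\bigr),
\]
the $i$-th summand being the contribution of $C_i$ computed relative to its still-free mirrors, indexed by $S - \In(C_i)$. Taking the direct limit $H^*_c(\cb(\cac,K)) = \varinjlim_n H^*(\cb_n, \partial \cb_n)$ and regrouping by the value $J := \In(C)$ produces the stated formula, with $\hat{A}(\cac)^J$ defined as the free abelian group on those chambers $C \in \cac$ satisfying $\In(C) = J$.

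The principal obstacle is the inductive step in the direct-sum decomposition: at each attachment one must show that the relevant connecting homomorphisms vanish, so the new summand splits off canonically. This hinges on the contractibility of each $K_s$ together with careful bookkeeping of how $\partial \cb_n$ evolves when the $(n{+}1)$-st chamber is glued (some old free mirrors become interior while new ones appear). Both ingredients are purely \emph{local} phenomena at individual chambers and residues, so the argument of \cite{d98} transplants from the thin building to $\cac$ with no structural change; the extra combinatorial richness of $\cac$ is absorbed entirely into the definition of $\hat{A}(\cac)^J$, which specializes to Davis's $\hat{A}(W)^J$ when $\cac = W$.
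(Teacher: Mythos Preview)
The paper does not prove this theorem; it is quoted from \cite{ddjmo} without argument, so there is no ``paper's own proof'' to compare against.  Your outline follows the natural template of \cite{d98}, but it contains a real gap that is specific to \emph{thick} buildings and is not merely a matter of bookkeeping.

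The claimed intermediate decomposition
\[
H^*(\cb_n,\partial\cb_n)\;\cong\;\bigoplus_{i\le n} H^*\bigl(K,\,K^{S-\In(C_i)}\bigr)
\]
already fails in the smallest thick example.  Take $W=\langle s\mid s^2=1\rangle$ and let $\cac$ be a rank-one spherical building with three chambers $C_0,C_1,C_2$ (a single $s$-panel of size~$3$).  Then $K$ is an interval, $K_s$ a point, and $\cb(\cac,K)$ is a tripod; since it is compact and contractible, $H^*_c(\cb(\cac,K))=\zz$ concentrated in degree~$0$.  With your enumeration, $\In(C_0)=\emptyset$ and $\In(C_1)=\In(C_2)=\{s\}$, so your formula at $n=2$ gives
\[
H^0(K,K_s)\oplus H^0(K,\emptyset)\oplus H^0(K,\emptyset)=0\oplus\zz\oplus\zz=\zz^2,
\]
which is wrong.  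Correspondingly, your proposed description of $\hat A(\cac)^J$ as the free abelian group on $\{C:\In(C)=J\}$ gives $\hat A(\cac)^{\{s\}}\cong\zz^2$, whereas the correct group must have rank~$1$.

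The source of the error is precisely the sentence ``the argument of \cite{d98} transplants from the thin building to $\cac$ with no structural change.''  In the thin case, once the chamber across an $s$-mirror is added, that mirror becomes interior.  In a thick panel of size $q_s+1$, the $s$-mirror of $C_i$ remains a \emph{free} mirror until \emph{all} chambers of that panel have been enumerated, so the relative pair $(\cb_n,\partial\cb_n)$ does not see the contribution $H^*(K,K^{S-\In(C_i)})$ at the step where $C_i$ is added.  The groups $\hat A(\cac)^J$ in \cite{ddjmo} are genuine subgroups of $\zz\cac$ (cut out by panel-compatibility conditions), not free on a subset of chambers; the thick combinatorics of panels enters their definition in an essential way, and this is exactly what your sketch is missing.
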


In particular, by Lemma~\ref{l:RAB}, this theorem gives the following computation of \cite{ddjo2} of the compactly supported cohomology of a locally finite RAB and hence, of the cohomology with group ring coefficients of the graph product of a collection of finite groups.
\begin{theorem}[{\cite[Theorem 6.6]{ddjo2}}]\label{t:ddjo2}
	Suppose $\{G_s\}_{s\in S}$ is a collection of finite groups and that $G=\prod_\gG G_s$.
	Then
	\[
		H^n(G;\zz G)=\bigoplus_{J\in \cs(L)} H^n(K,K^{S-J})\otimes \hat{A}(J),
	\]
	where $\hat{A}(J)$ is a certain (free abelian) subgroup of $\zz (G/G_J)$.
\end{theorem}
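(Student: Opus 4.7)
The plan is to reduce this theorem to Theorem~\ref{t:ddjmo} by exploiting the fact, given by Lemma~\ref{l:RAB}, that the graph product $G=\prod_\gG G_s$ carries the structure of a building of type $(W_L,S)$, with $s$-panels the left cosets $gG_s$. Because each $G_s$ is finite, every panel is finite, so the building is locally finite and Theorem~\ref{t:ddjmo} will apply to $\cac=G$, computing $H^n_c(\cb(G,K))$. Two further points then finish the proof: (a) identifying $H^n(G;\zz G)$ with $H^n_c(\cb(G,K))$, and (b) showing that the subgroup $\h{A}(G)^J\subseteq\zz G$ produced by Theorem~\ref{t:ddjmo} descends to a free abelian subgroup $\h{A}(J)\subseteq\zz(G/G_J)$.

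For (a), I would apply Corollary~\ref{c:gprdt} with $Z_s:=\cone(G_s)$ and basepoint $b_s\in G_s\subset\cone(G_s)$. Each $Z_s$ is contractible and each $G_s$-action is proper and cocompact (since $G_s$ is finite), so the corollary yields
\[
H^*(G;\zz G)=H^*_c(Z),\qquad Z=\prod\nolimits_\gG(\cone(G_s),G_s).
\]
By Example~\ref{ex:cone} and Lemma~\ref{l:cover}, $Z$ is the universal cover of the polyhedral product $\piprod_L(\cone(G_s),G_s)$, and the group of lifts of the $G'$-action is exactly the graph product $G$; moreover, Example~\ref{ex:cone} identifies this universal cover with the standard realization of the RAB $G$. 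Combined with the observation from Example~\ref{ex:01} that the quotient chamber $\piprod_L([0,1],0)$ equals $K$, this gives $Z=\cb(G,K)$, so $H^n(G;\zz G)=H^n_c(\cb(G,K))$.

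Applying Theorem~\ref{t:ddjmo} with $\cac=G$ then produces
\[
H^n(G;\zz G)=\bigoplus_{J\in\cs(L)}H^n(K,K^{S-J})\otimes\h{A}(G)^J,
\]
and it remains to identify $\h{A}(G)^J$ with a free abelian subgroup $\h{A}(J)$ of $\zz(G/G_J)$. This is the main technical obstacle. Concretely, one must analyze the map $\pi:G\to W_L$ from \S\ref{ss:spaces} and the building-theoretic definition of $\h{A}(\cac)^J$ in \cite{ddjmo} to verify that, when $\cac=G$ and the ``residues ending in $J$'' are parametrized by left cosets $gG_J$, the generators of $\h{A}(G)^J$ can be chosen to be sums supported on complete $J$-residues $gG_J$. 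Once each basis element is recognized as a $\zz$-multiple of a coset $gG_J\in G/G_J$, one obtains a canonical injection $\h{A}(G)^J\hookrightarrow\zz(G/G_J)$ whose image is the desired $\h{A}(J)$. With this identification — the work carried out in \cite[Theorem 6.6]{ddjo2} — the stated formula follows by substitution.
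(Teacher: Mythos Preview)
Your proposal is correct and follows exactly the route the paper indicates: the paper does not give a standalone proof of Theorem~\ref{t:ddjo2} but simply remarks that, by Lemma~\ref{l:RAB}, the graph product $G$ is a locally finite RAB of type $(W_L,S)$, so Theorem~\ref{t:ddjmo} applies. Your steps (a) and (b) flesh out precisely the two identifications the paper leaves implicit, and your honest acknowledgment that the description of $\h{A}(J)$ inside $\zz(G/G_J)$ is the content of \cite[Theorem~6.6]{ddjo2} is appropriate, since that is where the actual work is done.
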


\paragraph{$L^2$-cohomology and $L^2$-Betti numbers.} The real \emph{group algebra}, $\bR G$, of $G$ consists of all finitely supported functions $G\to \bR$.
Its \emph{standard basis} is $\{e_g\}_{g\in G}$, where $e_g$ denotes the indicator function of $\{g\}$.
The \emph{standard inner product} on $\bR G$ is defined by $e_g\cdot e_h=\gd_{gh}$, where $\gd_{gh}$ is the Kronecker delta.
The Hilbert space completion of $\bR G$, denoted $L^2(G)$, consists of all square summable functions $G\to \bR$.
The group $G$ acts orthogonally on $L^2(G)$ by either left or right translation.
To fix ideas, let us say that it is the right action defined by left translation.
The \emph{von Neumann algebra} of $G$, denoted by $\cN(G)$, is the commutant of the $G$-action.
It acts on $L^2(G)$ from the left.
For $\gf\in \cN(G)$, define
\begin{equation*}
	\tr_{\cN(G)}(\gf):= (\gf e_1)\cdot (e_1).
\end{equation*}
If $V$ is a closed $G$-stable subspace of a finite direct sum of copies of $L^2(G)$, then its \emph{von Neumann dimension} is defined by
\begin{equation*}
	\dim_{\cN(G)} V:=\tr_{\cN(G)} (p_V),
\end{equation*}
where $p_V:\oplus L^2(G)\to \oplus L^2(G)$ is orthogonal projection onto $V$.

Suppose the $G$-CW complex $Y$ is proper and cocompact.
Define $L^2C^*(Y)$ to be the cochain complex of real-valued, square summable cochains on $Y$.
Denote its reduced cohomology group by $L^2H^*(Y)$.
(Here ``reduced'' means $\Ker \gd/ \ol{\Ima \gd}$, where $\gd:L^2C^i(Y)\to L^2C^{i+1}(Y)$ is the coboundary operator.
It is necessary to take the closure of $\Ima \gd$ for the quotient to be a Hilbert space.) Define the $i^{th}$ $L^2$-Betti number by
\[
	L^2b^i(Y;G):=\dim_{\cN(G)} L^2H^i(Y).
\]
If $Y$ is acyclic, then $L^2H^i(Y)$ depends only on $G$ and is denoted by $L^2H^i(G)$ and similarly, $L^2b^i(G):= L^2b^i(Y;G)$.
Thus, Lemma \ref{l:spaces} implies the following.
\begin{corollary}\label{c:gprdtb}
	For each $s\in S$, suppose $G_s$ is a discrete group and that $(Z_s,G_sb_s)$ a $G_s$-CW complex together with a free orbit.
	Also suppose each $G_s$-action is proper and cocompact and that $Z_s$ is acyclic.
	Then for $G=\prod_\gG G_s$ and $Z=\prod_\gG (Z_s,G_sb_s)$, we have
	\[
		L^2b^*(G)=L^2b^i(Z;G).
	\]
\end{corollary}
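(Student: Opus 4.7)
The plan is to reduce this to a direct application of Lemma~\ref{l:spaces} together with the standard fact that for a proper, cocompact $G$-CW complex $Y$ which is acyclic, one has $L^2b^i(Y;G) = L^2b^i(G)$. This exactly mirrors the argument for Corollary~\ref{c:gprdt} in the group ring case, only here I invoke the $L^2$ analogue of the equality $H^*_G(Y;\zz G)=H^*_c(Y)$.

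First, I would assemble the three hypotheses needed to conclude $L^2b^*(Z;G) = L^2b^*(G)$: (i) $Z$ is acyclic; (ii) $G$ acts properly on $Z$; (iii) $G$ acts cocompactly on $Z$. Parts (i) and (ii) come directly from Lemma~\ref{l:spaces}. So the only thing requiring verification is cocompactness.

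Second, to establish cocompactness, I would identify the quotient $Z/G$ explicitly. By construction (Example~\ref{ex:cone} and Lemma~\ref{l:cover}), $Z$ is a covering space of the polyhedral product $Z':=\piprod_L(Z_s, G_sb_s)$ whose group of deck transformations is the kernel of the surjection $G=\prod_\gG G_s \to G'=\prod_{s\in S} G_s$, and the $G'$-action on $Z'$ lifts to the full $G$-action on $Z$. Therefore
\[
Z/G \;=\; Z'/G' \;=\; \textstyle{\piprod_L}\displaystyle{(Z_s/G_s,\, \text{pt})},
\]
where the last identification uses that $G' = \prod_s G_s$ acts factor-wise on $\prod_s Z_s$ preserving each $Z'_J$. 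Since each $Z_s/G_s$ is compact by the cocompactness hypothesis on $G_s \curvearrowright Z_s$, the polyhedral product on the right is a closed subspace of the compact space $\prod_s (Z_s/G_s)$, hence compact. This establishes cocompactness of $G \curvearrowright Z$.

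Third, with (i)--(iii) in hand, the standard formula $L^2b^i(Z;G) = L^2b^i(G)$ from the definition of $L^2$-Betti numbers (applied to any proper, cocompact, acyclic $G$-CW complex) immediately yields the claim. There is no serious obstacle here: the content of the corollary is entirely contained in Lemma~\ref{l:spaces} and the construction of the graph product of spaces in \S\ref{ss:spaces}. The only mild point is the identification of $Z/G$ with a polyhedral product of compact spaces, which I would record in a single sentence referencing Lemma~\ref{l:cover}.
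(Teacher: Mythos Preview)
Your proposal is correct and follows exactly the route the paper takes: the paper simply states that the corollary is implied by Lemma~\ref{l:spaces}, relying on the standard fact that $L^2$-Betti numbers of $G$ can be computed from any proper, cocompact, acyclic $G$-CW complex. Your explicit verification of cocompactness via the identification $Z/G \cong \piprod_L(Z_s/G_s,\text{pt})$ is a detail the paper leaves implicit, but it is the right way to fill the gap and uses nothing beyond the constructions already in \S\ref{ss:spaces}.
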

The $L^2$-Betti numbers of Coxeter groups have proved to be difficult to compute.
Some partial results and conjectures can be found in \cite{do}.
For locally finite buildings of very large thickness there is a complete calculation due to Dymara--Januszkiewicz \cite{dj}.
The requirement on the thickness is reduced in \cite{ddjo07} (cf.~Theorems~\ref{t:ddjo} and \ref{t:ddjobldg} in \S\ref{ss:betti}).

In the case of Artin groups, we have the following easy computation of \cite{dl}.
(A proof of this will be given in the next section.)
\begin{theorem}[\cite{dl}]\label{t:dl}
	\( L^2 b^n (\wt{X};A) = b^n (K,\partial K)=\ol {b}^{n-1}(L), \) where, as usual, $b^n (K,\partial K)=\dim (H^n(K,\partial K;\bR))$ and $\ol {b}^{n-1}(L)=\dim (\rh^{n-1}(L;\bR))$.
\end{theorem}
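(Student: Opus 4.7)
The plan is to run the Mayer--Vietoris-type spectral sequence of Lemma~\ref{l:ss} for reduced $L^2$-cohomology, applied to the $A$-equivariant poset of spaces $\{\wt X_J\}_{J\in \cs}$ of \S\ref{ss:real}. Throughout, the argument is tracked by von Neumann dimension over $\cN(A)$, which is additive in short exact sequences of Hilbert modules; this bypasses the usual subtleties of reduced $L^2$-cohomology.

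The first task is to evaluate the coefficient system. For nonempty $J\in \cs$, $\wt X_J$ is an $A$-equivariant disjoint union, indexed by $A/A_J$, of copies of the universal cover of $X_J$. Since $A_J$ is a finite-type Artin group, Deligne's theorem gives that this universal cover is contractible; hence $\wt X_J\cong A\times_{A_J} EA_J$ as an $A$-space, and therefore $\dim_{\cN(A)} L^2 H^j(\wt X_J) = L^2 b^j(A_J)$. The key vanishing is $L^2 b^j(A_J)=0$ for every nonempty spherical $J$: each irreducible finite-type Artin factor has infinite cyclic center, so $A_J$ contains $\bZ$ as a normal amenable subgroup, and since $A_J$ is of type $F$ the Cheeger--Gromov/L\"uck vanishing theorem \cite{luck} forces all its $L^2$-Betti numbers to be zero. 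For $J=\emptyset$, by contrast, $\wt X_\emptyset$ is the discrete $A$-set of vertices of $\wt X$, so $L^2 H^0(\wt X_\emptyset) = \ell^2(A)$ has $\cN(A)$-dimension $1$ and $L^2 H^j(\wt X_\emptyset)=0$ for $j>0$.

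Consequently, the only part of $E_1$ (equivalently, of $E_2$) with nonzero $\cN(A)$-dimension is the $J=\emptyset$ summand of the row $j=0$, which, via the decomposition of Lemma~\ref{l:main}, is exactly the cellular cochain complex of $(K_\emptyset,\partial K_\emptyset)=(K,\partial K)$ with coefficients in $\ell^2(A)$. All higher differentials on the $E_r$ page map between Hilbert modules at least one of which has zero $\cN(A)$-dimension, so by additivity of von Neumann dimension in the bounded spectral sequence
\[
 L^2 b^n(\wt X; A) \;=\; \dim_{\cN(A)} H^n(K,\partial K;\ell^2(A)) \;=\; b^n(K,\partial K).
\]
The second equality of the theorem is then a purely topological consequence of $K$ being the cone on $\partial K$ (with $\emptyset$ as cone point) and $\partial K$ being the barycentric subdivision of $L$: the cofiber sequence $\partial K\hookrightarrow K$ yields $H^n(K,\partial K;\bR)\cong \tilde H^{n-1}(L;\bR)=\ol b^{n-1}(L)$.

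The main obstacle is giving a rigorous account of the spectral sequence of Lemma~\ref{l:ss} in the reduced $L^2$-setting, where differentials need not have closed image. The simplest remedy is the dimension-theoretic viewpoint used above: since only one corner of the $E_1$-page has nonzero dimension, one does not need the full strength of conditions (Z), (Z$'$); a direct filtration argument on the $A$-equivariant cellular $L^2$-cochain complex of $\wt X$, filtered by $|J|$, suffices and yields the claim without invoking any abstract theory of spectral sequences of Hilbert modules.
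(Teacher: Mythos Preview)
Your argument is essentially the paper's: the same spectral sequence for the poset of spaces $\{X_J\}_{J\in\cs}$, with the same key input that $L^2b^j(A_J)=0$ for nonempty spherical $J$ (via the infinite center of a finite-type Artin group), collapsing everything to the $J=\emptyset$ row $C^*(K,\partial K)$. The only difference is packaging: the paper runs the spectral sequence with $\cN(A)$-module coefficients on $X$ (so ordinary cohomology $H^*(X_J;\cN(A))$, no closures needed), whereas you work with reduced $L^2$-cohomology on $\wt X$ and track von Neumann dimension to sidestep the closure issues you flag---both are legitimate and yield the same computation.
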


In \cite{luck}, L\"uck shows that there is an equivalence of categories between the category of finitely generated $\cN(G)$-modules and the category of orthogonal representations of $G$ on Hilbert spaces which are $G$-isomorphic to closed, $G$-stable subspaces of a finite direct sum of copies of $L^2(G)$.
Given a finitely generated $\cN(G)$-module $E$, define $\dim_{\cN(G)}E$ to be the von Neumann dimension of the corresponding Hilbert space.
Then
\[
	L^2b^i(Y;G)= \dim_{\cN (G)}H^i_G(Y;\cN(G)).
\]
Just as in \eqref{e:coeff}, we have that
\begin{equation}\label{e:coeff2}
	H^*_G(Y;\cN(G))=H^*\bigl (Y/G;\ci(\cN(G))\bigr ).
\end{equation}

\section{Computations}\label{s:comp}

\subsection{Artin groups}\label{ss:artin2}
As in \S\ref{ss:real}, $A$ is the Artin group associated to a Coxeter system $(W,S)$ and $X$ is its Salvetti complex.
As usual, $L=L(W,S)$, $\cs=\cs(W,S)$ and $K:=\flag(\cs)$.
We wish to compute $H^*(X;\zz A)$.
Given a spherical subset $J\in \cs$, $A_J$ is the corresponding Artin group and $X_J$ is its Salvetti complex.
We know that $X_J$ is the classifying space for $A_J$.
By \cite{squier} (see also \cite{bestvina}), for each spherical subset $J$, $A_J$ is a duality group of dimension $|J|$.
This means that $H^*(A_J;\zz A_J)$ is zero for $*\neq |J|$ and that $F_J:=H^{|J|}(A_J;\zz A_J)$ is free abelian.

As explained in \S\ref{ss:real}, the cover $\cv=\{X_J\}_{J\in \cs}$ is a poset of spaces for $X$.
In the case of group ring coefficients, we have a spectral sequence of the type considered in \S\ref{s:ss} converging to $H^*(X;\zz A)$.
It has $E_2$ term: $E_2^{i,j}=H^i(K;\ch^j(\cv))$, where $\ch^j(\cv)$ is the coefficient system, $\gs \mapsto H^i(X_{\min \gs};\zz A)$,. By Lemma~\ref{l:main}, once we establish condition (Z) of \S\ref{s:ss} we will get the following calculation.
\begin{theorem}\label{t:salvetti}
	\[
		\grh^n(X;\zz A)=\bigoplus _{J\in \cs(W,S)} H^{n-|J|}(K_J,\partial K_J) \otimes H^{|J|}(A_J;\zz A).
	\]
\end{theorem}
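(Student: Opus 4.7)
The plan is to apply the $A$-equivariant spectral sequence of Lemma~\ref{l:ss} to the poset of spaces $\cv=\{\wx_J\}_{J\in\cs}$ covering the universal cover $\wx$, with coefficients in $\zz A$, and then to verify condition (Z) of Lemma~\ref{l:main} so that the sequence collapses at $E_2$ in the predicted form. For the $E_1$ level, I would identify $H^j_A(\wx_J;\zz A)=H^j(A_J;\zz A)$: indeed, $\wx_J$ is a disjoint union (indexed by $A/A_J$) of copies of the contractible universal cover of the $K(A_J,1)$-space $X_J$, so $\wx_J=A\times_{A_J}\wt{X_J}$ as an $A$-CW complex. By Squier's theorem, the spherical Artin group $A_J$ is a duality group of dimension $|J|$, so $H^j(A_J;\zz A_J)$ vanishes for $j\neq|J|$ and equals the free abelian group $F_J:=H^{|J|}(A_J;\zz A_J)$ in top degree. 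Decomposing $\zz A$ as a left $A_J$-module into copies of $\zz A_J$ indexed by $A_J\backslash A$ then shows $H^j(A_J;\zz A)$ is free abelian and concentrated in degree $|J|$.

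The main step, and the one I expect to be the principal obstacle, is the verification of condition (Z): that the restriction $H^*_A(\wx_J;\zz A)\to H^*_A(\wx_{<J};\zz A)$ vanishes for each $J$. By the concentration just noted, only degree $|J|$ matters. The key geometric input is the economical cell structure of the Salvetti complex: its cells correspond to elements of $\cs$, with a unique cell $e_J$ of dimension $|J|$ for each $J\in\cs$ (this is immediate from the description of $X'$ as having cell poset $W\times\cs$ with $W$ acting freely on the first factor). Consequently $X_J\setminus X_{<J}=\{e_J\}$, and the equivariant relative cochain complex $C^*_A(\wx_J,\wx_{<J};\zz A)$ is concentrated in degree $|J|$ where it is a single copy of $\zz A$. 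At that degree the inclusion $C^{|J|}_A(\wx_J,\wx_{<J};\zz A)\hookrightarrow C^{|J|}_A(\wx_J;\zz A)$ is an isomorphism of $\zz A$-modules, so the induced map on cohomology $H^{|J|}_A(\wx_J,\wx_{<J};\zz A)\to H^{|J|}_A(\wx_J;\zz A)$ is the quotient by coboundaries and in particular is surjective. The long exact sequence of the pair then forces $H^{|J|}_A(\wx_J;\zz A)\to H^{|J|}_A(\wx_{<J};\zz A)$ to be zero, as required.

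With (Z) in hand, Lemma~\ref{l:main}(ii) yields
\[
\grh^n(X;\zz A)=\bigoplus_{\substack{J\in\cs\\ i+j=n}}H^i(K_J,\partial K_J;H^j(A_J;\zz A)),
\]
since $\flag(\cs_{\ge J})=K_J$ and $\flag(\cs_{>J})=\partial K_J$. Because $H^j(A_J;\zz A)$ is free abelian and supported in $j=|J|$, the universal coefficient theorem collapses each summand to $H^{n-|J|}(K_J,\partial K_J)\otimes H^{|J|}(A_J;\zz A)$, giving the stated formula. The entire argument is compatible with the right $A$-module structures on both sides, so one obtains an isomorphism of right $A$-modules.
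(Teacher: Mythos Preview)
Your proposal is correct and follows essentially the same route as the paper: apply the spectral sequence of Lemma~\ref{l:ss} to the poset of spaces $\{X_J\}_{J\in\cs}$, use Squier's duality result to concentrate $H^*(A_J;\zz A)$ in degree $|J|$, verify condition~(Z), and invoke Lemma~\ref{l:main}. The only cosmetic difference is in the verification of~(Z): you argue via the relative cochain complex and surjectivity of $H^{|J|}(X_J,X_{<J})\to H^{|J|}(X_J)$, whereas the paper simply observes that $X_{<J}$ is $(|J|-1)$-dimensional, so $H^{|J|}(X_{<J};\zz A)=0$ and the restriction is trivially zero---both arguments rest on the same fact that the unique $|J|$-cell of $X_J$ lies outside $X_{<J}$.
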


A similar argument can be used recover the calculation of the $L^2$-Betti numbers of $X$ in \cite{dl}.
(This computation was stated earlier as Theorem~\ref{t:dl}.) The spectral sequence has $E_2^{i,j}=H^i(K;\ch^j(\cv))$, where $\ch^j(\cv)$ is the coefficient system $\gs \mapsto H^i(X_{\min \gs};\cN(A))$.
The key observation in \cite{dl} for proving Theorem~\ref{t:dl} was that for $J \neq \emptyset$, all $L^2$-Betti numbers of $A_J$ vanish.
In particular, condition (Z) of \S\ref{s:ss} holds (since all cohomology groups vanish except when $J=\emptyset$).
So, $E^{i,j}_1$ is $0$ for $j\neq 0$ while
\[
	E^{i,0}_1= C^i(K,\partial K;\cN(A)),
\]
where the coefficients are now constant.
It follows that $H^n(X;\cN(A))\cong H^n(K,\partial K)\otimes \cN(A)$; whence, Theorem~\ref{t:dl}.

In the case of Theorem~\ref{t:salvetti}, Condition (Z) is basically the following lemma.
\begin{lemma}\label{l:artinZ}
	For any $J\in \cs$, $H^*(X_J;\zz A) $ is concentrated in degree $|J|$, where it is equal to the free abelian group $F_J\otimes _{A_J} \zz A$.
	Hence, $H^*(X_J;\zz A) \to H^*(X_{<J};\zz A)$ is the zero map.
\end{lemma}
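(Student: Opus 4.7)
The plan is to identify $H^*(X_J;\zz A)$ with the group cohomology $H^*(A_J;\zz A)$ and then use Squier's duality result for spherical Artin groups (already cited in the paragraph preceding the lemma). Since $J$ is spherical, $W_J$ is finite and the $K(\pi,1)$ Conjecture is known to hold for $A_J$; the Salvetti subcomplex $X_J$ is therefore a model for $BA_J$. Lifting to $\wx_J$, which by the description in \S\ref{ss:real} is a disjoint union indexed by $A/A_J$ of copies of a contractible cover on which $A_J$ acts freely, gives $H^*(X_J;\zz A)=H^*_{A_J}(\wx_J;\zz A)=H^*(A_J;\zz A)$.

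Next I would restrict $\zz A$ to an $A_J$-module via the inclusion $A_J\hookrightarrow A$. Choosing coset representatives for $A_J\backslash A$ yields a decomposition $\zz A\cong\bigoplus_{A_J\backslash A}\zz A_J$ as left $A_J$-modules, with each summand a free module of rank one. Therefore
\[
H^*(A_J;\zz A)=\bigoplus_{A_J\backslash A}H^*(A_J;\zz A_J),
\]
and by Squier's theorem this vanishes except in degree $|J|$, where it equals $\bigoplus_{A_J\backslash A}F_J$. Tracking the right $A$-action (which permutes coset representatives up to a twist in $A_J$) identifies this direct sum with $F_J\otimes_{A_J}\zz A$ as a right $A$-module. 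This establishes the first assertion.

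For the second assertion, since $H^*(X_J;\zz A)$ is concentrated in degree $|J|$, it suffices to show $H^{|J|}(X_{<J};\zz A)=0$. The subcomplex $X_{<J}=\bigcup_{I\subsetneq J}X_I$ is a union of Salvetti complexes $X_I$ for proper subsets $I$ of $J$, and each $X_I$ has dimension $|I|\le|J|-1$ (its top cells are Coxeter cells of type $W_I$). Hence $X_{<J}$ is a CW complex of dimension at most $|J|-1$, and its cellular cochain complex vanishes in degree $|J|$ for any coefficient system. This forces the map $H^{|J|}(X_J;\zz A)\to H^{|J|}(X_{<J};\zz A)$ to land in the zero group, completing the proof.

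There is no real obstacle here; the argument is mostly bookkeeping. The only care required is in matching the right $A$-module structure on $\bigoplus_{A_J\backslash A}F_J$ with the tensor product $F_J\otimes_{A_J}\zz A$, which is a standard identification for induced modules.
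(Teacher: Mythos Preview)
Your argument is correct and follows essentially the same route as the paper: the first assertion is attributed to Squier's duality result (which you have unpacked via the coset decomposition $\zz A\cong\bigoplus_{A_J\backslash A}\zz A_J$), and the second is the same dimension argument, namely that $X_{<J}$ has dimension at most $|J|-1$ so its cohomology in degree $|J|$ vanishes. The paper's proof is two sentences long and simply cites \cite{squier} for the first claim and the dimension bound for the second; your version expands the details but adds nothing new.
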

\begin{proof}
	The first sentence is from \cite{squier}.
	The second sentence follows since $X_J$ is a $|J|$-dimensional CW complex, $H^*(X_J;\zz A_J)$ is concentrated in the top dimension and $X_{<J}$ is a subcomplex of one less dimension.
\end{proof}

Theorem~\ref{t:salvetti} follows immediately from Lemma~\ref{l:main}.
We note that if the $K(\pi,1)$ Conjecture holds for $A$ (i.e, if $X=BA$), then the formula in Theorem~\ref{t:salvetti} is a calculation of $H^*(A;\zz A)$ and Theorem~\ref{t:dl} gives a formula for $L^2b^n(A)$.
In particular, since the $K(\pi,1)$ Conjecture holds for RAAG's, Theorem~\ref{t:salvetti} gives as a corollary, a different proof the Jensen--Meier calculation in \cite{jm} (stated previously as Theorem~\ref{t:jm}).

\subsection{Bestvina--Brady groups}\label{ss:bb}
In this subsection $A_L$ is a RAAG, $T_L$ is its Salvetti complex defined in \eqref{e:tl} and $\pi:A_L\to \zz$ is the standard homomorphism.
We have a $\pi$-equivariant map $p:\wt{T}_L\to \bR$ and $BB_L=\Ker \pi$.
Put $Z_L=p^\minus(t)$ for some $t\in \bR-\zz$ (say for $t=\frac{1}{2}$).
It is proved in \cite{bb} that if $L$ is acyclic, then so is $Z_L$.
If this is the case, $BB_L$ is called a \emph{Bestvina--Brady group}.
We can compute equivariant cohomology of $Z_L$ by the method used in the proof of Theorem.~\ref{t:salvetti}.
\begin{theorem}\label{t:bb}
	The compactly supported cohomology of $Z_L$ is isomorphic to that of $\wt{T}_L$ shifted up in degree by $1$, i.e.,
	\begin{align*}
		H^n_c(Z_L)&= H^n_{BB_L}(Z_L;\zz BB_L)\\
		&=\bigoplus_{J\in \cs(L)_{>\emptyset}} H^{n-|J|+1}(K_J,\partial K_J)\otimes \zz (BB_L/(BB_L\cap A_J)).
	\end{align*}
	When $L$ is acyclic, this is a calculation $H^j(BB_L;\zz BB_L)$.
\end{theorem}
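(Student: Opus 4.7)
The plan is to adapt the proof of Theorem~\ref{t:salvetti} to the level set $Z_L$, using the spectral sequence of Section~\ref{s:ss} on a natural $BB_L$-equivariant poset of spaces on $Z_L$. The cover $\{T^J\}_{J\in \cs(L)}$ of $T_L$ lifts to a cover $\{\wt T^J_L\}_{J\in \cs(L)}$ of $\wt T_L$ by preimages, and intersecting with $Z_L$ yields subcomplexes $Z^J_L:= Z_L\cap \wt T^J_L$. Since $p$ sends every vertex of $\wt T_L$ to an integer, $Z^{\emptyset}_L=\emptyset$, so the effective indexing poset is $\cs(L)_{>\emptyset}$.

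The geometric core is the identification of $Z^J_L$ for $J\in \cs(L)_{>\emptyset}$. A connected component of $\wt T^J_L$ is the universal cover of the $|J|$-torus $T^J$, a copy of $\bR^{|J|}$; on such a component $p$ is, up to integer translation, the linear map $(x_s)_{s\in J}\mapsto \sum_{s\in J} x_s$. Its level set $Z_L$ therefore meets the component in an affine hyperplane, contractible and of dimension $|J|-1$. The components of $\wt T^J_L$ are parametrized by $A_L/A_J$; since $J\ne \emptyset$ makes $\pi|_{A_J}$ surject onto $\zz$, we have $A_L=BB_L\cdot A_J$, so the $BB_L$-set of components is $BB_L/(BB_L\cap A_J)$. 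Hence
\[
H^k_c(Z^J_L)\;=\;\begin{cases} \zz\bigl(BB_L/(BB_L\cap A_J)\bigr) & k=|J|-1,\\ 0 & \text{otherwise.}\end{cases}
\]

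With this computation, condition~(Z) of Section~\ref{s:ss} is free on dimensional grounds: for $J'\subsetneq J$ in $\cs(L)_{>\emptyset}$, the restriction $H^*_c(Z^J_L)\to H^*_c(Z^{J'}_L)$ goes from a group concentrated in degree $|J|-1$ to one concentrated in the strictly smaller degree $|J'|-1$, and the same reasoning kills the map into $H^*_c(Z^J_{L,<J})$ since this union has dimension at most $|J|-2$. Applying Lemma~\ref{l:main}(ii) with the identifications $\flag(\cs_{\ge J})=K_J$ and $\flag(\cs_{>J})=\partial K_J$ produces $E_2$-degeneration and yields
\[
\grh^n_c(Z_L)\;=\;\bigoplus_{J\in \cs(L)_{>\emptyset}} H^{n-|J|+1}(K_J,\partial K_J)\otimes \zz\bigl(BB_L/(BB_L\cap A_J)\bigr),
\]
which is the stated formula. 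When $L$ is acyclic, $Z_L$ is acyclic by \cite{bb}, hence a model for $EBB_L$, so the left side computes $H^n(BB_L;\zz BB_L)$.

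The main obstacle I anticipate is bookkeeping rather than conceptual: confirming that the spectral sequence apparatus of Section~\ref{s:ss} transports to compactly supported cochains (equivalently, $BB_L$-equivariant cochains with $\zz BB_L$-coefficients), and verifying that $\{Z^J_L\}$ satisfies the intersection axiom of a poset of spaces --- which it does because the subtori $T^J$ do so in $T_L$ and intersection with the level set $Z_L$ preserves that structure. Once the framework is set up in parallel to the Salvetti case, the computation is driven entirely by the hyperplane identification in the second paragraph.
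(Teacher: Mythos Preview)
Your proposal is correct and follows essentially the same route as the paper: intersect the cover $\{\wt T_J\}$ with $Z_L$, observe that each $Z_J$ is a disjoint union of affine hyperplanes indexed by $BB_L/(BB_L\cap A_J)$, verify condition~(Z) on purely dimensional grounds (since $Z_{<J}$ has dimension $|J|-2$), and apply Lemma~\ref{l:main}. The paper's argument is identical in substance, including the remark that $A_L=BB_L\cdot A_J$ so that $BB_L/(BB_L\cap A_J)\cong A_L/A_J$. One small wording point: when $L$ is acyclic, $Z_L$ is acyclic but not necessarily contractible, so it is not literally a model for $EBB_L$; rather, the free $BB_L$-action on an acyclic complex is enough to make $C_*(Z_L)$ a free resolution of $\zz$, which is all that is needed.
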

\begin{proof}
	We intersect the cover $\{\wt{T}_J\}_{J\in \cs(L)}$ of $\wt{T}_L$ with $Z_L$.
	Put $Z_J:=Z_L \cap \wt{T}_J$.
	Since $t$ is not an integer, $Z_L$ does not contain any vertices of the cubical complex $\wt{T}_L$, i.e., $Z_L\cap \wt{T}^{\,\emptyset} =\emptyset$.
	On the other hand, when $J$ is nonempty, the intersection of $Z_L$ with any component of $\wt{T}_J$ is a Euclidean subspace of codimension one and the collection of such intersections is in one-to-one correspondence with the cosets of $BB_L\cap A_J$ in $BB_L$.
	So, $\{Z_J\}_{J\in \cs(L)_{>\emptyset}}$ is a poset of spaces on $Z_L$.
	The simplicial complex $\flag(\cs(L)_{>\emptyset})$ is equal to $\partial K$ (i.e., the barycentric subdivision of $L$).
	The $E^{i,j}_1$-term of the spectral sequence is $C^i(K;\ch^j(\cv))$, where the coefficient system takes $\gs$ to $H^j_{BB_L}(Z_{\min \gs};\zz BB_L)$.
	Since each component of $Z_J$ is Euclidean space of dimension $|J|-1$, the coefficients, $H^j_{BB_L}(Z_J;\zz BB_L)$ are $0$ whenever $j\neq |J|- 1$.
	Moreover, for $j=|J|-1$,
	\[
		H^j_{BB_L}(Z_J;\zz BB_L)=H^j_c(\bR^j)\otimes_{BB_L \cap A_J} \zz BB_L=\zz(BB_L/(BB_L\cap A_J)).
	\]
	It follows that conditions (Z$'$) and (Z) of \S\ref{s:ss} hold (because $Z_{<J}$ is a subcomplex of dimension $|J|-2$).
	Hence, by Lemma~\ref{l:main}, the spectral sequence degenerates at $E_2$ and $E_2^{i,j}=\bigoplus_J E_{2,J}^{i,j}$, where $E_{2,J}^{i,j}$ is nonzero only for $j=|J|-1$, in which case,
	\[
		E^{i,|J|-1}_{2,J}= H^i(K_J,\partial K_J)\otimes \zz(BB_L/(BB_L\cap A_J)).
	\]
	The theorem follows.
	(We also note that for $J\neq \emptyset$, $\pi:A_J=\zz^J\to \zz$ is onto, so that $BB_L/(BB_L\cap A_J)\cong A_L/A_J$.)
\end{proof}

Similarly, we compute the $L^2$-Betti numbers of $Z_L$ as follows.
\begin{theorem}\label{t:lraag}
	Suppose $Z_L$ is a generic level set of the function $p:\wt{T}_L\to \bR$.
	Then
	\[
		L^2 b^n(Z_L;BB_L)=\sum_{s\in S} b^n(K_s,\partial K_s)= \sum_{s\in S} \ol{b}{}^{n-1} (\Lk(s)).
	\]
	In particular, when $L$ is acyclic,
	\[
		L^2b^n(BB_L)=\sum_{s\in S} b^n(K_s,\partial K_s).
	\]
\end{theorem}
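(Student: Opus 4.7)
The plan is to adapt the proof of Theorem~\ref{t:bb} by replacing $\zz BB_L$ coefficients with $\cN(BB_L)$ coefficients, along the lines of the $L^2$-argument sketched for Theorem~\ref{t:dl}. I would cover $Z_L$ by the same poset of spaces $\{Z_J\}_{J\in \cs(L)_{>\emptyset}}$ used for Theorem~\ref{t:bb}, where $Z_J := Z_L\cap \wt{T}_J$, and apply the spectral sequence of Lemma~\ref{l:ss} with the equivariant cochain complex $C^*_{BB_L}(Z_L;\cN(BB_L))$ via the identification~\eqref{e:coeff2}. The $E_1^{i,j}$-term is $C^i(\partial K;\ch^j(\cv))$, where $\ch^j(\cv)$ sends a simplex $\gs$ to $H^j_{BB_L}(Z_{\min(\gs)};\cN(BB_L))$, recalling that $\flag(\cs(L)_{>\emptyset})=\partial K$.

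Next I would analyze the local coefficients $H^*_{BB_L}(Z_J;\cN(BB_L))$ in two cases. For a singleton $J=\{s\}$, the restriction $\pi|_{A_{\{s\}}}$ is an isomorphism $\zz\to\zz$, so $BB_L\cap A_{\{s\}}=\{1\}$ and $Z_{\{s\}}$ is a free, single-orbit $BB_L$-set, contributing $\cN(BB_L)$ in degree $0$ and nothing else. For $|J|\ge 2$, each component of $Z_J$ is $\bR^{|J|-1}$ acted on properly, freely and cocompactly by $BB_L\cap A_J\cong \zz^{|J|-1}$, so the $\cN(BB_L)$-dimension of $H^j_{BB_L}(Z_J;\cN(BB_L))$ equals $L^2 b^j(\zz^{|J|-1})$, which vanishes because $\zz^{|J|-1}$ is an infinite amenable group (Cheeger--Gromov), or directly by Fourier analysis on the torus.

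I would then, exactly as in the $L^2$-proof of Theorem~\ref{t:dl}, use the vanishing of these $L^2$-Betti numbers to verify condition (Z) of Section~\ref{s:ss} inside L\"uck's dimension-theoretic framework on $\cN(BB_L)$-modules. The only surviving contributions to $\ch^*(\cv)$ come from the vertices $\{s\}\in \cs(L)$, in degree $0$; for these $Z_{<\{s\}}=\emptyset$, so condition (Z) is trivial. Lemma~\ref{l:main}(\ref{l:mainiii}) then yields degeneration at $E_2$ and
\[
  \grh^n_{BB_L}(Z_L;\cN(BB_L))=\bigoplus_{s\in S} H^n(K_{\{s\}},\partial K_{\{s\}};\cN(BB_L)).
\]
Taking $\cN(BB_L)$-dimensions, and using that the coefficients are constant, gives $L^2b^n(Z_L;BB_L)=\sum_{s\in S} b^n(K_s,\partial K_s)$. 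The second equality $b^n(K_s,\partial K_s)=\ol{b}{}^{n-1}(\Lk(s))$ is immediate from the long exact sequence of the pair, since $K_s$ is the cone on $\partial K_s$ and $\partial K_s$ is the barycentric subdivision of $\Lk(s)$. Finally, when $L$ is acyclic, $Z_L$ is acyclic by Bestvina--Brady, so $L^2b^n(BB_L)=L^2b^n(Z_L;BB_L)$.

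The main obstacle is the careful handling of condition (Z) at the $L^2$-level for $|J|\ge 2$: what vanishes is only the $\cN(BB_L)$-dimension of the reduced cohomology, not necessarily the reduced Hilbert-module cohomology itself. The argument must therefore be phrased in terms of L\"uck's dimension function on the category of $\cN(BB_L)$-modules rather than at the level of Hilbert spaces, which is precisely the sense in which ``degeneration at $E_2$'' is meant.
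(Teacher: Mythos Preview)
Your proposal is correct and follows essentially the same approach as the paper's proof: cover $Z_L$ by the poset $\{Z_J\}_{J\in\cs(L)_{>\emptyset}}$, run the spectral sequence of Lemma~\ref{l:ss} with $\cN(BB_L)$ coefficients, observe that $H^*(Z_J;\cN(BB_L))$ contributes only when $|J|=1$ because each component of $Z_J$ is $\bR^{|J|-1}$, and apply Lemma~\ref{l:main} to read off $E_2^{i,0}=\bigoplus_{s\in S} H^i(K_s,\partial K_s;\cN(BB_L))$.

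Your write-up is in fact more careful than the paper's on one point. The paper simply asserts that ``the coefficients $H^*(Z_J;\cN(BB_L))$ are $0$ whenever $|J|\neq 1$,'' whereas you correctly flag that what vanishes a priori is only the $\cN(BB_L)$-dimension (the $L^2$-Betti numbers of $\zz^{|J|-1}$), and that the argument should therefore be read in L\"uck's dimension-theoretic sense. This is the right caveat; the paper's terse phrasing sweeps it under the rug, but the conclusion about $L^2$-Betti numbers is unaffected either way.
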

\begin{proof}
	As before, the $E^{i,j}_1$-term of the spectral sequence is $C^i(K;\ch^j(\cv))$, where the coefficient system takes $\gs$ to $H^j(Z_{\min \gs};\cN(BB_L))$.
	Since each component of $Z_J$ is Euclidean space of dimension $|J|-1$, the coefficients, $H^*(Z_J;\cN(BB_L))$ are $0$ whenever $|J|\neq 1$.
	Hence, by Lemma~\ref{l:main}, the spectral sequence degenerates at $E_2$ and only $E_2^{i,0}$ can be nonzero, where
	\[
		E^{i,0}_2=\bigoplus_{s\in S} H^i(K_s,\partial K_s;\cN(BB_L)).
	\]
	The theorem follows.
\end{proof}
\begin{Remark}
	Here is a different proof of Theorem~\ref{t:bb} when $L$ is acyclic.
	Put $Y_+:=p^\minus([\frac{1}{2},\infty))$ and $Y_-:=p^\minus((\-\infty,\frac{1}{2}])$.
	We first claim that the compactly supported cohomology of $Y_\pm$ vanishes in all degrees.
	It suffices to consider $Y_+$, the argument for $Y_-$ being similar.
	The arguments of \cite{bb} show that when $L$ is acyclic the inclusion of any level set $p^\minus(t)$ into a sublevel set $p^\minus([\frac{1}{2}, t])$ induces an isomorphism on homology.
	The same argument shows that it induces an isomorphism on compactly supported cohomology, $H^*_c(p^\minus([\frac{1}{2}, t]))\to H^*_c(p^\minus(t))$.
	Hence, $H^*_c(p^\minus([\frac{1}{2}, t]), p^\minus(t))=0$.
	Since there is an excision, $H^*_c \bigl (Y_+,p^\minus([t,\infty))\bigr ) \cong H^*_c(p^\minus([\frac{1}{2}, t]), p^\minus(t))$, the left hand side also vanishes.
	For any compact subset $C\subset Y_+$ we have that $Y_+-C\supset p^\minus([t,\infty])$ for large enough $t$; so,
	\[
		H^*_c(Y_+)=\lim _{t\to \infty} H^*_c \bigl (Y_+, p^\minus ([t,\infty))\bigr ),
	\]
	and by the previous discussion the right hand side vanishes.
	Hence, so does $H^*_c(Y_+)$.
	We have $Y_+\cup Y_-=\wt{T}_L$ and $Y_+\cap Y_- = Z_L$ and a Mayer--Vietoris sequence:
	\[
		0= H^*_c(Y_+)\oplus H^*_c(Y_-)\to H^*(Z_L)\to H^{*+1}_c(\wt{T}_L)\to 0.
	\]
	The theorem follows from the computation of $H^{*+1}_c(\wt{T}_L)$ in Theorem~\ref{t:salvetti} (or in Theorem~\ref{t:jm}).
\end{Remark}
\begin{Remark}
	Theorem~\ref{t:jm} provides a calculation of $H^*(A_L;\zz A_L)$ as a sum of terms involving the $H^{*-|J|}(K_J,\partial K_J)$, where $\partial K_J\cong \Lk(J)$.
	In the calculation of $L^2$-cohomology in Theorem~\ref{t:dl} only the term with $J=\emptyset$ enters.
	Hence, under the canonical map, $H^*(A_L;\zz A_L)\to L^2H^*(A_L)$, all the terms with $J\neq \emptyset$ go to $0$.
	Similarly, in Theorem~\ref{t:bb} we calculated $H^*(BB_L;\zz BB_L)$ as a sum of terms involving $H^{*-|J|+1}(K_J, \partial K_J)$.
	(Since $L$ is acyclic, the term with $J=\emptyset$ does not appear.) On the other hand, in Theorem~\ref{t:lraag} for $L^2H^*(BB_L)$ only the terms with $|J|=1$ occur.
	So, the canonical map $H^*(BB_L;\zz BB_L)\to L^2H^*(BB_L)$ takes all the terms with $|J|>1$ to $0$.
\end{Remark}
\begin{Remark}
	The cohomology of $BB_L$ with trivial coefficients was computed by Leary and Saadeto\u{g}lu in \cite{is}.
\end{Remark}

\subsection{Graph products of infinite groups}\label{ss:cgfree}
As in the Introduction, $\{G_s\}_{s\in S}$ is a family of groups and $G=\prod_\gG G_s$ is the graph product with respect to the simplicial graph $\gG$.
The associated flag complex is $L$.
For each $J$ in $\cs(L)$, $G_J$ denotes the direct product of the $G_s$ with $s\in J$.
In this subsection we shall also suppose that \emph{each $G_s$ is infinite}.
Put $Y=EG$.
As in Example~\ref{ex:EG}, $EG$ is the graph product of the $(EG_s, G_sb_s)$.
The cover $\cv=\{Y_J\}_{J\in \cs(L)}$, where $Y_J=G\times _{G_J} EG_J$, is a poset of spaces structure for $EG$.
Let $N$ stand for $\zz G$ or $\cn(G)$.
The spectral sequence of \S\ref{s:ss} converges to $H^*(G;N)$ and has $E_2$ term:
\[
	E_2^{j.k}=H^i(K;\ch^j(\cv)),
\]
where the coefficient system is given by $\ch^j(\cv)(\gs)=H^j(G_{\min(\gs)};N)$.
Once we verify that Condition (Z) holds, Lemma~\ref{l:main}, will provide the following calculations.
\begin{theorem}\label{t:gpfree}
	Let $G$ be a graph product of groups $G_s$, each of which is infinite.
	Then
	\[
		\grh^n(G;\zz G)=\bigoplus_{\substack{J\in \cs(L)\\
		i+j=n}} H^i(K_J,\partial K_J; H^j(G_J;\zz G)).
	\]
\end{theorem}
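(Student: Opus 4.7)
The plan is to apply the spectral sequence of Lemma~\ref{l:ss} (in its equivariant form, as noted at the beginning of \S\ref{s:comp}) to the $G$-equivariant poset of spaces $\cv = \{Y_J\}_{J\in \cs(L)}$ on $EG$, with $Y_J = G\times_{G_J} EG_J$. The intersection property (iii) of a poset of spaces holds because $\cs(L)$ is closed under intersection (since $L$ is a flag complex), and Shapiro's lemma identifies the coefficient system as $\ch^j(\cv)(\gs) = H^j_G(Y_{\min(\gs)};\zz G) = H^j(G_{\min(\gs)};\zz G)$. The theorem will follow from Lemma~\ref{l:main}(ii) once I verify Condition (Z).

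The main step is to show that for each $J\in \cs(L)$ the restriction
\[
H^*(G_J;\zz G) = H^*_G(Y_J;\zz G) \longrightarrow H^*_G(Y_{<J};\zz G)
\]
vanishes. By $G$-equivariance it suffices to work on a single component $E_0$ of $Y_J$. Using the graph product description of $EG$ from Example~\ref{ex:EG}, one identifies $E_0$ with $\prod_{s\in J}EG_s$ (a free $G_J$-space) and the intersection $Y_{<J}\cap E_0$ with the fat wedge
\[
\hat E_0 \;:=\; \bigcup_{s\in J}\Bigl(\prod_{t\in J\setminus\{s\}}EG_t\Bigr)\times G_s b_s,
\]
so that $(E_0,\hat E_0)$ is $G_J$-homeomorphic to the Cartesian product of pairs $\prod_{s\in J}(EG_s,G_s b_s)$.

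Passing to the $G_J$-quotient and invoking the K\"unneth theorem for pairs gives
\[
H^*(BG_J,\hat B_J;\zz G_J) \;\cong\; \bigotimes_{s\in J}\tilde H^*(BG_s;\zz G_s),
\]
where $\hat B_J$ denotes the fat wedge inside $BG_J=\prod_s BG_s$. Here the hypothesis enters: since each $G_s$ is infinite, $H^0(G_s;\zz G_s) = (\zz G_s)^{G_s} = 0$, so the reduction $\tilde H^*(BG_s;\zz G_s) = H^*(BG_s;\zz G_s)$ is trivial. Hence the natural map $H^*(BG_J,\hat B_J;\zz G_J)\to H^*(BG_J;\zz G_J)$ coincides with the K\"unneth isomorphism, and the long exact sequence of the pair forces $H^*(BG_J;\zz G_J)\to H^*(\hat B_J;\zz G_J)$ to be zero. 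Tensoring with the $G_J$-module splitting $\zz G\cong \zz G_J\otimes_\zz \zz(G_J\backslash G)$ (trivial action on the second factor) extends the vanishing to $\zz G$-coefficients, giving (Z). Lemma~\ref{l:main}(ii) then delivers the stated formula, upon recognizing $\flag(\cs(L)_{\ge J})=K_J$ and $\flag(\cs(L)_{>J})=\partial K_J$.

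I expect the main obstacle to be the careful identification of $(E_0,Y_{<J}\cap E_0)$ with $\prod_{s\in J}(EG_s, G_s b_s)$ as pairs of $G_J$-spaces, together with the bookkeeping in the equivariant K\"unneth argument with the non-constant coefficient system coming from $\zz G$. Once these are in place, the infinitude hypothesis collapses the reduced/unreduced distinction, and Condition (Z) reduces to the structural statement that the entire cohomology $H^*(BG_J;\zz G_J)$ is accounted for by the relative group $H^*(BG_J,\hat B_J;\zz G_J)$.
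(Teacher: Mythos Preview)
Your proposal is correct and follows essentially the same route as the paper: set up the poset of spaces $\{Y_J\}$ on $EG$, identify the coefficient system via Shapiro, and verify Condition~(Z) by a K\"unneth argument exploiting $H^0(G_s;\zz G_s)=0$ for infinite $G_s$. The paper isolates this last step as Lemma~\ref{l:Kunneth} and proves it by induction on $|J|$ using $(Y_J,Y_{<J})=(Y_{J-s},Y_{<(J-s)})\times (Y_s,G_sb_s)$, whereas you go directly to the $|J|$-fold product of pairs; these are equivalent.

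One small correction: the map $H^*(BG_J,\hat B_J;\zz G_J)\to H^*(BG_J;\zz G_J)$ is not an isomorphism (in degree~$1$ the relative group is larger by a copy of $\zz G_s$ for each factor), but it is \emph{surjective}, which is all you need for the long exact sequence to force the restriction to $\hat B_J$ to vanish. The paper phrases this more carefully using the K\"unneth formula with coefficients rather than a tensor product, which also sidesteps any Tor-term concerns.
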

\noindent (Note that $H^j(G_J;\zz G)= H^j(G_J;\zz G_J)\otimes _{G_J}\zz G$.)
\begin{theorem}\label{t:lgraph}
	Let $G$ be a graph product of groups $G_s$, each of which is infinite.
	Then
	\[
		L^2b^n(G)=\sum_{\substack{J\in \cs(L)\\
		i+j=n}} b^i(K_J,\partial K_J)\cdot L^2b^j(G_J).
	\]
\end{theorem}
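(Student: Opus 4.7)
The plan is to apply the spectral sequence of Lemma~\ref{l:main} with coefficient module $N=\cN(G)$, exactly as in the setup used for Theorem~\ref{t:gpfree}, and then extract the $L^2$-Betti numbers by taking $\cN(G)$-dimensions. Take $EG=\prod_\gG(EG_s,G_sb_s)$ as in Example~\ref{ex:EG} and cover it by the $G$-equivariant poset of spaces $\cv=\{Y_J\}_{J\in\cs(L)}$ with $Y_J:=G\times_{G_J}EG_J$. Shapiro's lemma identifies $H^j_G(Y_J;\cN(G))=H^j(G_J;\cN(G))$, so the coefficient system at a simplex $\gs$ is $H^j(G_{\min(\gs)};\cN(G))$.

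The central step is to establish condition (Z) of \S\ref{s:ss}. For $I\subsetneq J$ in $\cs(L)$, sphericality of $J$ implies that generators of $G_I$ and $G_{J\setminus I}$ commute, so $G_J=G_I\times G_{J\setminus I}$ with $G_{J\setminus I}$ a nonempty direct product of infinite groups. The restriction $H^*(G_J;\cN(G))\to H^*(G_I;\cN(G))$ arises from the inclusion of the first factor; using the K\"unneth formula for direct products of groups, it factors through the augmentation $H^q(G_{J\setminus I};\cN(G_{J\setminus I}))\to\cN(G_{J\setminus I})$, which is zero for $q>0$ and for $q=0$ has image contained in $H^0(G_{J\setminus I};\cN(G_{J\setminus I}))=\cN(G_{J\setminus I})^{G_{J\setminus I}}$. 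Since $G_{J\setminus I}$ is infinite, $L^2b^0(G_{J\setminus I})=\dim_{\cN(G_{J\setminus I})}H^0(G_{J\setminus I};\cN(G_{J\setminus I}))=0$, so the restriction is zero in von Neumann dimension, which is the form of (Z) we require.

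By Lemma~\ref{l:main}(ii), interpreted via L\"uck's dimension function (which is exact on arbitrary $\cN(G)$-modules \cite{luck}), the spectral sequence collapses dimensionally at $E_2$ and
\[
L^2b^n(G)=\sum_{\substack{J\in\cs(L)\\i+j=n}}\dim_{\cN(G)}H^i(K_J,\partial K_J;H^j(G_J;\cN(G))).
\]
Since $(K_J,\partial K_J)$ is a finite CW pair, cellular flatness gives $\dim_{\cN(G)}H^i(K_J,\partial K_J;M)=b^i(K_J,\partial K_J)\cdot\dim_{\cN(G)}M$ for any $\cN(G)$-module $M$, and the induction $\cN(G_J)\hookrightarrow\cN(G)$ preserves von Neumann dimension, so $\dim_{\cN(G)}H^j(G_J;\cN(G))=L^2b^j(G_J)$. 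Combining these identities yields the claimed formula. The main obstacle is the dimension-theoretic refinement of condition (Z): the restriction map in question need not literally vanish, only its image has dimension zero, so one must verify that the collapse argument of Lemma~\ref{l:main}(ii) still runs at the level of von Neumann dimensions---this is where L\"uck's exactness of $\dim_{\cN(G)}$ does the work, ensuring that dim-zero images of higher differentials force $\dim E_r^{i,j}=\dim E_\infty^{i,j}$ for all $r\ge 2$.
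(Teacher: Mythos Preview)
Your overall strategy matches the paper's: run the spectral sequence of Lemma~\ref{l:main} on the poset of spaces $\{Y_J\}_{J\in\cs(L)}$ with coefficients in $\cN(G)$, verify condition (Z), and read off dimensions. There are, however, two points where your execution diverges from (and is weaker than) the paper's argument.

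First, you downgrade condition (Z) to a ``dimension-zero'' statement and then appeal to L\"uck's exactness to salvage the collapse of the spectral sequence. This is unnecessary. For any infinite group $H$, the module $H^0(H;\cN(H))=\cN(H)^H$ is literally zero, not merely of dimension zero: if $\varphi\in\cN(H)$ satisfies $e_h\varphi=\varphi$ for all $h$, then $\varphi(e_1)\in L^2(H)$ is left-$H$-invariant, hence $0$, and since $\varphi$ commutes with the right action this forces $\varphi=0$. Consequently the relevant restriction maps vanish on the nose, exactly as in the paper's Lemma~\ref{l:Kunneth}, and Lemma~\ref{l:main}\,(ii) applies directly without any dimension-theoretic refinement.

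Second, and more seriously, what you actually argue is condition (Z$'$): vanishing of $H^*(G_J;\cN(G))\to H^*(G_I;\cN(G))$ for each proper $I\subsetneq J$. Lemma~\ref{l:main}\,(ii) requires condition (Z): vanishing of $H^*_{G_J}(Y_J;\cN(G))\to H^*_{G_J}(Y_{<J};\cN(G))$, where $Y_{<J}=\bigcup_{s\in J}Y'_{J-s}$. These are not the same, and (Z$'$) does not formally imply (Z). The paper handles this in Lemma~\ref{l:Kunneth}\,(ii) by an induction on $|J|$: one writes $(Y_J,Y_{<J})=(Y_{J-s},Y_{<(J-s)})\times(Y_s,G_sb_s)$ and combines the inductive hypothesis with the surjectivity of $H^*(Y_s,G_sb_s)\to H^*(Y_s)$ via the relative K\"unneth formula. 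You should supply this step rather than conflate (Z$'$) with (Z).
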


To establish Theorem~\ref{t:gpfree} we need to verify conditions (Z$'$) and (Z) (or in fact just condition (Z)) which precedes Lemma~\ref{l:main}.
These conditions follow from statements (i) and (ii), respectively, in the next lemma.
\begin{lemma}\label{l:Kunneth}
	Suppose $G_J=\prod_{s\in J}G_s$ is the direct product of a collection of infinite groups indexed by a finite set $J$.
	Let $\{Y_s\}_{s\in J}$ be a collection of connected CW complexes with proper $G_s$-actions such that each $Y_s$ contains a free orbit, $G_sb_s$, and put $Y_J:=\prod_{s\in J} Y_s$.
	As in \S\ref{ss:spaces}, for each $I\subset J$, define
	\[
		Y'_I:=\prod_{s\in I} Y_s \times \prod_{s\in J-I} G_s b_s.
	\]
	Let $N$ stand for either $\zz G_J$ or $\cN(G_J)$.
	Then

	(i) The map induced by inclusion, $H^*_{G_J}(Y_J;N)\to H^*_{G_J}(Y'_I;N)$, is the zero map.

	(ii) More generally, if
	\[
		Y_{<J}:=\bigcup_{s\in J} Y'_{J-s},
	\]
	then the map induced by inclusion, $H^*_{G_J}(Y_J;N)\to H^*_{G_J}(Y_{<J};N)$, is the zero map.
\end{lemma}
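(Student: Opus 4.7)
The strategy is to reduce both parts to a factor-by-factor computation on the product and exploit the single key vanishing
\[
H^0_{G_s}(Y_s;N_s)\;=\;N_s^{G_s}\;=\;0,\qquad N_s\in\{\zz G_s,\cN(G_s)\},
\]
which holds whenever $G_s$ is infinite. For $N_s=\zz G_s$ this is immediate since the only left-invariant elements of $\zz G_s$ under an infinite group are $0$; for $N_s=\cN(G_s)$ it follows because the trace-induced injection $\cN(G_s)\hookrightarrow \ell^2(G_s)$ carries $G_s$-invariants into $\ell^2(G_s)^{G_s}=0$. The identification $H^0_{G_s}(Y_s;N_s)=N_s^{G_s}$ uses only the connectedness of $Y_s$.

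The next step is to record an equivariant K\"unneth isomorphism. Since $G_J=\prod_s G_s$ and $Y_J=\prod_s Y_s$, the cellular chains split as $C_*(Y_J)=\bigotimes_s C_*(Y_s)$ as $G_J$-modules, and the coefficient module factors as $\zz G_J=\bigotimes_s \zz G_s$ (as $G_J$-bimodules) or $\cN(G_J)=\bar\otimes_s\cN(G_s)$ (spatial tensor product of von Neumann algebras). Combining these one obtains, at the cochain level,
\[
C^*_{G_J}(Y_J;N)\;\simeq\;\bigotimes_s C^*_{G_s}(Y_s;N_s),
\]
with analogous identifications for each $Y'_I$ and for the pair $(Y_J,Y_{<J})$. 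Possible Tor terms in the K\"unneth spectral sequence can be handled in the $\zz G$ case by replacing each $Y_s$ with $Y_s\times EG_s$ to make the action free; in the $\cN$ case one uses L\"uck's dimension theory, which kills Tor dimension-theoretically.

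For part (i), pick $s_0\in J-I$ and factor the inclusion as $Y'_I\hookrightarrow Y'_{J-\{s_0\}}\hookrightarrow Y_J$. It suffices to show the second inclusion induces zero in $H^*_{G_J}(-;N)$. Under the K\"unneth decomposition this restriction becomes $\mathrm{id}\otimes r_{s_0}$, where $r_{s_0}\colon H^*_{G_{s_0}}(Y_{s_0};N_{s_0})\to H^*_{G_{s_0}}(G_{s_0}b_{s_0};N_{s_0})$. The target equals $N_{s_0}$ in degree $0$ and vanishes in higher degrees (as $G_{s_0}b_{s_0}$ is a free $0$-dimensional orbit), while in degree $0$ the map is the inclusion of $H^0_{G_{s_0}}(Y_{s_0};N_{s_0})=N_{s_0}^{G_{s_0}}=0$ into $N_{s_0}$. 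Thus $r_{s_0}=0$, which yields (i).

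For part (ii), observe the product-of-pairs identity
\[
(Y_J,Y_{<J})\;=\;\prod_s (Y_s,G_sb_s),
\]
which holds by construction of $Y_{<J}=\bigcup_s Y'_{J-\{s\}}$. The relative K\"unneth then gives $C^*_{G_J}(Y_J,Y_{<J};N)\simeq \bigotimes_s C^*_{G_s}(Y_s,G_sb_s;N_s)$. The long exact sequence of each pair $(Y_s,G_sb_s)$, together with the single-factor version of (i), shows that $H^*_{G_s}(Y_s,G_sb_s;N_s)\to H^*_{G_s}(Y_s;N_s)$ is surjective. Tensoring yields surjectivity of $H^*_{G_J}(Y_J,Y_{<J};N)\to H^*_{G_J}(Y_J;N)$, which by exactness of the long exact sequence of the pair $(Y_J,Y_{<J})$ forces $H^*_{G_J}(Y_J;N)\to H^*_{G_J}(Y_{<J};N)$ to be zero, proving (ii). The principal technical obstacle throughout is making the equivariant K\"unneth isomorphism precise for $N=\cN(G_J)$; once that is set up, the rest is a mechanical application of the long exact sequence and the infinite-group vanishing $N_s^{G_s}=0$.
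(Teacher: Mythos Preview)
Your proposal is correct and rests on the same ingredients as the paper's proof: the relative K\"unneth formula for equivariant cohomology, the vanishing $H^0_{G_s}(Y_s;N_s)=0$ for infinite $G_s$, and the standard translation between ``restriction is zero'' and ``relative $\to$ absolute is onto.'' The organization differs in two small ways. For (i), you factor through the codimension-one piece $Y'_{J-\{s_0\}}$ and reduce to the single-factor restriction $r_{s_0}$, whereas the paper works directly with the pair $(Y_{J-I},G_{J-I}b)$ and shows the corresponding relative-to-absolute map is onto. For (ii), you use the full product-of-pairs identity $(Y_J,Y_{<J})=\prod_s(Y_s,G_sb_s)$ at once, tensoring the single-factor surjections; the paper instead peels off one factor, writing $(Y_J,Y_{<J})=(Y_{J-s},Y_{<(J-s)})\times(Y_s,G_sb_s)$, and proceeds by induction on $|J|$. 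Your route is slightly more direct; the paper's inductive two-factor split keeps the K\"unneth step in its most familiar form. You are also more explicit than the paper about the Tor issues in K\"unneth and the $\cN(G_J)$ case, which the paper simply declares ``entirely similar.''
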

\begin{proof}
	We shall prove this only in the case $N=\zz G_J$, the case $N=\cN(G_J)$ being entirely similar.
	The relative version of the K\"unneth Formula states that for pairs of spaces $(A,B)$ and $(A',B')$,
	\[
		H^n((A,B)\times (A',B'))=\bigoplus_{i+j=n} H^i(A,B;H^j(A',B')),
	\]
	(where $(A,B)\times (A',B')=(A\times A', (A\times B' )\cup (B\times A'))$ ).
	Similarly, if $(A,B)$ is a pair of $H$-spaces and $(A',B')$ a pair of $H'$-spaces, then for left $H$- and $H'$-modules $M$ and $M'$,
	\[
		H^n_{H\times H'}((A,B)\times (A',B');M\otimes M')= \bigoplus_{i+j=n} H^i_H(A,B;M\otimes H^j_{H'}(A',B';M')),
	\]
	By the exact sequence of the pair, showing $H^*_{G_J}(Y_J;\zz G_J) \to H^*_{G_J}(Y'_I;\zz G_J)$ is zero is equivalent to showing that
	\begin{equation}\label{e:pfK}
		H^*_{G_J}(Y_I\times (Y_{J-I},G_{J-I} b);M\otimes M')\to H^*_{G_J}(Y_I\times Y_{J-I};M \otimes M')
	\end{equation}
	is onto, where $M=\zz G_I$, $M'=\zz G_{J-I}$ and $b\in Y_{J-I}$ is a basepoint.
	If $J-I\neq \emptyset$, then since $G_{J-I}$ is infinite and acts properly, $Y_{J-I}$ is noncompact; hence, $H^0_{G_{J-I}}(Y_{J-I};M')=0$ and so \[H^j_{G_{J-I}}(Y_{J-I},G_{J-I}b;M')\to H^j_{G_{J-I}}(Y_{J-I};M')\] is onto.
	Hence, \[H^i_{G_I}(Y_I; M\otimes H^j_{G_{J-I}}(Y_{J-I},G_{J-I}b;M'))\to H^i_{G_I}(Y_I;M\otimes H^j_{G_{J-I}}(Y_{J-I};M'))\] is onto.
	It follows from the relative K\"unneth Formula that the map in \eqref{e:pfK} is onto.

	The proof of the second statement is similar using induction on the cardinality of $J$.
	Choose $s\in J$.
	Then $Y_{<J}= (Y_{<(J-s)} \times Y_s) \cup (Y_{J-s}\times G_s b_s)$.
	Hence,
	\[
		(Y_J,Y_{<J})=(Y_{J-s}, Y_{<(J-s)})\times (Y_s,G_s b_s).
	\]
	Let $M=\zz G_{J-s}$ and $M'=\zz G_s$.
	Then
	\[H^j_{G_{s}}(Y_s,G_sb_s;M')\to H^j_{G_{s}}(Y_s;M')\] is onto by the argument in the previous paragraph, and
	\[
		H^i_{G_{J-s}}(Y_{J-s}, Y_{<(J-s)};M)\to H^i_{G_{J-s}}(Y_{J-s};M)
	\]
	is onto by inductive hypothesis.
	Combining these two surjections, we see that
	\[
		H^i_{G_{J-s}}(Y_{J-s}, Y_{<(J-s)};M\otimes H^j_{G_s}(Y_s,G_s b_s;M'))\to H^i_{G_{J-s}}(Y_{J-s};M\otimes H^j_{G_{s}}(Y_s;M'))
	\]
	is onto.
	So, by the relative K\"unneth Formula,
	\[
		H^n_{G_{J-s}\times G_s}((Y_{J-s}, Y_{<(J-s)})\times (Y_s,G_s b_s);\zz G_J)\to H^n_{G_{J-s}\times G_s}(Y_{J-s}\times Y_s;\zz G_J)
	\]
	is onto, which completes the proof.
\end{proof}

\paragraph{Other coefficients.} As before, $G=\prod_\gG G_s$ is the graph product and $G'=\prod_{s\in S} G_s$ is the direct product.
Let $p:G\to G'$ be the natural projection.
We say that a group $H$ \emph{lies between $G$ and $G'$} if $H=G/N$ for some normal subgroup $N\subset G$ with $\Ker p \subseteq N$.
If this is the case, then $p$ factors as
\[
	G\,\mapright{f}\,H\,\mapright{}\, G'
\]
where $f$ is the natural epimorphism.
In this way $\zz H$ becomes a $G$-module.
There is the following generalization of Theorem~\ref{t:gpfree}.
\begin{theorem}\label{t:other}
	Suppose each $G_s$ is infinite and $H$ lies between $G$ and $G'$.
	Then
	\[
		\grh^n(G;\zz H)=\bigoplus_{\substack{J\in \cs(L)\\
		i+j=n}} H^i(K_J,\partial K_J; H^j(G_J;\zz H))
	\]
	where as before, $H^j(G_J;\zz H)=H^j(G_J;\zz G_J)\otimes _{G_J} \zz H$.
	Similarly,
	\[
		L^2b^n((EG)/N;H)=\sum_{\substack{J\in \cs(L)\\
		i+j=n}} b^i(K_J,\partial K_J)\cdot L^2b^j(G_J).
	\]
\end{theorem}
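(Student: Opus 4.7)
The plan is to generalize the proof of Theorem~\ref{t:gpfree} essentially verbatim, with $\zz H$ (resp.\ $\cN(H)$) playing the role that $\zz G$ (resp.\ $\cN(G)$) played there. We use the same poset of spaces $\cv = \{Y_J\}_{J\in\cs(L)}$ on $EG$, where $Y_J = G\times_{G_J} EG_J$. Viewing $\zz H$ as a $G$-module via the surjection $G \twoheadrightarrow H$ (and hence as a $G_J$-module by restriction), the spectral sequence of Lemma~\ref{l:ss} converges to $H^*(G;\zz H)$ with $E_1^{i,j} = C^i(K;\ch^j(\cv))$, where $\ch^j(\cv)(\gs) = H^j_G(Y_{\min\gs};\zz H) = H^j(G_{\min\gs};\zz H)$. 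By Lemma~\ref{l:main}, to obtain the stated decomposition of $\grh^n(G;\zz H)$ it suffices to verify Condition~(Z).

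The crux is proving the analogue of Lemma~\ref{l:Kunneth}(ii) with the $G_J$-module $N = \zz H$. Running through the argument there, the only step that uses the specific form $N = \zz G_J$ is the vanishing $H^0_{G_s}(Y_s;\zz G_s) = (\zz G_s)^{G_s} = 0$, which uses that $G_s$ acts on $\zz G_s$ with infinite orbits. With $\zz H$ in place of $\zz G_s$, we need the $G_s$-action on $H$ (by left translation through $G_s\hookrightarrow G\to H$) to have infinite orbits, i.e.\ the image of $G_s$ in $H$ to be infinite. This is where the hypothesis that $H$ lies between $G$ and $G'$ enters: the composition $G_s\hookrightarrow G \to H\to G'$ is just the inclusion $G_s\hookrightarrow G'$, which is injective, so the image of $G_s$ in $H$ surjects onto the infinite group $G_s$ and is therefore infinite. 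The Künneth splitting $\zz G_J = \zz G_I \otimes \zz G_{J-I}$ used in Lemma~\ref{l:Kunneth} is not needed in the same way: since we only need to factor out a single coordinate $s\in J$ at a time, the relevant inductive step uses $(Y_J, Y_{<J}) = (Y_{J-s}, Y_{<(J-s)}) \times (Y_s, G_sb_s)$ together with the relative Künneth formula for equivariant cochains (applied with the factors of $G_J = G_{J-s}\times G_s$ acting trivially on the opposite $Y$), which only requires the vanishing of $H^0_{G_s}(Y_s;\zz H)$ established above.

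For the $L^2$-Betti number assertion, the same plan runs with $\zz H$ replaced by $\cN(H)$ throughout, observing that $(\cN(H))^{G_s} = 0$ for the same reason, so that Condition~(Z) again holds and the spectral sequence degenerates at $E_2$. The main subtlety here is identifying the summand contribution: one must verify that $\dim_{\cN(H)} H^j_{G_J}(EG_J;\cN(H)) = L^2 b^j(G_J)$, which follows from L\"uck's dimension-preserving base change for the inclusion of von Neumann algebras induced by $G_J \to H$ (or equivalently, from the fact that $\cN(H)$ is a faithfully flat/dimension-compatible module over $\cN(G_J)$ in the appropriate sense). The main obstacle I anticipate is bookkeeping this last identification cleanly, since $G_J$ need not be a direct factor of $H$; everything else is a straightforward transcription of the proofs of Theorems~\ref{t:gpfree} and \ref{t:lgraph}.
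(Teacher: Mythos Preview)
Your proposal is correct and follows essentially the same approach as the paper, which gives only a one-sentence sketch: ``We have $H^n(G;\zz H)=H^n_H((EG)/N)$ and $(EG)/N$ is covered by complexes of the form $H\times_{G_J} EG_J$. The proof then goes through in the same manner as that of Theorem~\ref{t:gpfree}.'' The only cosmetic difference is that the paper passes to the quotient $(EG)/N$ and works $H$-equivariantly, whereas you stay on $EG$ with $\zz H$ as a $G$-module; since $H^*_G(EG;\zz H)=H^*_H((EG)/N;\zz H)$ and $H^*_G(G\times_{G_J}EG_J;\zz H)=H^*_H(H\times_{G_J}EG_J;\zz H)=H^*(G_J;\zz H)$, the two spectral sequences coincide term by term.

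One simplification worth noting: your key observation that $G_J\hookrightarrow H$ is injective (because $G_J\to H\to G'$ is) already implies that $\zz H$ is \emph{free} as a left $\zz G_J$-module, so Condition~(Z) for $\zz H$-coefficients follows immediately from Lemma~\ref{l:Kunneth}(ii) with $N=\zz G_J$ by tensoring---no need to rerun the K\"unneth argument. Your identification of the $L^2$ subtlety (that $\dim_{\cN(H)} H^j(G_J;\cN(H))=L^2b^j(G_J)$ via L\"uck's induction/restriction for von Neumann dimension along the injection $G_J\hookrightarrow H$) is exactly the point the paper leaves implicit.
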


We have $H^n(G;\zz H)=H^n_H((EG)/N)$ and $(EG)/N$ is covered by complexes of the form $H\times_{G_J} EG_J$.
The proof then goes through in the same manner as that of Theorem~\ref{t:gpfree}.

\section{Graph products of Coxeter groups}\label{s:gpcg}
\subsection{Polyhedral joins}\label{ss:pj}
As in \S\ref{ss:poset} and \S\ref{ss:spaces}, let $L$ be a simplicial complex with vertex set $S$.
For each $s\in S$, suppose given a simplicial complex $\cl(s)$ with vertex set $T_s$.
For each $J\in \cs(L)$, define $\cl(J)$ to be the join,
\begin{equation}\label{e:join}
	\cl(J):=\Aster_{s\in J}\ \cl(s),
\end{equation}
and then define the \emph{polyhedral join} of the $\cl(s)$ with respect to $L$ by
\begin{equation}\label{e:pjoin}
	\aster_L\ \cl(s):=\bigcup_{J\in \cs(L)} \cl(J).
\end{equation}
To simplify notation put $\cl:=\aster_L\ \cl(s)$.
Here is an equivalent definition.
Let $T$ denote the disjoint union
\[
	T:=\bigcup_{s\in S} T_s.
\]
and $\pi:T\to S$ the natural projection.
Any subset $I$ of $T$ can be decomposed as
\[
	I=\bigcup_{s\in \pi(I)} I_s,
\]
where $I_s\subseteq T_s$.
Then $I$ is the vertex set of a simplex in $\cl$ if and only if \( \pi(I)\in \cs(L), \) and $I_s\in \cs(\cl(s))$ for each $s\in S$.
In other words, a simplex $I$ of $\cl$ is determined by a simplex $J\in \cs(L)$ and a collection of simplices $\{I_s\}_{s\in J}$, where each $I_s\in \cs(\cl(s))$.
\begin{Remark}
	Similarly, given any family of spaces $\{X(s)\}_{s\in S}$, for each $J\in \cs(L)$, define $X(J)$ to be the join of the $X(s)$ and the \emph{polyhedral join}, $\aster_L X(s)$, to be the union of the $X(J)$ as in \eqref{e:pjoin}.
\end{Remark}

Recall that the notion of ``polyhedral product'' was defined by \eqref{e:prod'}.
The proof of the next lemma is straightforward.
\begin{lemma}\label{l:commute}
	The operation of applying $K$ to simplicial complexes intertwines the polyhedral join with the polyhedral product (defined in \S\ref{ss:spaces}), i.e.,
	\[
		K(\aster_L\ \cl(s))=\piprod_L \bigl( K(\cl(s)),\partial K(\cl(s)) \bigr).
	\]
\end{lemma}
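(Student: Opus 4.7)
I would establish a canonical homeomorphism by setting up a bijection between cells on the two sides indexed by $\cs(L)$ and verifying that the gluings match.

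First, I would use the combinatorial description of $\aster_L\cl(s)$ recalled just above the lemma. The poset $\cs(\aster_L\cl(s))$ is canonically isomorphic, via $I\mapsto (I_s)_{s\in S}$ with $I_s:=I\cap T_s$, to the subposet of $\prod_{s\in S}\cs(\cl(s))$ (with componentwise order) consisting of those tuples whose support $\{s:I_s\neq\emptyset\}$ lies in $\cs(L)$. Grouping simplices of $K(\aster_L\cl(s))$ by the smallest $J\in\cs(L)$ containing the supports appearing in their defining chain yields a natural decomposition
\[
K(\aster_L\cl(s)) \;=\; \bigcup_{J\in\cs(L)} K(\cl(J)),\qquad \cl(J):=\Aster_{s\in J}\cl(s),
\]
and, for $J'\subseteq J$ in $\cs(L)$, the inclusion $K(\cl(J'))\hookrightarrow K(\cl(J))$ is precisely the one obtained by padding tuples with $I_s=\emptyset$ for $s\in J\setminus J'$.

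Next, I would identify each piece $K(\cl(J))$ as a natural product. Since simplices in a join decompose uniquely into a tuple of simplices in the factors, $\cs(\cl(J))=\prod_{s\in J}\cs(\cl(s))$ as posets. Now invoke the standard fact that the order complex of a finite product of posets is the canonical ``staircase'' simplicial subdivision of the product of their order complexes; this gives a natural homeomorphism
\[
K(\cl(J))\;\cong\;\prod_{s\in J} K(\cl(s)).
\]
Naturality in $J$ is immediate from the poset description: for $J'\subseteq J$ the left-hand inclusion becomes the inclusion of the subproduct obtained by fixing each extra coordinate at the distinguished basepoint of $K(\cl(s))$, namely the vertex corresponding to $\emptyset\in\cs(\cl(s))$, in exact agreement with the padding map above.

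Finally, I would match this with the polyhedral product side. By definition,
\[
\piprod_L\bigl(K(\cl(s)),\partial K(\cl(s))\bigr)=\bigcup_{J\in\cs(L)} Z'_J,\quad Z'_J=\prod_{s\in J}K(\cl(s))\times\prod_{s\notin J}\partial K(\cl(s)),
\]
and for $J'\subseteq J$ the inclusion $Z'_{J'}\subseteq Z'_J$ is the obvious subproduct inclusion collapsing the extra factors onto basepoints in $\partial K(\cl(s))$, exactly analogous to the identification $\piprod_L([0,1],0)\cong K$ of Example~\ref{ex:01}. Combining with the identification $K(\cl(J))\cong\prod_{s\in J}K(\cl(s))$ of the previous step gives a piecewise homeomorphism $K(\cl(J))\to Z'_J$, and the compatibility of the two systems of inclusions ensures these assemble into a global homeomorphism. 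The only substantive point to verify is the naturality of the staircase triangulation under the inclusions $\prod_{s\in J'}\cs(\cl(s))\hookrightarrow\prod_{s\in J}\cs(\cl(s))$, which is a routine poset-level inspection; this is what the author has in mind when calling the proof ``straightforward.''
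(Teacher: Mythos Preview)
Your decomposition of the left side is correct: $K(\aster_L\cl(s))=\bigcup_{J\in\cs(L)}K(\cl(J))$ with $K(\cl(J))\cong\prod_{s\in J}K(\cl(s))$, and for $J'\subseteq J$ the inclusion fixes the extra coordinates at the cone point (the vertex $\emptyset\in\cs(\cl(s))$). What this actually establishes is $K(\aster_L\cl(s))=\piprod_L\bigl(K(\cl(s)),*_s\bigr)$ with $*_s$ the cone point --- the direct generalization of Example~\ref{ex:01}. The final step, matching this with $\piprod_L\bigl(K(\cl(s)),\partial K(\cl(s))\bigr)$, is where the argument breaks. The polyhedral-product piece is $Z'_J=\prod_{s\in J}K(\cl(s))\times\prod_{s\notin J}\partial K(\cl(s))$, and the extra factors $\partial K(\cl(s))$ are \emph{not} single points in general. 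Your claim that the inclusion $Z'_{J'}\subseteq Z'_J$ ``collapses the extra factors onto basepoints'' is simply false: for $s\in J\setminus J'$ the $s$-coordinate in $Z'_{J'}$ ranges over all of $\partial K(\cl(s))$. So your ``piecewise homeomorphism $K(\cl(J))\to Z'_J$'' is not even a map between spaces of the same dimension.

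In fact the lemma as literally stated appears to be in error. Take $L$ to be two vertices $s,t$ with no edge and let $\cl(s),\cl(t)$ each be a pair of isolated points. Then $\aster_L\cl(s)$ is four isolated points, so the left side is the cone on four points, which is contractible; on the other hand $K(\cl(s))\cong[-1,1]$ with $\partial K(\cl(s))=\{\pm 1\}$, and one computes $\piprod_L\bigl([-1,1],\{\pm 1\}\bigr)=\partial\bigl([-1,1]^2\bigr)\cong S^1$. The paper gives no proof (``straightforward''), and the structure it actually uses downstream --- the poset of spaces $\{K(\cl(J))\}_{J\in\cs(L)}$ covering $K(\cl)$ --- is exactly the cone-point version you have proved.
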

The following lemma is a well-known consequence of the K\"unneth Formula.
\begin{lemma}\label{l:join}
	Given two spaces $A$ and $B$,
	\[
		\rh^n(A\ast B)=\bigoplus_{i+j=n-1} \rh^i(A;\rh^j(B)).
	\]
	Also, $\rh^*(A\ast B)\to \rh^*(A)$ is the zero map whenever $B$ is nonempty.
	(Here $\rh^*(\ )$ means reduced cohomology.
	Also, we follow the convention that the reduced cohomology of the empty set is $\zz$ in degree $-1$.)
\end{lemma}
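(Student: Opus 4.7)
The plan is to apply Mayer--Vietoris to the standard decomposition $A * B = (CA \times B) \cup (A \times CB)$, where the two pieces meet in $A \times B$. Since each cone is contractible, projection gives deformation retractions $CA \times B \simeq B$ and $A \times CB \simeq A$. Under these identifications the restriction map
\[
\rh^n(A) \oplus \rh^n(B) \longrightarrow \rh^n(A \times B)
\]
in the Mayer--Vietoris sequence is $\alpha \oplus \beta \mapsto \alpha \times 1_B + 1_A \times \beta$, which by the K\"unneth Formula is a split injection onto the ``pure'' Künneth summands of the product.

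Consequently the long exact sequence breaks into split short exact sequences, and the connecting homomorphism identifies $\rh^n(A * B)$ with the cokernel of the injection above in degree $n - 1$. The K\"unneth Formula (with universal coefficients used to repackage tensor/Tor pairs as cohomology with coefficients) describes this cokernel as exactly $\bigoplus_{i+j = n-1} \rh^i(A;\rh^j(B))$. The convention $\rh^{-1}(\emptyset) = \zz$ takes care of the boundary indices where $A$ or $B$ is empty; for instance, $\emptyset * B = B$ is recovered by the $(i,j) = (-1,n)$ term.

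For the second assertion I would give a direct geometric argument, which also implies the injectivity used above: any choice of point $b \in B$ gives a factorization $A \hookrightarrow A * \{b\} \hookrightarrow A * B$, and $A * \{b\}$ is the cone on $A$, hence contractible. Thus $A \hookrightarrow A * B$ is null-homotopic, so the induced map $\rh^*(A * B) \to \rh^*(A)$ is zero. The only real bookkeeping subtlety is verifying that the convention on $\rh^{-1}(\emptyset)$ makes the K\"unneth rewrite handle all boundary cases consistently; apart from that, the proof is entirely standard Mayer--Vietoris plus K\"unneth.
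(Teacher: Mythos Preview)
Your argument is correct and reaches the same conclusion, but the route differs from the paper's in both parts. For the first formula, the paper applies the relative K\"unneth Formula directly to the pair product $(CA,A)\times(CB,B)=(CA\times CB,\,A\ast B)$, using contractibility of $CA\times CB$ to identify $\rh^n(A\ast B)$ with $H^{n+1}$ of that pair product in one step; you instead run Mayer--Vietoris on the closed cover $A\ast B=(CA\times B)\cup(A\times CB)$ and then invoke K\"unneth on $A\times B$ to identify the cokernel. These are two sides of the same coin (indeed your cokernel is exactly $\rh^{n-1}(A\wedge B)$, which the relative K\"unneth on $(A,\ast)\times(B,\ast)$ computes), so neither approach is really shorter, though the paper's avoids the extra step of checking that the Mayer--Vietoris restriction map is split injective. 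For the second assertion your proof is genuinely simpler: the paper argues by showing that $H^*(A\ast B,\,A\times CB)\to\rh^*(A\ast B)$ is surjective via another relative K\"unneth computation, whereas your observation that $A\hookrightarrow A\ast\{b\}\cong CA\hookrightarrow A\ast B$ exhibits the inclusion as null-homotopic is a one-line geometric argument. The only place to be slightly careful is your ``repackaging'' of tensor/Tor terms into cohomology with coefficients---this is exactly the form of K\"unneth the paper also uses without comment, so you are on equal footing there.
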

\begin{Remark}
	If we take coefficients in a field $\FF$, the formula in lemma~\ref{l:join} reads
	\[
		\rh^n(A\ast B;\FF)=\bigoplus_{i+j=n-1} \rh^i(A;\FF)\otimes \rh^j(B;\FF)
	\]
	Similarly, for the $J$-fold join, $X(J)$, of $\{X(s)\}_{s\in J}$,
	\[
		\rh^n(X(J);\FF)=\bigoplus_{\sum i_s=n-|J|-1}\ \bigotimes_{s\in J} \rh^{i_s}(X(s);\FF).
	\]
\end{Remark}
\begin{proof}[Proof of Lemma~\ref{l:join}] Write $CA$ and $CB$ for the cones on $A$ and $B$, respectively.
	Then $(CA,A)\times (CB,B)= (CA\times CB, A\ast B)$.
	Since $CA\times CB$ is contractible, $\rh^n(A\ast B)=H^{n+1}((CA,A)\times (CB,B))$.
	Hence,
	\begin{align*}
		\rh^n(A\ast B)&=H^{n+1}((CA,A)\times (CB,B))\\
		&=\bigoplus_{i+j+2=n+1} H^{i+1}(CA,A;H^{j+1}(CB,B))\\
		&=\bigoplus_{i+j=n-1} \rh^i(A;\rh^j(B)),
	\end{align*}
	where the second equation is the relative K\"unneth Formula.
	This proves the first sentence.
	To prove the second, note that the subspace $A\ast\emptyset\subset A\ast B$ is homotopy equivalent to $A\times CB\subset A\ast B$.
	So, by the exact sequence of the pair, we need only show $H^*(A\ast B, A\times CB) \to \rh^*(A\ast B)$ is onto.
	We have
	\begin{align*}
		H^n(A\ast B, A\times CB)&=H^n(CA\times B, A\times B)=H^n((CA,A)\times B)\\
		&=\bigoplus_{i+j=n} H^i(CA,A;H^j(B)).
	\end{align*}
	Since the connecting homomorphism $\rh^j(B)\to H^{j+1}(CB,B)$ is an isomorphism, it follows that $H^n(A\ast B,A)\to \rh^n(A\ast B)$ is onto.
\end{proof}

For any $J\in \cs(L)$, put
\begin{equation*}
	\cl(<J):=\bigcup_{s\in J} \cl(J-s),
\end{equation*}
where $\cl(J-s)$ is defined by \eqref{e:join}.
\begin{lemma}[cf.~Lemma~\ref{l:Kunneth}]\label{l:jKunneth}
	The map $\rh^*(\cl(J))\to \rh^*(\cl(<J))$, induced by the inclusion, is the zero homomorphism.
\end{lemma}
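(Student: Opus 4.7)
The plan is to induct on $|J|$, following the pattern of the proof of Lemma~\ref{l:Kunneth}(ii) but with products replaced by joins and the relative K\"unneth formula for products replaced by its analog for joins. The base case $|J|=1$ is immediate since $\cl(<J)=\emptyset$ and $\cl(s)$ is nonempty (so $\rh^{-1}(\cl(s))=0$).

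For the inductive step, pick $s\in J$ and set $A=\cl(J-s)$, $A'=\cl(<(J-s))$, $B=\cl(s)$. A direct check from \eqref{e:pjoin} gives $\cl(J)=A*B$ and $\cl(<J)=A\cup(A'*B)$ with $A\cap(A'*B)=A'$, which I recognize as the \emph{join of pairs}
\[
(\cl(J),\cl(<J))=(A,A')*(B,\emptyset),
\]
where $(X,X')*(Y,Y'):=(X*Y,X'*Y\cup X*Y')$ is the evident join-analog of the product of pairs used in Lemma~\ref{l:Kunneth}(ii). By the long exact sequence of this pair, the conclusion is equivalent to surjectivity of the forgetful map $H^n(\cl(J),\cl(<J))\to\rh^n(\cl(J))$ for every $n$.

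The main tool is a relative K\"unneth formula for join pairs,
\[
H^n\bigl((X,X')*(Y,Y')\bigr)=\bigoplus_{i+j=n-1}H^i(X,X')\otimes H^j(Y,Y')\quad(+\text{ Tor over }\zz),
\]
which follows from excision and the ordinary compactly-supported K\"unneth, using the homeomorphism $(X*Y)\setminus(X'*Y\cup X*Y')\cong(X\setminus X')\times(Y\setminus Y')\times(0,1)$. Applied here this yields $H^n(\cl(J),\cl(<J))=\bigoplus_{i+j=n-1}H^i(A,A')\otimes H^j(B)$, which matches the K\"unneth decomposition $\rh^n(\cl(J))=\bigoplus_{i+j=n-1}\rh^i(A)\otimes\rh^j(B)$ supplied by Lemma~\ref{l:join}. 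By naturality, the forgetful map is the direct sum of tensor products of the component maps $H^i(A,A')\to\rh^i(A)$ and the standard surjection $H^j(B)\to\rh^j(B)$.

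It remains to show $H^i(A,A')\to\rh^i(A)$ is surjective. The LES of $(A,A')$ identifies its image with $\ker(H^i(A)\to H^i(A'))$; for $i\ge 1$ this equals $\rh^i(A)=H^i(A)$ because the map $\rh^i(\cl(J-s))\to\rh^i(\cl(<(J-s)))$ vanishes by the inductive hypothesis applied to $J-s$, while for $i=0$ surjectivity is automatic: either $A$ is path-connected (hence $\rh^0(A)=0$, which holds whenever $|J-s|\ge 2$), or $|J-s|=1$ and then $A'=\emptyset$, so $H^0(A,A')=H^0(A)$ surjects onto $\rh^0(A)$ by the quotient. The principal obstacle I anticipate is setting up the relative K\"unneth formula for joins carefully, including the Tor correction over $\zz$, and verifying that naturality genuinely identifies the forgetful map with the claimed summand-wise tensor product of component maps; both are routine but demand some bookkeeping.
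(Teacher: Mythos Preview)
Your approach is correct and runs essentially parallel to the paper's proof: both argue by induction on $|J|$, both reduce to showing that $H^*(\cl(J),\cl(<J))\to\rh^*(\cl(J))$ is onto, and both use a K\"unneth formula for joins together with the inductive hypothesis applied to $J-s$. The paper differs only in that it factors the desired surjection through the intermediate group $H^*(\cl(J),\cl(J-s))$: it first shows $H^*(\cl(J),\cl(J-s))\to H^*(\cl(<J),\cl(J-s))$ is zero (whence $H^*(\cl(J),\cl(<J))\to H^*(\cl(J),\cl(J-s))$ is onto by the exact sequence of the triple), and then invokes the second sentence of Lemma~\ref{l:join} to see that $H^*(\cl(J),\cl(J-s))\to\rh^*(\cl(J))$ is onto. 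Your direct computation of $H^*(\cl(J),\cl(<J))$ via the join-pair identification $(\cl(J),\cl(<J))=(A,A')\ast(B,\emptyset)$ is a legitimate shortcut that collapses these two steps into one.

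One point to watch: writing the K\"unneth formula as ``tensor products plus Tor'' makes your naturality claim (``the forgetful map is the direct sum of tensor products of the component maps'') awkward, since $\operatorname{Tor}(-,-)$ is not right exact and surjectivity of the component maps does not automatically yield surjectivity on the Tor summand. The paper sidesteps this by writing K\"unneth for joins in the form $\bigoplus\rh^i(A;H^j(B))$ (cohomology with coefficients, as in Lemma~\ref{l:join}) rather than $\bigoplus\rh^i(A)\otimes H^j(B)$; the induction is then really proving the lemma for arbitrary constant coefficients, and the inductive step goes through cleanly. You should adopt the same device---it resolves precisely the ``principal obstacle'' you flagged.
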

\begin{proof}
	The proof is by induction on the cardinality of $J$.
	It is trivially true for $|J|=1$.
	So assume $|J|>1$.
	We first claim that for each $s\in J$, $H^*(\cl(J),\cl(J-s))\to H^*(\cl(<J),\cl(J-s))$ is the zero map.
	We have $(\cl(J), \cl(J-s))=\cl(J-s)\ast(\cl(s),\emptyset)$.
	Hence, as in Lemma~\ref{l:join},
	\begin{equation}\label{e:K1}
		H^n(\cl(J),\cl(J-s))=\bigoplus_{i+j=n-1} \rh^i\bigl(\cl(J-s);H^j(\cl(s))\bigr).
	\end{equation}
	Similarly, $\cl(<J)=(\cl(<(J-s))\ast \cl(s)) \cup (\cl(J-s)\ast \emptyset)$; so,
	\begin{align}
		H^n(\cl(<J), \cl(J-s))&=H^n(\cl(<(J-s))\ast (\cl(s),\emptyset))\notag\\
		&=\bigoplus_{i+j=n-1} \rh^i\bigl(\cl(<(J-s)); H^j(\cl(s))\bigr).\label{e:K2}
	\end{align}
	By inductive hypothesis, $\rh^i(\cl(J-s))\to \rh^i\bigl(\cl(<(J-s))\bigr)$ is zero.
	Comparing \eqref{e:K1} and \eqref{e:K2}, we see that $H^*(\cl(J),\cl(J-s))\to H^*(\cl(<J),\cl(J-s))$ is the zero map, which proves the claim.
	By the exact sequence of the triple, this is equivalent to the statement that $H^*(\cl(J),\cl(<J))\to H^*(\cl(J),\cl(J-s))$ is onto.
	By Lemma~\ref{l:join}, $H^*(\cl(J),\cl(J-s))\to \rh^*(\cl(J))$ is also onto; hence, so is their composition, $H^*(\cl(J), \cl(<J))\to \rh^*(\cl(J))$.
	But this is equivalent to the statement that $\rh^*(\cl(J))\to \rh^*(\cl(<J))$ is zero, which is what we wanted to prove.
\end{proof}

Put $\ck:=K(\cl)$.
We want to use the spectral sequence of \S\ref{s:ss} to compute the cohomology of $(\ck,\ck^{T-I})$ for any $I\in \cs(\cl)$.
To warm up, let us do first the case $I=\emptyset$.
We note that $\ck^T$ is (the barycentric subdivision of) $\cl$ and $\ck$ is the cone on $\cl$.
Since
\[
	\ck=\piprod_L (K(\cl(s)), \cl(s))\quad \text{and} \quad \cl=\aster_L \cl(s),
\]
$(\ck,\cl)$ is a pair of posets of spaces over $\cs(L)$ (cf.~\eqref{e:join} and \eqref{e:pjoin}).
Lemma~\ref{l:jKunneth} says that condition (Z) of \S\ref{s:ss} holds; so Lemma~\ref{l:main} gives the following,
\begin{equation}\label{e:jcoh}
	\grh^n(\ck,\ck^T)=\bigoplus_{\substack{J\in \cs(L)\\
	i+j=n}} H^i\bigl(K_J,\partial K_J; H^j(\cone(\cl(J)), \cl(J))\bigr).
\end{equation}
Note that the term $H^j(\cone(\cl(J)), \cl(J))$ can be replaced by $\rh^{j-1}(\cl(J))$.
Let \( F:=\{s\in \cs(L)\mid \cl(s) \text{ is the simplex on }T_s\}.
\) Note that if $J\cap F\neq \emptyset$, then the join $\cl(J)$ is contractible (since one of its factors is a simplex).
So, in this case the coefficients in \eqref{e:jcoh}, $H^j(\cone(\cl(J)),\cl(J))$ vanish for all $j$.

Next, fix $I\in \cs(\cl)$.
For each $s\in S$, $I_s=I\cap T_s$.
Define a subset $G(I)$ of $S$ by
\begin{equation}\label{e:gi}
	G(I):=\{s\in \pi(I)\cap F\mid I_s=T_s\}.
\end{equation}
Then $G(I)$ is the vertex set of a simplex $\gs$ of $L$ (since $G(I)\subseteq \pi(I)$).
Let $^I\!L=L=\gs$ be the full subcomplex of $L$ spanned by $S-G(I)$, and let $^I\!K:=K(^I\!L)$ be the Davis chamber.
It is not hard to see that $^I\!L$ is homotopy equivalent to $K^{S-G(I)}$ (see \cite[Lemma~A.5.5, p.~416]{dbook}), where $\gs$ denotes the simplex corresponding to $G(I)$.
For each $s\in S-G(I)$, let $\cl^I(s)$ denote the full subcomplex of $\cl(s)$ spanned by $T_s-I_s$ and for each $J\in \cs(^I\!L)$, put
\begin{equation}\label{e:li}
	\cl^I(J):= \Aster_{s\in J} \cl^I(s).
\end{equation}
The usual spectral sequence argument proves the following.
\begin{theorem}\label{t:cohpjoin}
	With notation as above,
	\[
		\grh^n(\ck,\ck^{T-I})=\bigoplus_{\substack{J\in \cs(^I\!L)\\J\cap F=\emptyset\\
		i+j=n-1}} H^i\bigl({}^I\!K_J,\partial\, ^I\!K_J; \rh^j(\cl^I(J))\bigr).
	\]
\end{theorem}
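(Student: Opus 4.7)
The strategy is to imitate the warm-up argument for $I=\emptyset$ given just before the theorem, replacing the pair $(\ck,\cl)=(\ck,\ck^T)$ with the pair $(\ck,\ck^{T-I})$ and replacing the indexing poset $\cs(L)$ with $\cs({}^I\!L)$. First I would set up the poset-of-spaces structure: for each $J\in\cs({}^I\!L)$ (equivalently, $J\subseteq S-G(I)$ spanning a simplex of $L$), put $\ck_J:=K(\cl(J))$, and let $\ck_J^{T-I}:=\ck_J\cap \ck^{T-I}$. Using Lemma~\ref{l:commute} together with the definition of $G(I)$, I expect $\ck_J^{T-I}$ to deformation retract onto $\cl^I(J)=\Aster_{s\in J}\cl^I(s)$ (the vertices of $T_s$ lying in $I_s$ contribute cone-points that can be collapsed), so that $(\ck_J,\ck_J^{T-I})$ is homotopy equivalent to $(\cone(\cl^I(J)),\cl^I(J))$ and hence has relative cohomology $\rh^{j-1}(\cl^I(J))$ in degree $j$. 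The vertices in $G(I)$ drop out of the indexing since their entire mirrors lie in $I$, which is precisely why the poset of spaces is over $\cs({}^I\!L)$ rather than $\cs(L)$.

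Next I would run the double complex of Lemma~\ref{l:ss} on this pair. As in the warm-up calculation \eqref{e:jcoh}, this produces a spectral sequence whose $E_1$ page is the cochain complex on $\flag(\cs({}^I\!L))={}^I\!K$ with coefficient system $\gs\mapsto H^*(\ck_{\min\gs},\ck_{\min\gs}^{T-I})=\rh^{*-1}(\cl^I(\min\gs))$. To pass from $E_1$ to the asserted answer I need condition~(Z) of Section~\ref{s:ss}: the restriction map $\rh^*(\cl^I(J))\to\rh^*(\cl^I(<J))$ must vanish for every $J\in\cs({}^I\!L)$. This is exactly the statement of Lemma~\ref{l:jKunneth} applied to the family $\{\cl^I(s)\}_{s\in S-G(I)}$, whose proof goes through verbatim (it only uses the relative K\"unneth formula for joins together with induction on $|J|$). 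Once (Z) is in place, Lemma~\ref{l:main}\,(ii) collapses the spectral sequence at $E_2$ and gives the direct sum decomposition
\[
\grh^n(\ck,\ck^{T-I})=\bigoplus_{\substack{J\in\cs({}^I\!L)\\ i+j=n-1}} H^i\bigl({}^I\!K_J,\partial{}^I\!K_J;\rh^j(\cl^I(J))\bigr).
\]

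Finally, I would account for the condition $J\cap F=\emptyset$. For $s\in F$ one has that $\cl(s)$ is a simplex on $T_s$; and for $s\notin G(I)$ one has $I_s\subsetneq T_s$. Consequently, for $s\in F-G(I)$ the complex $\cl^I(s)$ is the full subcomplex of a simplex on the nonempty vertex set $T_s-I_s$, hence is itself a (nonempty) simplex, so contractible. Therefore, whenever $J\cap F\neq\emptyset$ the join $\cl^I(J)$ has a contractible factor and is itself contractible, which kills the coefficient $\rh^j(\cl^I(J))$ for every $j$. Removing these vanishing summands yields the stated formula.

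\paragraph{Main obstacle.} The one place where care is really needed is Step~1: verifying that $\ck_J^{T-I}$ is homotopy equivalent to $\cl^I(J)$ as claimed, and that the inclusions $\ck_{J'}^{T-I}\hookrightarrow\ck_J^{T-I}$ for $J'<J$ correspond under this identification to the natural face inclusions $\cl^I(J')\hookrightarrow\cl^I(J)$ used in Lemma~\ref{l:jKunneth}. Everything else is a direct adaptation of the $I=\emptyset$ argument, but this compatibility between the polyhedral-product model and the polyhedral-join model in the presence of a partial mirror restriction~$I$ must be checked carefully using Lemma~\ref{l:commute} and the description of the mirror structure on $K({}\cdot)$ in Section~\ref{ss:real}.
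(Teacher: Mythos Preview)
Your proposal is correct and matches the paper's approach: the paper's own ``proof'' is the single sentence ``The usual spectral sequence argument proves the following,'' and what you have written is exactly that argument spelled out, following the warm-up computation \eqref{e:jcoh} with $\cl(s)$ replaced by $\cl^I(s)$ and $L$ by ${}^I\!L$. Your identification of the main obstacle (checking that the pieces of the cover have the right homotopy type and that the restriction maps are compatible with those in Lemma~\ref{l:jKunneth}) is accurate; one small imprecision is the phrase ``their entire mirrors lie in $I$''---what you need is that for $s\in G(I)$ the factor pair $(K(\cl(s)),\partial K(\cl(s)))$ consists of two contractible spaces (both are cones on a simplex), so these factors can be collapsed, which is what justifies passing from $\cs(L)$ to $\cs({}^I\!L)$ as the indexing poset.
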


There are two extreme cases of Theorem~\ref{t:cohpjoin}.
\begin{corollary}\label{c:pjoin1}
	Suppose each $\cl(s)$ is a simplex.
	Then
	\[
		H^n(\ck,\ck^{T-I})=H^n(^I\!K,\partial\, ^I\!K)
	\]
	($=\rh^{n-1}(K^{S-G(I)})=\rh^{n-1}(^I\!L$)).
\end{corollary}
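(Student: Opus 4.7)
The plan is to obtain this as an immediate specialization of Theorem~\ref{t:cohpjoin}. Under the hypothesis that each $\cl(s)$ is a simplex on $T_s$, the set $F$ defined just before that theorem equals all of $S$, so the side condition $J\cap F=\emptyset$ on the summation index forces $J=\emptyset$ to be the only admissible index; every other summand vanishes outright.

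With $J=\emptyset$, the surviving term involves the empty polyhedral join $\cl^I(\emptyset)$, whose reduced cohomology is concentrated in degree $-1$ under the convention $\rh^{-1}(\emptyset)=\zz$ adopted in Lemma~\ref{l:join}. The constraint $i+j=n-1$ then pins down $(i,j)=(n,-1)$, and the formula reduces to
\[
H^n({}^I\!K_\emptyset,\partial\,{}^I\!K_\emptyset;\zz)=H^n({}^I\!K,\partial\,{}^I\!K).
\]
Because only a single value of $J$ contributes, the filtration underlying the spectral sequence has just one nontrivial stratum, so the associated graded $\grh^n$ coincides with $H^n$ itself; that is, no extension problem arises and we get $H^n(\ck,\ck^{T-I})$ on the nose.

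The parenthetical identifications follow from standard facts already recorded in the paper. Namely, ${}^I\!K$ is the cone on its boundary (it is the Davis chamber of the Coxeter subsystem indexed by $S-G(I)$), so $H^n({}^I\!K,\partial\,{}^I\!K)\cong\rh^{n-1}(\partial\,{}^I\!K)\cong\rh^{n-1}({}^I\!L)$; and the homotopy equivalence ${}^I\!L\simeq K^{S-G(I)}$, cited via \cite[Lemma~A.5.5]{dbook} in the paragraph preceding Theorem~\ref{t:cohpjoin}, supplies the remaining equality. I do not foresee a real obstacle: once Theorem~\ref{t:cohpjoin} is in hand this is purely a matter of unpacking, and the only substantive step is to observe that the hypothesis forces $F=S$ and thereby collapses the sum to the single index $J=\emptyset$.
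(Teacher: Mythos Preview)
Your proof is correct and follows essentially the same approach as the paper: the paper observes that the hypothesis gives $F=S$, so $J\cap F=J$ forces $J=\emptyset$, and then the single surviving term with $j=-1$ yields the formula. You add the (valid) remark that with only one $(i,j)$ contributing for each $n$ the associated graded coincides with the actual cohomology, and you spell out the parenthetical identifications; the paper leaves these implicit.
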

\begin{proof}
	If each $\cl(s)$ is a simplex, then in Theorem~\ref{t:cohpjoin}, $J\cap F=J$ is nonempty unless $J= \emptyset$.
	When $J=\emptyset$, $\rh^j(\cl^I(J))$ is nonzero only for $j=-1$ and we get the formula in the corollary.
\end{proof}
\begin{corollary}\label{c:pjoin2}
	Suppose no $\cl(s)$ is a simplex.
	Then
	\[
		\grh^n(\ck,\ck^{T-I})=\bigoplus_{\substack{J\in \cs(L)\\
		i+j=n-1}} H^i\bigl(K_J,\partial K_J; \rh^j(\cl^I(J))\bigr).
	\]
\end{corollary}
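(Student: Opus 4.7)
The plan is to deduce Corollary~\ref{c:pjoin2} as an immediate specialization of Theorem~\ref{t:cohpjoin}. The key observation is that the hypothesis ``no $\cl(s)$ is a simplex'' forces the set $F$, defined in the discussion preceding Theorem~\ref{t:cohpjoin} as the set of $s\in S$ for which $\cl(s)$ is the full simplex on $T_s$, to be empty. From $F=\emptyset$ two simplifications follow at once: the subset $G(I)\subseteq \pi(I)\cap F$ appearing in \eqref{e:gi} is empty, and the side condition $J\cap F=\emptyset$ indexing the sum in Theorem~\ref{t:cohpjoin} holds automatically for every $J\in \cs(L)$.

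Since $G(I)=\emptyset$, the subcomplex $^I\!L$ of $L$, being the full subcomplex spanned by $S-G(I)$, is simply $L$ itself. Consequently $^I\!K=K$, and for each $J\in \cs(L)$ we have an identification $({}^I\!K_J,\partial\, {}^I\!K_J)=(K_J,\partial K_J)$. The definition \eqref{e:li} of $\cl^I(J)$ is unchanged.

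Substituting these identifications into the formula of Theorem~\ref{t:cohpjoin} and dropping the now vacuous restriction $J\cap F=\emptyset$ yields exactly
\[
\grh^n(\ck,\ck^{T-I})=\bigoplus_{\substack{J\in \cs(L)\\ i+j=n-1}} H^i\bigl(K_J,\partial K_J;\rh^j(\cl^I(J))\bigr),
\]
which is the claim. Since the corollary is purely a bookkeeping specialization of Theorem~\ref{t:cohpjoin}, there is no genuine obstacle; all the cohomological and combinatorial work has already been absorbed into the proof of that theorem (which itself rests on Lemmas~\ref{l:commute}, \ref{l:join}, and \ref{l:jKunneth} together with the spectral sequence of \S\ref{s:ss}).
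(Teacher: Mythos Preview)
Your proof is correct and follows exactly the same route as the paper: the hypothesis forces $F=\emptyset$, hence $G(I)=\emptyset$, so $^I\!L=L$, $^I\!K=K$, and the condition $J\cap F=\emptyset$ becomes vacuous in Theorem~\ref{t:cohpjoin}. The paper's proof is simply a one-line compression of the same observations.
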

\begin{proof}
	The hypothesis implies $G(I)=\emptyset$; hence, $^I\!L=L$ and $^I\!K=K$.
\end{proof}

\subsection{Cohomology of graph products of Coxeter groups with group ring coefficients}\label{ss:oct}
We continue from \S\ref{ss:spaces}.
We are given a family of Coxeter systems $\{(V_s,T_s)\}_{s\in S}$ and we form the graph product with respect to $\gG$.
Let $T$ denote the disjoint union of the $T_s$ and put $V=\prod_\gG V_s$.
Then $(V,T)$ is (obviously) a Coxeter system.
The projection $\pi:V\to W_L$ restricts to the natural projection $T\to S$ which sends $T_s$ to $s$.
For each $s\in S$ define $\cl(s)$ to be $L(V_s,T_s)$.
Clearly,
\begin{align}
	K(V,T)&=\piprod_{L(W,S)} K(V_s,T_s):= \ck\notag\\
	L(V,T)&=\aster _{L(W,S)} \cl(s):= \cl.\label{e:lvt}
\end{align}
Henceforth, we write $L$ and $K$ for $L(W,S)$ and $K(W,S)$, respectively.
\begin{notation}\label{n:vj}
	For each $J\in \cs(W,S)$, put $T(J)=\pim$ and
	\[
		V_J:=V_{\pi^\minus(J)}=\prod_{s\in J} V_s.
	\]
\end{notation}

We can combine Theorem~\ref{t:cohpjoin} with Theorem~\ref{t:d98} to get the following calculation of $H^*(V;\zz V)$.
\begin{theorem}\label{t:?}
	\[
		\grh^n(V;\zz V)=\bigoplus_{I\in \cs(V,T)}\ \bigoplus_{\substack{J\in \cs(^I\!L)\\J\cap F=\emptyset\\
		i+j=n}} H^i \big( {}^I\!K_J,\partial\, ^I\!K_J ;\rh^{j-1}(\cl^I(J))\big) \otimes \h{A}(V)^I.
	\]
\end{theorem}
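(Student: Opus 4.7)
The strategy is to combine Theorem~\ref{t:d98}, applied to the Coxeter system $(V,T)$, with Theorem~\ref{t:cohpjoin}. The first reduces the computation of $H^n(V;\zz V)$ to computing each pair cohomology $H^n(\ck,\ck^{T-I})$, and the second computes $\grh^n(\ck,\ck^{T-I})$ using the polyhedral-join description $\cl=\aster_L\cl(s)$ from \eqref{e:lvt}.

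First I would apply Theorem~\ref{t:d98} to the Coxeter system $(V,T)$. Since, by \eqref{e:lvt}, the Davis chamber $K(V,T)$ is precisely our $\ck$ and the nerve $L(V,T)$ is $\cl$, the theorem gives
\[
H^n(V;\zz V)=H^n_c(\cu(V,\ck))=\bigoplus_{I\in\cs(V,T)} H^n(\ck,\ck^{T-I})\otimes \h{A}(V)^I.
\]
Next, for each $I\in\cs(V,T)$, Theorem~\ref{t:cohpjoin} yields
\[
\grh^n(\ck,\ck^{T-I})=\bigoplus_{\substack{J\in\cs({}^I\!L)\\ J\cap F=\emptyset\\ i+j=n-1}}H^i\bigl({}^I\!K_J,\partial\,{}^I\!K_J;\rh^j(\cl^I(J))\bigr).
\]
Tensoring by $\h{A}(V)^I$, summing over $I\in\cs(V,T)$, and then reindexing with $j\mapsto j-1$ (so the shape condition $i+(j-1)=n-1$ becomes $i+j=n$, and the coefficient $\rh^j(\cl^I(J))$ becomes $\rh^{j-1}(\cl^I(J))$) produces exactly the stated formula.

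The point requiring care, and the one I expect to be the main obstacle, is the compatibility of filtrations on $H^n(V;\zz V)$. Theorem~\ref{t:d98} already provides an honest direct-sum decomposition indexed by $I$, while each summand is computed only up to associated graded via Theorem~\ref{t:cohpjoin}. One must verify that the poset-of-spaces filtration used for $\ck=\piprod_L(K(\cl(s)),\cl(s))$ extends $V$-equivariantly to a filtration of the Davis complex $\cu(V,\ck)$ whose associated graded respects the Davis direct-sum decomposition. Concretely, this means showing that the spectral sequence of Lemma~\ref{l:ss} for the polyhedral cover of $\ck$ is compatible with the chamber-cover spectral sequence that underlies Theorem~\ref{t:d98}, so that the two may be combined into a single spectral sequence whose $E_2$-page already splits as the double direct sum above and collapses by the same $\grh$-arguments used in \S\ref{ss:pj}. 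Once this compatibility is in place, the conclusion follows by substitution, and the $\grh$ on the left denotes the graded group for the resulting $V$-equivariant filtration of $\cu(V,\ck)$.
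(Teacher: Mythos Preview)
Your approach is essentially the same as the paper's: the sentence immediately preceding the theorem states that one simply combines Theorem~\ref{t:d98} (applied to $(V,T)$) with Theorem~\ref{t:cohpjoin}, and no further argument is given.

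Your worry about compatibility of filtrations is more elaborate than what is needed. Theorem~\ref{t:d98} gives an \emph{honest} direct-sum decomposition of $H^n(V;\zz V)$, not merely an associated graded. Hence one may simply transport, via this isomorphism, the spectral-sequence filtration of each $H^n(\ck,\ck^{T-I})$ coming from Theorem~\ref{t:cohpjoin} to the corresponding summand $H^n(\ck,\ck^{T-I})\otimes\h A(V)^I$, and take the direct sum of these filtrations. The symbol $\grh^n(V;\zz V)$ then denotes the associated graded of this induced filtration, and the formula follows by direct substitution. There is no need to produce a single $V$-equivariant filtration on $\cu(V,\ck)$ or to compare two spectral sequences.
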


On the other hand, in Theorem~\ref{t:gpfree} we calculated the cohomology of an arbitrary graph product of infinite groups and in Theorem~\ref{t:ddjo2} for an arbitrary graph product of finite groups.
We would like to see that these answers agree with the above in the case of Coxeter groups.

For any $J\in \cs(L)$, put
\[
	\ci(J):=\{I\in \cs(\cl)\mid G(I)=J\}.
\]
When each $V_s$ is finite,
\begin{align}
	\grh^n(V;\zz V)&= \bigoplus_{I\in \cs(\cl)} H^n(\ck,\ck^{T-I})\otimes \hat{A}(V)^I &\text{(by Theorem~\ref{t:d98} )}\notag\\
	&=\bigoplus_{J\in \cs(L)}\ \bigoplus_{I\in \ci(J)}\rh^{n-1}(K^{S-J}) \otimes \hat{A}(V)^I &\text{(by Corollary~\ref{c:pjoin1})}\notag\\
	&=\bigoplus_{J\in \cs(L)}H^n(K, K^{S-J})\otimes \bigoplus_{I\in \ci(J)}\hat{A}(V)^I,\label{e:finite}
\end{align}
where \eqref{e:finite} agrees with Theorem~\ref{t:ddjo2} with
\[
	\hat{A}(J):=\bigoplus_{I\in \ci(J)}\hat{A}(V)^I.
\]

Next consider the situation where all $V_s$ are infinite.
First we consider the special case where the base complex $L$ is a simplex.
\begin{lemma}\label{l:simplex}
	Suppose $V_J$ is the $J$-fold product of $\{V_s\}_{s\in J}$, where each $V_s$ is infinite.
	For each $I\in\cs(V_J,T(J))$, $\cl^I(J)$ denotes the $J$-fold join defined by \eqref{e:li}.
	Then
	\[
		H^n(V_J;\zz V_J)=\bigoplus_{I\in \cs(V_J,T(J))} \rh^{n-1} (\cl^I(J))\otimes \hat{A}(V_J)^I.
	\]
\end{lemma}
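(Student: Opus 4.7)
The plan is to apply Theorem~\ref{t:d98} directly to the Coxeter system $(V_J, T(J))$. This immediately gives
\[
H^n(V_J;\zz V_J)=\bigoplus_{I\in \cs(V_J,T(J))} H^n(\ck, \ck^{T(J)-I}) \otimes \h{A}(V_J)^I,
\]
where $\ck = K(V_J, T(J))$. So the lemma will follow once one identifies, for each spherical $I \subseteq T(J)$, the relative cohomology group $H^n(\ck,\ck^{T(J)-I})$ with $\rh^{n-1}(\cl^I(J))$. To do this, the natural tool is Theorem~\ref{t:cohpjoin}, or more precisely its Corollary~\ref{c:pjoin2}.

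Next I would verify the hypotheses. Since each $V_s$ is infinite, $T_s$ itself cannot be a spherical subset, so the nerve $\cl(s) = L(V_s,T_s)$ is not the full simplex on $T_s$. Hence the set $F$ from \S\ref{ss:pj} is empty. Consequently $G(I) = \emptyset$ for every $I$, so $\il = L$ and $\ik = K$, and the side condition $J' \cap F = \emptyset$ is vacuous. Corollary~\ref{c:pjoin2} then yields
\[
\grh^n(\ck,\ck^{T(J)-I}) = \bigoplus_{\substack{J' \in \cs(L)\\ i+k=n-1}} H^i\bigl(K_{J'},\partial K_{J'}; \rh^k(\cl^I(J'))\bigr).
\]

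The crux is to show that the hypothesis that $L$ is the simplex on $J$ forces only the term $J' = J$ to contribute on the right. Indeed $\cs(L) = 2^J$, and for any proper subset $J' \subsetneq J$ the interval $[J',J]$ has $J'$ as a least element, so $K_{J'} = \flag(\cs(L)_{\ge J'})$ is a cone with apex $J'$; likewise $(J',J]$ has $J$ as a greatest element, making $\partial K_{J'}$ a cone with apex $J$. Both are contractible, so $H^*(K_{J'},\partial K_{J'}) = 0$. When $J' = J$, $K_J$ is a single vertex and $\partial K_J$ is empty, giving $H^0(K_J,\partial K_J) = \zz$ and all higher groups zero. Since only one summand of the right-hand side is nontrivial, the induced filtration on $H^n(\ck,\ck^{T(J)-I})$ has a single stage, so $\grh^n = H^n$ and we conclude $H^n(\ck,\ck^{T(J)-I}) \cong \rh^{n-1}(\cl^I(J))$. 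Substituting into the expansion from Theorem~\ref{t:d98} gives the lemma.

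The main obstacle is essentially bookkeeping: first recognizing that the infiniteness hypothesis on each $V_s$ is precisely what forces $F = \emptyset$, and second, seeing that the interval $[J',J]$ in the Boolean lattice on $J$ produces trivial relative cohomology whenever $J' \subsetneq J$. Both steps are immediate once the definitions of $F$, $G(I)$, and $K_{J'}$ are unpacked, so the real content is in invoking the previously established Theorems \ref{t:d98} and \ref{t:cohpjoin}.
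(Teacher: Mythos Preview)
Your proof is correct. Both you and the paper begin by applying Theorem~\ref{t:d98} to the Coxeter system $(V_J,T(J))$, reducing the lemma to the identification $H^n(\ck(J),\ck(J)^{T(J)-I})\cong \rh^{n-1}(\cl^I(J))$. The paper obtains this identification in one line by observing directly that $\ck(J)^{T(J)-I}$ is homotopy equivalent to $\cl^I(J)$ (the full subcomplex of the nerve $\cl(J)$ on $T(J)-I$); since $\ck(J)$ is a cone, the long exact sequence of the pair gives the result immediately. You instead feed the pair through Corollary~\ref{c:pjoin2} and then argue that, because the base $L$ is the full simplex on $J$, every summand with $J'\subsetneq J$ dies (both $K_{J'}$ and $\partial K_{J'}$ being cones on their top element $J$), leaving only the $J'=J$ term. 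Your route is a little longer but has the virtue of being entirely self-contained within the spectral-sequence machinery already set up in \S\ref{s:ss}--\ref{ss:pj}, whereas the paper's one-line argument appeals to a homotopy equivalence that is asserted without proof there. Your observation that the filtration has a single nontrivial step, so that $\grh^n=H^n$, is exactly what is needed to close the gap between Corollary~\ref{c:pjoin2} and the ungraded statement.
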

\begin{proof}
	Let $\ck(J)$ denote the Davis chamber for $(V_J,T(J))$.
	By Theorem~\ref{t:d98},
	\[
		H^n(V_J;\zz V_J)=\bigoplus_{I\in \cs(V_J,T(J))} H^n(\ck(J),\ck^{T(J)-I}(J)) \otimes \hat{A}(V_J)^I.
	\]
	Moreover, $\ck^{T(J)-I}(J)$ is homotopy equivalent to $\cl^I(J)$.
	The formula in the lemma follows.
\end{proof}

Finally, consider the general case when each $V_s$ is infinite:
\begin{align}
	\grh^n(V;\zz V)&= \bigoplus_{I\in \cs(V,T)} H^n(\ck, \ck^{T-I})\otimes \hat{A}(V)^I \notag\\
	&=\bigoplus_{I\in \cs(V,T)}\ \bigoplus_{\substack{J\in \cs(W,S)\\i+j=n}}H^i(K_J,\partial K_J;\rh^{j-1}\bigl(\cl^I(J))\bigr) \otimes \hat{A}(V)^I \notag\\
	&=\bigoplus_{\substack{J\in \cs(W,S)\\i+j=n}}H^i(K_J,\partial K_J; \bigoplus_{I\in \cs(V,T)} \rh^{j-1}(\cl^I(J)) \otimes \hat{A}(V)^I) \notag\\
	&=\bigoplus_{\substack{J\in \cs(W,S)\\i+j=n}}H^i(K_J,\partial K_J;H^j(V;\zz V)),\label{e:infinite}
\end{align}
where the first equation follows from Theorem~\ref{t:d98}, the second from Corollary~\ref{c:pjoin2}, the third from the fact that $\hat{A}(V)^I$ is free abelian, and the last from Lemma~\ref{l:simplex}.
Moreover, \eqref{e:infinite} agrees with Theorem~\ref{t:gpfree}.

\section{Weighted \texorpdfstring{$L^2$}{L\texttwosuperior}-cohomology of buildings and Coxeter groups}\label{s:weighted}
\subsection{Hecke--von Neumann algebras}\label{ss:hvn}
Suppose given a Coxeter system $(W,S)$ and a function $i:S\to I$ to an index set $I$ such that $i(s)=i(t)$ whenever $s$ and $t$ are conjugate in $W$.
A \emph{multiparameter} for $(W,S)$ is an $I$-tuple $\bt=(t_i)_{i\in I}$ of indeterminates (or of numbers).
Write $t_s$ instead of $t_{i(s)}$.
If $s_1\cdots s_n$ is a reduced expression for an element $w\in W$, then the monomial
\[
	\bt_w:= t_{s_1}\cdots t_{s_n}
\]
depends only on $w$ and not on the choice of reduced expression for it.
(This follows from Tits' solution to the Word Problem for Coxeter groups, cf.~\cite[p.~315]{dbook}.) The \emph{growth series} of $W$ is power series in $\bt$ defined by,
\[
	W(\bt):=\sum_{w\in W} \bt_w.
\]
This power series has a region of convergence $\car(W)$ (a subset of $\bC^I$).
If $W$ is finite, then $W(\bt)$ is a polynomial.
For any Coxeter group $W$, it can be shown that $W(\bt)$ is a rational function of $\bt$ (cf.\ \cite[Cor.~17.1.6]{dbook}).

For any set $X$, $\bR^X$ denotes the vector space of finitely supported real-valued functions on $X$.
For each $x\in X$, $e_x$ denotes the indicator function of $\{x\}$ so that $\{e_x\}_{x\in X}$ is the \emph{standard basis} for $\bR^X$.
For a multiparameter $\bq$ of positive real numbers, define a inner product $\langle\ ,\ \rangle_\bq$ on $\bR^W$ by
\begin{equation*}
	\langle e_w,e_{w'}\rangle_\bq=
	\begin{cases}
		\bq_w &\text{if $w=w'$,}\\
		0 &\text{otherwise.}
	\end{cases}
\end{equation*}
Let $\ltwo (W)$ denote the Hilbert space completion of $\bR^W$ with respect to $\langle\ ,\ \rangle_\bq$.

Using $\bq$, one can also give $\bR^W$ the structure of a \emph{Hecke algebra}, determined by the formula
\[
	e_s e_w=
	\begin{cases}
		e_{sw} &\text{if $l(sw)>l(w)$,}\\
		q_s e_{sw} +(q_s-1)e_w &\text{if $l(sw)<l(w)$.}
	\end{cases}
\]
When $\bq=\bone$ (the multiparameter which is identically $1$), $\bR_\bq(W)$ is the group algebra of $W$.

Define an anti-involution $*$ on $\bR_\bq W$, by $(\sum x_we_w)^* := \sum x_{w^\minus}e_w$.
The inner product $\langle\ ,\ \rangle_\bq$ and the anti-involution $*$ give $\bR_\bq W$ the structure of a Hilbert algebra (see \cite[Prop.~2.1]{dym}).
(In other words, $x^*$, the image of $x$ under the anti-involution of algebras, is equal to the adjoint of $x$ with respect to $\langle\ ,\ \rangle_\bq$.) This implies that there is an associated von Neumann algebra $\cn (W)$ (called the \emph{Hecke--von Neumann algebra}) acting from the right on $\ltwo (W)$.
One definition of $\cn (W)$ is that it is the algebra of all bounded linear endomorphisms of $\ltwo (W)$ which commute with the left $\bR_\bq (W)$-action.
An equivalent definition is that it is the weak closure of the elements $\bR_\bq(W)$ which act from the right on $\ltwo (W)$ as bounded linear operators.

Define the \emph{von Neumann trace} of $\gf \in \cn (W)$ by $\tr_{\cn}(\gf): =\langle e_1\gf, e_1\rangle_\bq$ and similarly, for any $(n\times n)$-matrix with coefficients in $\cn(W)$.
This allows us to define the \emph{von Neumann dimension} of any closed subspace of an $n$-fold orthogonal direct sum of copies of $\ltwo (W)$ which is stable under the diagonal $\bR_\bq (W)$-action: if $V\subset (\ltwo(W))^n$ is such a subspace and $p_V: (\ltwo(W))^n\to (\ltwo(W))^n$ is orthogonal projection onto $V$, then $p_v\in \cn (W)$, so define
\begin{equation}\label{e:dim}
	\dim_{\cn} V:=\tr_{\cn} (p_V).
\end{equation}

For any $J\subset S$ and $\bq\in \car (W_J)$, there is a self-adjoint idempotent $a_J\in \cn (W)$ defined by
\begin{equation*}
	a_J:=\frac{1}{W_J(\bq)} \sum_{w\in W_J} e_w
\end{equation*}
(cf.~\cite[Lemma 19.2.5]{dbook}).
For $s\in S$, write $a_s$ instead of $a_{\{s\}}$.
For $s\in S$ and $J\subset S$, define subspaces of $\ltwo (W)$ by
\[
	A^s:=\ltwo (W)a_s\ ,\quad\quad A^J:=\bigcap_{s\in J}A^s.
\]
These subspaces are stable under the action of $\bR_\bq W$ from the left.
Moreover, $A^J$ is the image of the idempotent $a_J$ if $\bq\in \car(W_J)$, and $A^J=0$ whenever $\bq\notin \car(W_J)$ (cf.~\cite[\S 19.2]{dbook}).

Let $A^{>J}$ denote the subspace $\sum_{I>J} A^I$ of $A^J$ and put
\begin{equation*}\label{e:dj}
	D^J:= A^J \cap (A^{> J})^\perp
\end{equation*}
The following is one of the main results of \cite{ddjo07} (or see \cite[Thm~20.6.1]{dbook}).
\begin{theorem}[The Decomposition Theorem of \cite{ddjo07}]\label{t:ddjodecomp}
	If $\bq\in \ol{\car} \cup \ol{\car^\minus}$, then
	\[
		\sum_{I\supseteq J} D^I
	\]
	is a direct sum and a dense subspace of $A^J$.
	In particular, taking $J=\emptyset$,
	\[
		\ltwo= \ol{\sum D^I}.
	\]
	If $\bq\in \ol{\car}$, the only nonzero terms in this sum are those with $I$ cospherical (i.e., with $S-I\in \cs$), and if $\bq^\minus\in \ol{\car}$, the only nonzero terms are those with $I$ spherical.
	Moreover, for $\bq^\minus \in \ol{\car}$,
	\[
		\dim_{\cn} D^J=\sum_{I\in \cs_{\ge J}} \frac{(-1)^{|I-J|}}{W_I(\bq)}\ .
	\]
\end{theorem}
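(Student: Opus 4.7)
My plan is to prove the Decomposition Theorem in three stages: first establish the spherical-side decomposition when $\bq^\minus \in \overline{\car}$ via Möbius inversion on the Boolean poset $\cs_{\ge J}$; then obtain the cospherical statement when $\bq \in \overline{\car}$ by the $\bq \leftrightarrow \bq^\minus$ Hecke duality; and finally extract the dimension formula by tracing the same Möbius identity.

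The algebraic input I would use is that, for spherical $I$, the element $a_I$ is the self-adjoint orthogonal projection onto $A^I$ and that, for spherical $I \subseteq K$, the product rule gives $a_I a_K = a_K = a_K a_I$ (cf.~\cite[Lemma 19.2.5]{dbook}). Using this, I would introduce candidate projections
\[
p_I := \sum_{K \in \cs_{\ge I}} (-1)^{|K - I|}\, a_K,
\]
the signs being the Möbius function of the interval $[I,S]$ in the Boolean lattice, with the convention that $a_K = 0$ when $K$ is not spherical (justified because $A^K = 0$ in the regime $\bq^\minus \in \overline{\car}$). A direct computation with the product rule shows that the $p_I$ are self-adjoint pairwise orthogonal idempotents whose ranges lie in $D^I$, and Möbius inversion on $\cs_{\ge J}$ recovers $a_J = \sum_{I \in \cs_{\ge J}} p_I$. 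This yields the orthogonal direct-sum decomposition of $A^J$ and the density assertion; a short rank-style argument then upgrades $\Ima p_I \subseteq D^I$ to equality.

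For $\bq \in \overline{\car}$, I would appeal to the standard Hecke-algebraic duality between parameters $\bq$ and $\bq^\minus$ (see \cite[\S 19.1]{dbook}): the canonical $*$-isomorphism of Hecke algebras swaps the roles of spherical and cospherical subsets, converting the previous decomposition at $\bq^\minus$ into the asserted cospherical decomposition at $\bq$. The dimension formula is then immediate from $\tr_{\cn}(a_I) = \langle a_I e_1, e_1 \rangle_\bq = 1/W_I(\bq)$, giving
\[
\dim_{\cn} D^J = \tr_{\cn}(p_J) = \sum_{I \in \cs_{\ge J}} \frac{(-1)^{|I-J|}}{W_I(\bq)}.
\]

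The main obstacle will be the analytic behavior at the boundary $\partial \car$. In the interior of $\car$ everything reduces to finite algebra on bounded operators, but at boundary points the Möbius combinations $p_I$ must still extend as bounded self-adjoint projections, their images must stay mutually orthogonal, and the sum $\sum p_I$ must remain dense in $A^J$ without the benefit of any geometric-series estimate in $\bq$. Securing this extension is precisely what restricts the conclusion to $\overline{\car} \cup \overline{\car^\minus}$ rather than to all of $(0,\infty)^I$, and is the technical heart of the proof in \cite{ddjo07}.
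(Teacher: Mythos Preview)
The paper does not prove this theorem. Theorem~\ref{t:ddjodecomp} is stated as a quotation from \cite{ddjo07} (with an alternative reference to \cite[Thm.~20.6.1]{dbook}); the authors write ``The following is one of the main results of \cite{ddjo07}'' and give no argument. So there is no proof in the present paper against which to compare your proposal.

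That said, your sketch is a reasonable outline of the strategy actually used in \cite{ddjo07}: the M\"obius-inverted idempotents $p_I$ (denoted $h_I$ there) and the inversion identity $a_J=\sum_{I\ge J} p_I$ are indeed the core combinatorial mechanism, and you correctly flag the boundary analysis on $\partial\car$ as the genuine analytic difficulty. Two points to tighten. First, the $\bq\leftrightarrow\bq^{-1}$ symmetry is not literally implemented via a $*$-isomorphism of Hecke algebras that ``swaps spherical and cospherical''; in \cite{ddjo07} the two regimes are handled by parallel but separate arguments using the idempotents $a_I$ on one side and the complementary idempotents $h_I$ (built from $b_s=1-a_s$) on the other, together with the $j$-isomorphism relating $\bR_\bq W$ and $\bR_{\bq^{-1}}W$. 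Second, verifying that $\Ima p_I=D^I$ (not merely $\subseteq$) and that the $p_I$ are genuinely orthogonal requires more than a ``rank-style argument'': one must know that $A^K=0$ for non-spherical $K$ in the relevant regime, which is itself a nontrivial result from \cite{ddjo07}. Your remark that the boundary extension is ``the technical heart'' is accurate, but you should not leave the impression that the interior case is pure algebra---the vanishing of $A^K$ for non-spherical $K$ already uses the growth-series analysis.
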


\subsection{Weighted \texorpdfstring{$L^2$}{L\texttwosuperior}-Betti numbers}\label{ss:betti}
Suppose $M$ is a mirrored CW complex over $S$.
Let $\bp$ be a multiparameter of positive real numbers for a Coxeter system $(W,S)$.
Define a measure $\mup$ on the set of cells of $\cu(W,M)$ by $\mup (c):=\bp_w$, where $w\in W$ is the element of shortest length which moves $c$ into the base chamber $M$ ($=$ the image of $1\times M$ in $\cu(W,M)$).
As in \cite{ddjo07} or \cite{dbook}, we can use $\mup$ to define the \emph{weighted $L^2$-cochains}, $\ltwop C^*(\cu(W,M))$.
The corresponding reduced cohomology groups are denoted $\ltwop H^*(\cu (W,M))$.
The weighted cochains on $\cu(W,M)$ can also be regarded as cochains on $M$ with respect to a certain coefficient system $\ci(\ltwop)$.
This coefficient system associates to a cell $c$ in $M$, the left $\cnp$-module, $\ltwop (W)a_{S(c)}$, i.e.,
\[
	\ci(\ltwop)(c):= \ltwop (W)a_{S(c)}.
\]
The corresponding cochain complex is denoted $C^*(M;\ci(\ltwop))$ and its reduced cohomology by $H^*(M;\ci(\ltwop))$.
We have natural identifications,
\begin{align*}
	C^*(M;\ci(\ltwop))&\cong \ltwop C^*(\cu(W,M)),\\
	H^*(M;\ci(\ltwop))&\cong \ltwop H^*(\cu(W,M)).
\end{align*}
(This is completely analogous to \eqref{e:coeff} and \eqref{e:coeff2} of \S\ref{s:cohom}.
See \cite{ddjo2, ddjo07}.) The \emph{$j^{th}$-weighted $L^2$-Betti number} of $\cu(W,M)$ is defined by
\[
	\ltwop b^j(\cu(W,M)):=\dim_{\cnp} \ltwop H^j(\cu (W,M)) = \dim_{\cnp}H^j(M; \ci(\ltwop)),
\]
where $\dim_{\cnp}$ is defined by \eqref{e:dim}.
Also, put
\[
	\ltwop b^j(W):=\ltwop b^j(\cu(W,K)).
\]
We recall some of the main results of \cite{ddjo07} and \cite{dym}.
\begin{theorem}[Dymara \cite{dym}]\label{t:dym}
	Suppose $\bp\in \ol{\car}$.
	Then $\ltwop b^j(W)=0$ for $j>0$, while
	\[
		b^0(W)=\dim_{\cN_\bp} A^S= \frac{1}{W(\bp)}.
	\]
\end{theorem}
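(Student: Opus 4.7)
The plan is to identify $\ltwop H^*(\cu(W,K))$ with $H^*(K;\ci(\ltwop))$, split this cochain complex via the Decomposition Theorem into summands indexed by cospherical subsets $I \subseteq S$, and show that each summand is either acyclic (for $I \neq S$) or contributes only in degree zero (for $I = S$).

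For the dimension formula $\dim_{\cnp} A^S = 1/W(\bp)$: since $\bp \in \ol{\car}$ the series
\[
a_S \;:=\; \frac{1}{W(\bp)}\sum_{w \in W} e_w
\]
converges in $\ltwop(W)$ to a self-adjoint idempotent in $\cnp(W)$ whose image is $A^S$. Hence $\dim_{\cnp} A^S = \tr_{\cnp}(a_S) = \langle a_S e_1, e_1\rangle_{\bp} = 1/W(\bp)$. Since $A^{>S} = 0$ trivially, the Decomposition Theorem also gives $D^S = A^S$.

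Next, I would decompose the cochain complex cell by cell: the Decomposition Theorem writes each coefficient $A^{S(c)}$ as a closed direct sum of the $D^I$ over cospherical $I \supseteq S(c)$. The cellular coboundary acts through the natural inclusions $A^{S(c)} \hookrightarrow A^{S(c')}$ attached to faces $c < c'$, and since $S(c') \subseteq S(c)$ these respect the index set of the decomposition. Thus $C^*(K;\ci(\ltwop))$ splits as a Hilbert direct sum, indexed by cospherical $I$, of subcomplexes
\[
C^*_I \;=\; C^*(K, K^{S-I}; D^I),
\]
where $D^I$ appears as a constant coefficient and a cell $c$ contributes to $C^*_I$ iff $S(c) \subseteq I$ (equivalently $c \not\subset K^{S-I}$).

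It remains to compute $H^*(K, K^{S-I})$. For $I = S$, $K^{S-I} = \emptyset$ and $K$ is contractible, producing $\bR$ concentrated in degree zero. For cospherical $I \neq S$, the set $T := S - I$ is nonempty and spherical, so every $J \subseteq T$ is spherical and the intersection $\bigcap_{s\in J} K_s = K_J$ is contractible (the poset $\cs_{\ge J}$ has $J$ as minimum). The nerve of the cover $\{K_s\}_{s \in T}$ of $K^{S-I}$ is therefore the full simplex on $T$, and the nerve lemma forces $K^{S-I}$ itself to be contractible; so $H^*(K, K^{S-I}) = 0$. Combining the two cases yields $\ltwop H^0(\cu(W,K)) = D^S = A^S$ and $\ltwop H^j(\cu(W,K)) = 0$ for $j > 0$. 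The main technical point is the cellwise splitting of the coefficient system: one must confirm that the Hilbert space decomposition in the Decomposition Theorem is compatible with the cochain differentials, and this reduces to the trivial observation that $S(c') \subseteq S(c)$ when $c$ is a face of $c'$.
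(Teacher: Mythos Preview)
The paper does not give its own proof of this theorem; it is quoted from Dymara's paper \cite{dym} as background.  So there is no in-paper proof to compare against.

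Your argument is nonetheless sound and is essentially the method behind Theorem~\ref{t:ddjo} (the case $\bp^\minus\in\ol{\car}$, from \cite{ddjo07}): use the Decomposition Theorem to split the coefficient system $\ci(\ltwop)$ on $K$ into constant pieces $D^I$, obtaining $C^*(K,K^{S-I};D^I)$, and then compute $H^*(K,K^{S-I})$ directly.  For $\bp\in\ol{\car}$ only cospherical $I$ survive, and your nerve-lemma computation that $K^{S-I}$ is contractible when $S-I$ is nonempty spherical is exactly right.  Dymara's original argument in \cite{dym} is more elementary and does not pass through the full Decomposition Theorem: roughly, he shows the weighted cochain complex is acyclic above degree~$0$ by an explicit contraction built from the idempotents $a_s$.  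Your route is cleaner once the Decomposition Theorem is in hand, but you should be aware that historically and logically Dymara's result precedes (and in places feeds into) the machinery of \cite{ddjo07}, so if one cared about foundational order there is a mild risk of circularity; as presented in this survey paper, however, both results are simply imported, and your derivation of one from the other is legitimate.
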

\begin{theorem}[{\cite[Thm.~10.3]{ddjo07}}]\label{t:ddjo}
	Suppose $\bp^\minus \in \ol{\car}$.
	Then
	\[
		\ltwop b^j(\cu(W,M))=\sum_{J\in \cs(W,S)} b^j(M,M^{S-J}) \dim_{\cnp} D^J,
	\]
	where the formula for $\dim_{\cnp} D^J$ is given in Theorem~\ref{t:ddjodecomp}.
\end{theorem}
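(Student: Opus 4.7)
The plan is to work entirely at the cochain level, using the identification $\ltwop C^*(\cu(W,M)) \cong C^*(M;\ci(\ltwop))$ with $\ci(\ltwop)(c) = A^{S(c)}$ recalled just above the statement, and to split the whole cochain complex as a direct sum of simple pieces indexed by $J \in \cs(W,S)$ by applying the Decomposition Theorem (Theorem~\ref{t:ddjodecomp}) coefficient-wise. Since $\bp^\minus \in \ol{\car}$, that theorem gives an orthogonal Hilbert-space decomposition
\[
A^{S(c)} \;=\; \widehat{\bigoplus}_{J \in \cs,\, J \supseteq S(c)} D^J
\]
for every cell $c$ of $M$, with only spherical $J$ contributing.

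The first step is to verify that the cellular coboundary of $C^*(M;\ci(\ltwop))$ respects this decomposition. When $c$ is a codimension-one face of $c'$ one has $S(c') \subseteq S(c)$, and the structure map $A^{S(c)} \hookrightarrow A^{S(c')}$ used to assemble $\gd$ is simply the inclusion of subspaces of $\ltwop(W)$; under the decomposition this inclusion sends the $D^J$-summand of the left-hand side (available whenever $J \supseteq S(c)$) identically onto the $D^J$-summand of the right-hand side (available because $J \supseteq S(c) \supseteq S(c')$). Thus the complex splits as a Hilbert direct sum $\widehat{\bigoplus}_{J \in \cs} C_J^*$, where $C_J^*$ consists of those cochains taking values in $D^J$ and supported on cells with $S(c) \subseteq J$.

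The second step identifies each piece. The cells of $M$ with $S(c) \subseteq J$ are exactly those not contained in $M^{S-J} = \bigcup_{s\in S-J} M_s$, so
\[
C_J^* \;\cong\; C^*(M, M^{S-J};\bR) \,\otimes_{\bR}\, D^J,
\]
with differential $\partial \otimes 1$. Since $C^*(M, M^{S-J};\bR)$ is a finite-dimensional cochain complex of real vector spaces, kernels and closed images of $\partial \otimes 1$ factor as tensor products, and reduced cohomology gives
\[
H^j(C_J^*) \;\cong\; H^j(M, M^{S-J};\bR) \otimes D^J
\]
as a Hilbert $\cnp$-module. Taking $\dim_{\cnp}$, which is additive over Hilbert direct sums and multiplicative against finite-dimensional real factors, and then summing over $J \in \cs(W,S)$, yields the claimed formula.

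The main obstacle is the first step: checking that the inclusions $A^{S(c)} \hookrightarrow A^{S(c')}$ appearing in $\gd$ preserve the Decomposition Theorem splitting on the nose. Everything after that is formal. A secondary technical point is ensuring that passage from cochains to reduced cohomology commutes with the Hilbert direct sum over $J$; this is automatic because in each summand the coboundary has closed image (the coefficients $D^J$ are constant and the underlying real cochain complex is finite-dimensional).
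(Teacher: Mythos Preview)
The paper does not give its own proof of this theorem: it is quoted verbatim from \cite[Thm.~10.3]{ddjo07} in the section ``We recall some of the main results of \cite{ddjo07} and \cite{dym},'' with no argument supplied. So there is nothing in the present paper to compare against.

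That said, your proposal is correct and is essentially the argument one finds in \cite{ddjo07}. The identification $\ltwop C^*(\cu(W,M))\cong C^*(M;\ci(\ltwop))$, the cell-by-cell application of the Decomposition Theorem to $A^{S(c)}$, the observation that the structure maps in $\ci(\ltwop)$ are literal inclusions $A^{S(c)}\hookrightarrow A^{S(c')}$ and hence carry each $D^J$ identically into itself, and the resulting identification of the $J$-summand with $C^*(M,M^{S-J};\bR)\otimes D^J$ --- this is exactly the mechanism of the original proof. One small point worth making explicit: the $D^J$ are pairwise \emph{orthogonal} (not merely independent), which is what licenses calling the splitting a Hilbert direct sum and passing to reduced cohomology summand by summand; this orthogonality is part of what is established in \cite{ddjo07} alongside the Decomposition Theorem, so you are entitled to use it, but it is not literally contained in the statement of Theorem~\ref{t:ddjodecomp} as recorded here.
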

\begin{theorem}[{\cite[Thm.~13.8]{ddjo07}}]\label{t:ddjobldg}
	Suppose $(\cac,\gd)$ is a locally finite building of type $(W,S)$ with a chamber transitive automorphism group $G$ and that the thickness of $\cac$ is given by a multiparameter $\bp$ of integers.
	(In other words, each $s$-panel contains $p_s+1$ chambers.) Let $M$ be a mirrored CW complex over $S$ (with a $W$-finite mirror structure).
	Then
	\[
		L^2b^j(\cb(\cac,M),G)=\ltwop b^j(\cu(W,M)).
	\]
\end{theorem}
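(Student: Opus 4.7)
The plan is to identify the $G$-equivariant $L^2$-cochain complex of $\cb(\cac,M)$ with the weighted $L^2_\bp$-cochain complex $C^*(M;\ci(\ltwop))$ via a dimension-preserving bimodule isomorphism, and to match the coboundary operators on both sides.

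First, using that $G$ acts chamber-transitively, the quotient $\cb(\cac,M)/G$ is canonically $M$. Applying the general identification \eqref{e:coeff2}, I would write
\[
L^2C^*_G(\cb(\cac,M))\cong C^*(M;\ci),
\]
where the coefficient system $\ci$ assigns to an $n$-cell $c$ of $M$ with $S(c)=J$ the space $L^2(G)^{G_{R(c)}}$. Here $R(c)$ is the $J$-residue of $\cac$ corresponding to a chosen lift of $c$, and $G_{R(c)}$ is its stabilizer (the stabilizer of any cell of $\cb(\cac,M)$ lying over $c$, because two chambers in the same $J$-residue are glued together over $c$ by the equivalence relation in \eqref{e:real}).

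Second, and this is the crux, I would construct an isomorphism of Hilbert $\cN(G)$-modules
\[
L^2(G)^{G_{R(c)}}\ \longleftrightarrow\ \ltwop(W)\,a_J
\]
compatible with von Neumann dimensions. The mechanism is the Weyl distance $\gd(C_0,\cdot):\cac\to W$ from a base chamber $C_0$, which makes $\ell^2(\cac)$ a Hilbert $(\cN(G),\cnp(W))$-bimodule: the right $\cnp$-action is given by the Hecke operators $e_s$, and the identity $e_s\cdot e_w=e_{sw}$ or $p_se_{sw}+(p_s-1)e_w$ is precisely the averaging operator over an $s$-panel of size $p_s+1$. The subspace of $G_R$-fixed vectors in $L^2(G)$ (equivalently, the image of averaging over the chambers of the $J$-residue $R$) corresponds under this bimodule identification to multiplication by the idempotent $a_J$, which is well defined since $\bp\in\car(W_J)$ whenever $J$ is spherical.

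Third, I would check that under this identification the coboundary maps of the two complexes correspond. Both are determined by the face relations of cells in $M$ together with inclusions of residues $R(c)\subset R(c')$ when $c'$ is a face of $c$ with $S(c')\subsetneq S(c)$; on the Hecke-algebra side this translates to the inclusion $\ltwop(W)a_{S(c)}\hookrightarrow \ltwop(W)a_{S(c')}$ coming from $a_{S(c)}=a_{S(c')}a_{S(c)}$. Once the two cochain complexes are intertwined as right $\cnp(W)$-modules (or equivalently, left $\cN(G)$-modules via the bimodule $\ell^2(\cac)$), reduced cohomologies agree, and the normalization of the trace is such that $\dim_{\cN(G)}(-)=\dim_{\cnp}(-)$ term by term. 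Summing over cells gives the equality of $L^2$-Betti numbers.

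The main obstacle is the second step: verifying that the natural map $\pi_{C_0}:\cac\to W$ induces the claimed bimodule isomorphism and that the trace normalizations line up. This requires a careful computation showing that averaging an indicator function of a chamber over its $s$-panel in $\cac$ corresponds, via $\pi_{C_0}$, to the Hecke operator $e_s$ acting on $\ltwop(W)$ with weight $p_s$. All the rest — the identification of cells over $c$ with $J$-residues, matching of coboundaries, and recovery of the formula from Theorem~\ref{t:ddjo} applied to $\cu(W,M)$ — is then bookkeeping.
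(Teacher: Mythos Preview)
The paper does not prove Theorem~\ref{t:ddjobldg} itself (it is quoted from \cite{ddjo07}), but it does prove the closely analogous Theorem~\ref{t:bldg}, and the Remark following that theorem says the argument is a minor generalization of the original proof in \cite{ddjo07}. That argument is quite different from yours.

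Your route is a structural/bimodule identification: realise the $G$-equivariant $L^2$-cochains on $\cb(\cac,M)$ as $C^*(M;\ci)$ with $\ci(c)=L^2(G)^{G_{R(c)}}$, realise the weighted side as $C^*(M;\ci(\ltwop))$ with $\ci(\ltwop)(c)=\ltwop(W)a_{S(c)}$, and then try to match these through the $(\cN(G),\cnp(W))$-bimodule $\ell^2(\cac)$. The first identification is fine, and it is true that the cochain groups have equal dimensions term by term: with the normalisation $\mu(B)=1$ one has $\dim_{\cN(G)}L^2(G)^{G_R}=1/|R|=1/W_J(\bp)=\tr_{\cnp}(a_J)$. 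But equality of dimensions of cochain groups does not give equality of dimensions of reduced cohomology; for that you need the two complexes to be linked by a functor that preserves von Neumann dimension on \emph{subquotients}, i.e., you need the bimodule $\ell^2(\cac)$ to induce a dimension-preserving correspondence. You correctly flag this as ``the main obstacle,'' but the line ``the normalization of the trace is such that $\dim_{\cN(G)}(-)=\dim_{\cnp}(-)$ term by term'' is the assertion to be proved, not bookkeeping. Without this, the argument does not close.

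The paper's proof avoids the bimodule machinery entirely. It uses the folding map $\pi:\cb\to\cu$ to get an isometric embedding $\pi^*:\ltwop C^*(\cu)\hookrightarrow\ltwo C^*(\cb)$ (Lemma~\ref{l:vnsubalgebra}), and defines an explicit adjoint transfer $t$ with $t\circ\pi^*=\mathrm{id}$ (Lemma~\ref{l:randt}). The key point, proved by a short direct computation, is that \emph{both} $\pi^*$ and $t$ carry harmonic cocycles to harmonic cocycles; hence both commute with the orthogonal projection $P$ onto harmonic cocycles (Lemma~\ref{l:rp}). The Betti number on either side is the trace $\sum_c\langle\theta(e_c),e_c\rangle$ of the composite projection $\theta$, and the equalities $\pi^*(e_c)=e_c$, $\theta\pi^*=\pi^*\theta$, and $\langle\pi^*x,\pi^*y\rangle_\bq=\langle x,t\pi^*y\rangle_\bp=\langle x,y\rangle_\bp$ give the result in two lines. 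So the paper's argument is a direct trace computation on harmonic representatives rather than a categorical transfer; what your approach packages as a bimodule equivalence, the paper unpacks as the single concrete fact that $\pi^*$ and its adjoint preserve harmonicity.
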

\noindent (The group $G$ need not be discrete.
The von Neumann dimensions with respect to $G$ are defined using Haar measure on $G$, normalized so that the stabilizer of a chamber has measure 1.)
\begin{corollary}\label{cor:ddjofinite}
	Let $G$ be a graph product of finite groups $\{G_s\}_{s\in S}$.
	Let $\bp$ be the multiparameter defined by $p_s:=|G_s|-1$.
	Then
	\[
		L^2b^j(G)=\sum _{J\in \cs(W,S)} \ltwop b^j(K,K^{S-J}).
	\]
\end{corollary}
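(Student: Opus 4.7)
The strategy is to recognize the graph product $G$ as a chamber-transitive automorphism group of a building of type $(W_L,S)$ and then invoke Theorem~\ref{t:ddjobldg}. By Lemma~\ref{l:RAB}, $(G,\gd)$ is a locally finite building of type $(W_L,S)$, and $G$ acts on the chamber system by left translation, chamber-transitively (in fact simply transitively). Since the $s$-panels are the cosets $gG_s$, each contains $|G_s|=p_s+1$ chambers, so the thickness multiparameter is precisely the $\bp$ specified by $p_s=|G_s|-1$.

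Take $M=K$, the Davis chamber of $(W_L,S)$; its mirror structure is $W$-finite. The $M$-realization $\cb(G,K)$ then carries a proper, free, cocompact $G$-action with quotient $K$. I would verify that $\cb(G,K)$ is contractible, which gives $\cb(G,K)\simeq EG$ and hence
\[
L^2 b^j(G)=L^2 b^j(\cb(G,K);G).
\]
Contractibility can be extracted from Example~\ref{ex:cone} together with Lemma~\ref{l:spaces}(ii): after identifying $\cb(G,K)$ with the standard realization of the RAB arising from the polyhedral product $\piprod_L(\cone(G_s),G_s)$, acyclicity follows from Lemma~\ref{l:spaces}(ii); simple-connectivity is standard for such realizations (cf.~\cite{d09}).

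With these inputs in place, Theorem~\ref{t:ddjobldg} applied to $(\cac,\gd)=(G,\gd)$, the chamber-transitive group $G$ itself, and the mirror structure $M=K$ gives
\[
L^2 b^j(\cb(G,K);G)=\ltwop b^j(\cu(W,K)).
\]
To expand the right-hand side as a sum indexed by $\cs(W,S)$, I would then apply Theorem~\ref{t:ddjo} (or rather the cochain-level decomposition it rests on, using the Decomposition Theorem~\ref{t:ddjodecomp}): the coefficient system $\ci(\ltwop)$ on $K$ splits, via the idempotents $a_J$, so that the weighted $L^2$-cochains on $\cu(W,K)$ decompose as the direct sum over $J\in\cs(W,S)$ of relative complexes with constant coefficients supported by $D^J$. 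The $J$-summand computes $b^j(K,K^{S-J})\cdot\dim_{\cnp}D^J$, which is precisely $\ltwop b^j(K,K^{S-J})$ with the notation of the corollary. Assembling these pieces yields the stated formula.

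The main obstacle is verifying the chain of identifications rigorously, in particular that $\cb(G,K)$ is contractible (so that it models $EG$) and that the chamber-transitive, integer-thickness hypotheses of Theorem~\ref{t:ddjobldg} are genuinely met for the action of $G$ on itself; once these are in hand, the result is essentially a concatenation of Theorems~\ref{t:ddjobldg} and \ref{t:ddjo}. No delicate convergence condition on $\bp$ is needed for the identification $L^2 b^j(G)=\ltwop b^j(W)$; the splitting into the sum $\sum_J\ltwop b^j(K,K^{S-J})$ is purely a cochain-level statement and so is insensitive to whether $\bp^{-1}$ lies in $\ol{\car}$.
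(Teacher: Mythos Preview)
Your reduction to Theorem~\ref{t:ddjobldg} via Lemma~\ref{l:RAB} is exactly the paper's (implicit) argument: the corollary is stated without proof immediately after Theorem~\ref{t:ddjobldg}, and is simply the specialization $M=K$ applied to the RAB structure on $G$. Your verification that $\cb(G,K)$ is contractible (hence models $EG$) is correct and fills in a step the paper takes for granted.

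However, your final paragraph contains a genuine gap. The decomposition of $\ltwop b^j(\cu(W,K))$ as $\sum_J b^j(K,K^{S-J})\dim_{\cnp} D^J$ is \emph{not} a formal cochain-level identity: it is precisely the content of Theorem~\ref{t:ddjo}, which carries the hypothesis $\bp^{-1}\in\ol{\car}$, and that in turn rests on the Decomposition Theorem~\ref{t:ddjodecomp}, which requires $\bq\in\ol{\car}\cup\ol{\car^{-1}}$. Outside that range the subspaces $D^J$ need not span $\ltwo(W)$, and the orthogonal splitting of the cochain complex fails. So you cannot assert the sum formula for arbitrary $\bp$. The conclusion that follows unconditionally from Theorem~\ref{t:ddjobldg} alone is $L^2b^j(G)=\ltwop b^j(W)$, which is indeed how the result is phrased in item (3d) of the Introduction; the further expansion as a sum over $J$ is only available when $\bp^{-1}\in\ol{\car(W)}$.
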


As in the last paragraph of \S\ref{s:cohom}, there is a different method which can be used to define weighted $L^2$-Betti numbers by using ideas of L\"uck \cite{luck}.
As in \cite{luck} there is an equivalence of categories between the category of Hilbert $\cnp (W)$-modules (i.e.
$\cnp(W)$-stable closed subspaces of $\ltwop(W)^n$) and the category of ordinary projective modules for $\cnp(W)$.
This allows us to define a ``dimension,'' $\dim_{\cnp} M$, of a finitely generated, projective $\cnp(W)$-module which agrees with the dimension of the corresponding Hilbert $\cnp(W)$-mod\-ule.
The $\cnp(W)$-dimension of an arbitrary $\cnp(W)$-module is then defined to be the dimension of its projective part.

There is a coefficient system $\ci(\cnp)$ on the mirrored CW complex $M$ defined by
\[
	\ci(\cnp)(c):= \cnp(W)a_{S(c)}.
\]
The corresponding cohomology groups are denoted $H^*(M;\ci(\cnp))$.
The dimension of $H^j(M;\ci(\cnp))$ is equal to that of $H^j(M;\ci(\ltwop))$ (and they are both equal to $j^{th}$-weighted $\ltwop$-Betti number of $\cu(W,M)$).
(The advantage of using the coefficient system $\ci(\cnp)$ instead of $\ci(\ltwop)$ is that it is not necessary to use reduced cohomology and then have to keep taking closures of images.)

\section{Weighted \texorpdfstring{$L^2$}{L\texttwosuperior}-Betti numbers of graph products of Coxeter groups}\label{s:lgp}

As in \S\ref{ss:oct}, $(W_L,S)$ is the RACS associated to a graph $\gG$, $\{(V_s,T_s)\}_{s\in S}$ is a family of Coxeter systems and $(V,T)$ is the corresponding graph product of Coxeter systems.
Let $\bq$ be a multiparameter for $(V,T)$.
It restricts to a multiparameter for each $V_s$, which we will denote by the same letter.
By Lemma~\ref{l:RAB}, $V$ is a RAB of type $(W_L,S)$.

Let $\bp$ be the multiparameter for $(W_L,S)$ given by $p_s = V_s(\bq) -1$.
The following lemma shows that the growth series of $(V,T)$ and $(W_L,S)$ are related by a change of variables $\bq\to \bp$.
\begin{Lemma}\label{l:growth}
	For $w\in W_L$,
	\begin{equation}\label{l:growth1}
		\sum_{v\in \pi^{-1}(w)} \bq_v=\bp_w,
	\end{equation}
	and, therefore,
	\[
		V(\bq)=W(\bp).
	\]
\end{Lemma}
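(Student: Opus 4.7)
The plan is to prove \eqref{l:growth1} first; then $V(\bq) = W(\bp)$ follows immediately by summing both sides over $w\in W_L$ and using that $V$ is the disjoint union of the fibers $\pi^{\minus}(w)$.

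Fix $w \in W_L$ and a reduced expression $s_1\cdots s_n$ for $w$ in $(W_L,S)$. The key step is to establish a bijection
\[
  V_{s_1}^* \times \cdots \times V_{s_n}^* \;\longleftrightarrow\; \pi^{\minus}(w), \qquad (g_{s_1},\ldots,g_{s_n}) \;\longleftrightarrow\; g_{s_1}\cdots g_{s_n},
\]
together with the multiplicative length identity
\[
  l_T(g_{s_1}\cdots g_{s_n}) \;=\; \sum_{i=1}^n l_{T_{s_i}}(g_{s_i}),
\]
where $l_T$ and $l_{T_{s_i}}$ denote the word lengths in $(V,T)$ and $(V_{s_i},T_{s_i})$, respectively. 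Surjectivity of the map is built into the definition of $\pi$ recalled just before Lemma~\ref{l:RAB}; since $W_L$ is right-angled, any two reduced expressions for $w$ differ by a sequence of commutations $st\leftrightarrow ts$ with $\{s,t\}\in \edge(\gG)$, and each such commutation lifts to a commutation of factors in $V$, so we can reshape an arbitrary syllable decomposition of a given $v\in \pi^{\minus}(w)$ to match the prescribed pattern $s_1,\ldots,s_n$. Injectivity, together with the length identity, is the content of the normal-form theorem for graph products of groups specialized to the Coxeter setting. This is the step I expect to be the main obstacle, but it is well documented (for instance via the action of $V$ on its $\cat(0)$ Davis complex, where concatenated reduced expressions trace out geodesic galleries whose lengths cannot be shortened).

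Given the bijection and length identity, $\bq_v$ factors multiplicatively as $\bq_{g_{s_1}}\cdots\bq_{g_{s_n}}$ whenever $v = g_{s_1}\cdots g_{s_n}\in \pi^{\minus}(w)$, and the computation finishes in one line:
\[
  \sum_{v\in \pi^{\minus}(w)} \bq_v \;=\; \prod_{i=1}^n \sum_{g\in V_{s_i}^*} \bq_g \;=\; \prod_{i=1}^n \bigl(V_{s_i}(\bq) - 1\bigr) \;=\; \prod_{i=1}^n p_{s_i} \;=\; \bp_w.
\]
The last equality uses that $\bp_w = p_{s_1}\cdots p_{s_n}$ for any reduced expression for $w$, which is unambiguous because in a right-angled Coxeter system two reduced expressions differ only by commutations of generators, and these preserve the monomial. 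Summing \eqref{l:growth1} over $w\in W_L$ then yields $V(\bq) = \sum_{w\in W_L}\bp_w = W(\bp)$, as claimed.
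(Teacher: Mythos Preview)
Your proof is correct and follows essentially the same approach as the paper: fix a reduced expression $s_1\cdots s_n$ for $w$, use the bijection $\pi^{-1}(w)\leftrightarrow V_{s_1}^*\times\cdots\times V_{s_n}^*$ together with the factorization $\bq_v=\bq_{g_{s_1}}\cdots\bq_{g_{s_n}}$, and conclude that the fiber sum equals $\prod_i(V_{s_i}(\bq)-1)=\bp_w$. The paper simply asserts the bijection and the multiplicativity of $\bq_v$ (appealing to the definition of $\pi$ and the normal form in graph products), whereas you supply more justification for these points, but the argument is the same.
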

\begin{proof}
	Let $s_1\cdots s_n$ be a reduced expression for $w\in W$ and let $v\in \pi^{-1}(w)$.
	Then $v$ factors as a product $v_{s_1}\cdots v_{s_n}$, with $v_{s_i} \in V_{s_i}^*$, and this factorization gives one-to-one correspondence between $\pi^{-1}(w)$ and $V_{s_1}^* \times \cdots \times V_{s_n}^*$; moreover, $\bq_v=\bq_{v_{s_1}}\cdots \bq_{v_{s_n}}$.
	(Recall from \S\ref{ss:spaces} that $V^*_s=V_s-\{1\}$.) Hence, the growth series of $\pi^{-1}(w)$ is the product of the growth series of the $V_{s_i}^*$, and the result follows.
\end{proof}

In the first subsection we compute the weighted $L^2$-Betti numbers of $V$ in the case where $\bq\notin \car(V_s)$ for each $s\in S$.
Notice that this necessarily entails that each $V_s$ is infinite.
The proof uses the spectral sequence of \S\ref{s:ss} in the same way as in \S\ref{ss:cgfree}.
In the second subsection we consider the opposite situation where $\bq\in \ol{\car(V_s)}$ for each $s\in S$.
For example, this holds for all $\bq$ when each $V_s$ is finite.
In this case the proofs are based on arguments from \cite{ddjo07}.

\subsection{Large weights}\label{largeweight}
In this subsection \emph{we assume $\bq\notin \car(V_s)$ for each $s\in S$}.
\begin{theorem}\label{t:largeq}
	\[
		\ltwo b^n(V)=\sum_{\substack{i+j=n\\J\in \cs(W,S)}} b^i(K_J,\partial K_J)\cdot \ltwo b^j(V_J),
	\]
	where
	\[
		\ltwo b^j(V_J)=\prod _{\substack{\sum k(s)=k\\s\in J}} \ltwo b^{k(s)}(V_s).
	\]
\end{theorem}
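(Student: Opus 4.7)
The plan is to mirror the proof of Theorem~\ref{t:lgraph} (the $L^2$-Betti number formula for graph products of infinite groups), using the spectral sequence of \S\ref{s:ss} applied to an appropriate poset-of-spaces structure on the Davis chamber $K(V,T)$, together with the coefficient system $\ci(\ltwo)$ from \S\ref{ss:betti}.  The role played by ``$G_s$ is infinite'' in the proof of Theorem~\ref{t:lgraph} is now played by the hypothesis $\bq\notin\car(V_s)$, which forces the idempotent $a_{T_s}\in\cn(V_s)$, and hence the subspace $A^{T_s}_{V_s}=\ltwo(V_s)a_{T_s}$, to vanish; this is the weighted analog of the vanishing of $V_s$-invariants in $\ltwo(V_s)$.

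First, by Lemma~\ref{l:commute} the Davis chamber decomposes as the polyhedral product $K(V,T)=\piprod_L\bigl(K(V_s,T_s),\partial K(V_s,T_s)\bigr)$, so that setting
\[
Z'_J:=\prod_{s\in J}K(V_s,T_s)\times\prod_{s\notin J}\partial K(V_s,T_s)\qquad(J\in\cs(L)=\cs(W,S))
\]
yields a poset of subcomplexes of $K(V,T)$ with $Z'_I\subset Z'_J\iff I\subset J$ and $Z'_I\cap Z'_J=Z'_{I\cap J}$, i.e.\ a poset of spaces over $\cs(L)$ with nerve $\flag(\cs(L))$.  I would then apply Lemma~\ref{l:ss} to this cover with the coefficient system $\ci(\ltwo)$, so that the spectral sequence converges to $H^*(K(V,T);\ci(\ltwo))=\ltwo H^*(V)$, and the $E_1$-term is built from the groups $H^j(Z'_J;\ci(\ltwo))$.

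The key computation of these $E_1$-terms proceeds via the K\"unneth formula for weighted $L^2$-cohomology of direct products of Coxeter systems, which in particular yields
\[
\ltwo H^j(V_J)=\bigoplus_{\sum k(s)=j}\bigotimes_{s\in J}\ltwo H^{k(s)}(V_s),
\]
and factors $H^*(Z'_J;\ci(\ltwo))$ as a tensor product, with factors $\ltwo H^*(V_s)$ for $s\in J$ and analogous contributions from $\partial K(V_s,T_s)$ for $s\notin J$.  The crucial next step is to verify condition (Z) of \S\ref{s:ss}: that the restriction $H^*(Z'_J;\ci(\ltwo))\to H^*(Z'_{<J};\ci(\ltwo))$ is zero for every $J\in\cs(L)$.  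This is the weighted analog of Lemma~\ref{l:Kunneth}, proved by induction on $|J|$ using the relative K\"unneth formula, and it reduces to showing that for each $s$ with $\bq\notin\car(V_s)$, the restriction $H^*(K(V_s,T_s);\ci(\ltwo_{V_s}))\to H^*(\partial K(V_s,T_s);\ci(\ltwo_{V_s}))$ is surjective; the only obstruction to surjectivity sits in the coefficient group $\ltwo(V_s)a_{T_s}=A^{T_s}_{V_s}$, which is trivial precisely when $V_s(\bq)=\infty$.  Lemma~\ref{l:main} then forces collapse at $E_2$ and the direct sum decomposition indexed by $J\in\cs(L)$, and taking $\cn(V)$-dimensions of the resulting summands produces the claimed formula.

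The principal obstacle is the K\"unneth factorization of $H^*(Z'_J;\ci(\ltwo))$, since the coefficient system is built from $\ltwo(V)$ which does not split as $\bigotimes_s\ltwo(V_s)$ --- $V$ is a graph product, not a direct product.  One must check that the restriction of $\ci(\ltwo)$ to $Z'_J$ nonetheless admits the required tensor decomposition, using that every cell type appearing in $Z'_J$ lies in $T(J)=\bigsqcup_{s}T_s$ and so generates the direct-product subgroup $V_J=\prod_{s\in J}V_s\subseteq V$ for which K\"unneth applies in the expected way.  Once this compatibility is established, the remainder of the argument is a direct translation of the proofs of Theorems~\ref{t:gpfree} and~\ref{t:lgraph}.
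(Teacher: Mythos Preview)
Your overall strategy---apply the spectral sequence of \S\ref{s:ss} to a poset of spaces indexed by $\cs(W,S)$, verify condition~(Z) via a K\"unneth-type argument whose input is the vanishing of degree-zero weighted $L^2$-cohomology when $\bq\notin\car(V_s)$, then invoke Lemma~\ref{l:main}---is exactly the paper's strategy.  However, the paper implements it on the Davis \emph{complex} $Y=\cu(V,\ck)$ rather than on the Davis \emph{chamber} $\ck$, and with a different poset of subspaces: it sets $Y'_J:=\cu(V_J,\ck(J))=\prod_{s\in J}\cu(V_s,K(V_s,T_s))$ and $Y_J:=V\times_{V_J}Y'_J$, exactly parallel to the proof of Theorem~\ref{t:gpfree}.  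The cohomology in play is then $H^*_V(Y_J;\cn(V))\cong H^*_{V_J}(Y'_J;\cn(V_J))\otimes_{\cn(V_J)}\cn(V)$, and since $Y'_J$ is literally a finite product of $V_s$-spaces and $V_J$ is the direct product of the $V_s$, the relative K\"unneth Formula applies without any obstacle.  The single-factor input is $H^0_{V_s}(Y'_s;\cn(V_s))=0$, which follows from Dymara's Theorem~\ref{t:dym} once $\bq\notin\car(V_s)$; then Lemma~\ref{l:Kunneth} (with $\cn$ in place of $\zz G$) gives condition~(Z) verbatim.

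By contrast, your choice to work on $\ck$ with the polyhedral-product cover $\{Z'_J\}$ and the coefficient system $\ci(\ltwo)$ creates precisely the difficulty you flag: for $s\notin J$ the factor $\partial K(V_s,T_s)$ contributes nontrivially to $S(c)$, so the coefficient $\ltwo(V)a_{S(c)}$ involves all of $T$, and the needed tensor factorisation of $H^*(Z'_J;\ci(\ltwo))$ over $s\in S$ is not automatic (your parenthetical ``every cell type appearing in $Z'_J$ lies in $T(J)$'' is in fact false for $s\notin J$).  This is not fatal, but resolving it would essentially force you back to the equivariant picture on $\cu(V,\ck)$ anyway.  The paper's route sidesteps the issue completely by taking basepoints $b_s$ (with $S(b_s)=\emptyset$) rather than boundaries for $s\notin J$, so that only the $s\in J$ factors carry cohomological content.
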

\begin{proof}[Proof of Theorem~\ref{t:largeq}] The proof is almost the same as the proof of Theorem~\ref{t:gpfree}.
	Since $(V,T)$ is a Coxeter system, we prefer to use its natural action on its Davis complex rather than on $EV$.
	Let $Y:=\cu(V,\ck)$ be the Davis complex and for each $J\in \cs(W,S)$, put
	\[
		Y'_J:=\cu(V_J,\ck(J)), \quad Y_J:=V\times _{V_J} Y'_J.
	\]
	As before,
	\[
		Y'_J=\prod_{s\in J}Y'_s,\quad \text{where}\quad Y'_s:=\cu(V_s,\ck(s)).
	\]
	Then $\cv=\{Y_J\}_{J\in \cs(W,S)}$ is a poset of spaces on $Y$.
	The spectral sequence of \S\ref{s:ss} has $E_1^{i,j}=C^j(K;\ch^j(\cv))$, where the coefficient system is defined by $\gs\mapsto H^j_V(Y_{\min \gs};\cn (V))$.
	It converges to $H^*_V(Y;\cn (V))$ and the $\cn$-dimensions of these cohomology spaces are the $\ltwo$-Betti numbers.
	Since $\bq\notin \car(V_s)$ for each $s\in S$, $H^0_{V_s}(Y'_s;\cn (V_s))=0$ by \cite{dym}.

	and the relative K\"unneth Formula gives that
	\[
		H^*_{V_J}(Y'_J;\cn (V_J))\to H^*_{V_J}(Y'_{<J};\cn (V_J))
	\]
	is the zero map.
	By Lemma~\ref{l:main},
	\[
		E_2^{i,j}=\bigoplus_{J\in \cs(W,S)}H^i(K_J,\partial K_J)\otimes \cn (V)
	\]
	and the spectral sequence degenerates at $E_2$.
	Taking von Neumann dimensions, we get the formula for weighted $L^2$-Betti numbers.
	The last formula also follows from the K\"unneth Formula.
\end{proof}

We also have a weighted version of Theorem~\ref{t:other}.
Let $V'$ denote the direct sum $\prod_{s\in S}V_s$.
A Coxeter system $(V'',T)$ \emph{lies between $V$ and $V'$} if its presentation is given by changing certain entries of its Coxeter matrix from $\infty$ to an even integers $\ge 2$ , more specifically, for any $(t_1, t_2)$ with $t_i\in T_{s_i}$ and $\{s_1,s_2\}\notin \edge (\gG)$, we are allowed change $m(t_1, t_2)$ from $\infty$ to an even integer.
Note that a multiparameter $\bq$ for $(V,T)$ is also a multiparameter for $(V',T)$ and for $(V'', T)$.
The proof of Theorem~\ref{t:largeq} also gives the following.
\begin{theorem}[cf.~Theorem~\ref{t:other}]\label{t:otherw}
	\[
		\ltwo b^n(\cu(V'', \ck))=\sum_{\substack{i+j=n\\J\in \cs}} b^i(K_J,\partial K_J)\cdot \ltwo b^j(V_J).
	\]
\end{theorem}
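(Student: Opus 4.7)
The plan is to adapt the proof of Theorem~\ref{t:largeq} essentially verbatim, using the fact that the intermediate Coxeter system $(V'',T)$ agrees with $(V,T)$ on every special subgroup indexed by a spherical subset of $(W_L,S)$.

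First I would check that $\ck$ carries a $V''$-finite mirror structure, so that $Y'' := \cu(V'',\ck)$ is a well-defined locally finite complex even though $\ck$ need not coincide with the canonical Davis chamber $K(V'',T)$: since $V''$ has at least as many relations as $V$, every subset of $T$ spherical in $V$ is also spherical in $V''$. The key structural point is then that, for any $J \in \cs(W_L,S)$, the set $J$ is a clique of $\gG$, so every pair $\{s_1,s_2\} \subseteq J$ lies in $\edge(\gG)$; but the transition from $V$ to $V''$ only alters $m(t_1,t_2)$ for $t_i \in T_{s_i}$ with $\{s_1,s_2\} \notin \edge(\gG)$. Hence the standard parabolic subgroup $V''_{T(J)}$ equals $V_J = \prod_{s\in J}V_s$, and the Davis chamber $\ck(J)$ is unchanged, so $\cu(V''_{T(J)}, \ck(J)) = Y'_J$ in the notation of the proof of Theorem~\ref{t:largeq}.

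Next I would set up the cover $\cv = \{Y_J''\}_{J\in\cs(W_L,S)}$ of $Y''$ by $Y_J'' := V'' \times_{V_J} Y'_J$. The polyhedral decomposition $\ck = \piprod_L \bigl(K(V_s,T_s), L(V_s,T_s)\bigr)$ from \S\ref{ss:oct} shows that $\cv$ is a poset of spaces over $\cs(W_L,S)$ with nerve $K$. I would then run the spectral sequence of \S\ref{s:ss} with $N = \cn_\bq(V'')$, whose $E_1^{i,j}$-term is $C^i(K; \ch^j(\cv))$ and whose stalk at a simplex with minimum vertex $J$ is $H^j_{V_J}(Y'_J;\cn_\bq(V''))$. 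To verify condition (Z), I would reuse the Künneth-style argument from the proof of Theorem~\ref{t:largeq}: since $\bq \notin \car(V_s)$ for every $s$, Theorem~\ref{t:dym} gives $H^0_{V_s}(Y'_s;\cn_\bq(V_s)) = 0$, and the relative Künneth formula (cf.\ Lemma~\ref{l:Kunneth}) forces $H^*_{V_J}(Y'_J;\cn_\bq(V_J)) \to H^*_{V_J}(Y'_{<J};\cn_\bq(V_J))$ to vanish. By Lemma~\ref{l:main}, the spectral sequence degenerates at $E_2$, yielding
\[
E_2^{i,j} = \bigoplus_{J\in\cs(W_L,S)} H^i(K_J,\partial K_J) \otimes H^j_{V_J}(Y'_J;\cn_\bq(V'')).
\]
Taking $\cn_\bq(V'')$-dimensions and identifying $\dim_{\cn_\bq(V'')} H^j_{V_J}(Y'_J;\cn_\bq(V''))$ with $\ltwo b^j(V_J)$ gives the stated formula.

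The main obstacle is the base-change step: one must justify that the vanishing and dimension computations for $H^*_{V_J}(Y'_J;\cn_\bq(V_J))$ transfer correctly to the coefficient module $\cn_\bq(V'')$. This rests on the Hecke-algebraic fact that $\ltwoq(V'')$ decomposes as a Hilbert direct sum of copies of $\ltwoq(V_J)$ indexed by the set $V^{T(J)}$ of $T(J)$-reduced elements of $V''$, so that the $\cn_\bq(V_J)$-module structure on $\cn_\bq(V'')$ is induced in a trace-preserving way from that on $\cn_\bq(V_J)$. Granted this, the surjectivity of the relative Künneth maps (which is what Lemma~\ref{l:Kunneth} really proves) is preserved, condition (Z) holds, and the dimension of $H^j_{V_J}(Y'_J;\cn_\bq(V''))$ equals that of $H^j_{V_J}(Y'_J;\cn_\bq(V_J))$, namely $\ltwo b^j(V_J)$.
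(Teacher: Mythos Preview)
Your proposal is correct and follows exactly the approach the paper indicates: the paper's entire ``proof'' is the single sentence preceding the theorem, namely that ``The proof of Theorem~\ref{t:largeq} also gives the following.'' You have in fact supplied more detail than the paper does, correctly isolating the key structural point (that $V''_{T(J)}=V_J$ for every spherical $J\in\cs(W_L,S)$ because the passage from $V$ to $V''$ only modifies Coxeter-matrix entries $m(t_1,t_2)$ with $\{s_1,s_2\}\notin\edge(\gG)$) and flagging the base-change issue for $\cn_\bq(V'')$ over $\cn_\bq(V_J)$, which the paper leaves implicit.
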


\subsection{Small Weights}\label{smallweight}

Throughout this subsection we suppose that the multiparameter $\bq$ is ``small'' in the sense that $\bq \in \car(V_s)$ for each $s\in S$.
Let $M$ be a mirrored CW complex over $S$ and let $\cb(V,M)$ be the $M$-realization of $V$ defined by \eqref{e:real} of \S\ref{ss:real}.

As before, define a measure $\muq$ on the set of cells of $\cb(V,M)$ by putting $\muq(c):=\bq_v$, where $v\in V$ is the shortest element such that $vc$ lies in the base chamber $M$.
Again, we get a cochain complex, $\ltwo C^*(\cb(V,M))$.
(If $Y$ is a mirrored CW complex over $T$, then associated to the Coxeter system $(V,T)$ there is a different cochain complex, $\ltwo C^*(\cu(V,Y))$.) There is a coefficient system $\ci(\cn)$ on $M$ defined by
\[
	\ci(\cn)(c):= \cn(V)a_{\pi^\minus (S(c))}
\]
and the $\cn(V)$-dimension of $H^j(M;\ci(\cn))$ is $\ltwo b^j (\cb(V,M))$.

Let $K$ and $\ck$ denote the geometric realizations of $\cs(W,S)$ and $\cs(V,T)$, respectively.
\begin{theorem}\label{t:UV}
	\qquad \( \ltwo b^*(\cu(V,\ck))=\ltwo b^*(\cb(V,K)).
	\)
\end{theorem}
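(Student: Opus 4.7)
The plan is to identify the weighted $L^2$-cochain complexes on the two sides with cochain complexes on a common combinatorial base indexed by $\cs(L)$, and compare them using the polyhedral product structure of $\ck$ together with Dymara's vanishing theorem (Theorem~\ref{t:dym}).

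Using the coefficient-system description at the end of Section~\ref{ss:betti}, one has $\ltwo H^*(\cu(V,\ck)) \cong H^*(\ck;\ci(\cn))$ with $\ci(\cn)(c) = \cn(V) a_{T(c)}$, and $\ltwo H^*(\cb(V,K)) \cong H^*(K;\ci'(\cn))$ with $\ci'(\cn)(c) = \cn(V) a_{\pi^{-1}(S(c))}$. By Lemma~\ref{l:commute} and Example~\ref{ex:01}, the base chambers factor as polyhedral products $\ck = \piprod_L(\ck(s),\cl(s))$ and $K = \piprod_L([0,1],\{0\})$, and the cone structure on each $\ck(s) = \cone(\cl(s))$ supplies radial projections $\phi_s : (\ck(s),\cl(s)) \to ([0,1],\{0\})$ sending the cone vertex to $1$ and $\cl(s)$ to $0$. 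These assemble to a $V$-equivariant map $\psi : \cu(V,\ck) \to \cb(V,K)$.

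The core step is a Mayer--Vietoris spectral sequence comparison. The polyhedral product structure yields, on both sides, a $\cs(L)$-indexed cover by subcomplexes whose $J$-piece is (equivariantly) a product of factors corresponding to $s\in J$ and $s\notin J$. Condition (Z) of Section~\ref{s:ss} should follow by combining the relative join argument of Lemma~\ref{l:jKunneth} with Dymara vanishing. The contribution of the $J$-stratum to $\ltwo H^*(\cu(V,\ck))$ involves the weighted $L^2$-cohomology of $\cu(V_J,\ck(J)) = \prod_{s \in J} \cu(V_s,\ck(s))$, and since $\bq \in \car(V_s) \subseteq \car(V_J)$ for each $s \in J$, Theorem~\ref{t:dym} gives that $\ltwo H^*(V_s)$ is concentrated in degree $0$ with $\cn$-dimension $1/V_s(\bq)$. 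The K\"unneth formula then shows the $J$-stratum contributes $\cn(V)$-dimension $\prod_{s\in J} 1/V_s(\bq) = 1/V_J(\bq)$ in degree $0$ only, which agrees with the contribution on the $K$-side, where $\cn(V)a_{\pi^{-1}(J)}$ has the same dimension. The two spectral sequences thus have isomorphic $E_2$-terms and degenerate there, yielding the desired equality of weighted $L^2$-Betti numbers.

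The main obstacle will be verifying condition (Z) in the weighted $L^2$-setting and pinning down the precise isomorphism at the $E_2$ level, in particular checking that the map $\psi$ correctly matches the coefficient idempotents $a_{T(c)}$ and $a_{\pi^{-1}(S(c))}$ on corresponding strata. A related subtlety is that Dymara's theorem concerns reduced $L^2$-cohomology, so the passage to the $\cn(V)$-module formulation (necessary for the spectral sequence argument to avoid repeated closures) requires L\"uck's dimension theory for arbitrary $\cn(V)$-modules.
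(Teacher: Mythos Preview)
Your proposal is correct in spirit and uses the same two ingredients as the paper---the spectral sequence of \S\ref{s:ss} and Dymara's vanishing (Theorem~\ref{t:dym})---but it is considerably more roundabout than the paper's argument.

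The paper runs only \emph{one} spectral sequence, for $\cu(V,\ck)$ with its cover $\{Y_J\}_{J\in\cs(W,S)}$. Since $\bq\in\car(V_s)$ for each $s$, Theorem~\ref{t:dym} forces $H^j_{V_J}(Y'_J;\cn(V_J))=0$ for all $j>0$, so at $E_1$ every row with $j\neq 0$ vanishes. The surviving row $E_1^{*,0}$ is then identified \emph{directly} with the cochain complex $C^*(K;\ci(\cn))$, because $H^0_{V_J}(Y'_J;\cn(V))=\cn(V)a_{\pi^{-1}(J)}$ is exactly the coefficient system $\ci(\cn)$ defining $\ltwo b^*(\cb(V,K))$. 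That is the entire proof: no second spectral sequence, no comparison map $\psi$, no condition~(Z), and no appeal to Lemma~\ref{l:jKunneth}.

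Two of your anticipated obstacles therefore evaporate. Condition~(Z) is irrelevant: a first-quadrant spectral sequence concentrated in a single row degenerates at $E_2$ automatically, since every $d_r$ with $r\ge 2$ has nonzero vertical component. And there is no need to match idempotents along a geometric map $\psi$, because the identification happens at the level of the $E_1$ coefficient system, not via a map of spaces. Your invocation of Lemma~\ref{l:jKunneth} is also off-target: that lemma concerns ordinary reduced cohomology of polyhedral joins and plays no role here. What you would gain from your route is a more symmetric picture, but at the cost of extra verification; the paper's shortcut is to notice that after Dymara the $E_1$ page \emph{is} the target cochain complex.
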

\begin{proof}
	We use the same spectral sequence as in the proof of Theorem~\ref{t:largeq}.
	It converges to $H^*_V(\cu(V,\ck);\cn (V))$ and has $E_1$-term:
	\[
		E^{i,j}_1= C^j(K;\ch^j(\cv)).
	\]
	where the coefficient system is defined by $\gs\mapsto H^j_V(X_{\min \gs};\cn (V))$.
	Since $\bq\in \car(V_s)$, for each $s\in S$, by Dymara's result, Theorem~\ref{t:dym}, the coefficients are nonzero only for $j=0$.
	For $j=0$ the coefficient system is associated to the poset of coefficients $J\mapsto \cn a_{\pi^\minus(J)}$.
	In \S\ref{ss:betti} we denoted this coefficient system by $\ci(\cn)$.
	So, $E^{i,0}_1$ is the cochain complex $C^j(K;\ci(\cn))$, in other words, it is the cochain complex whose cohomology gives the $b^*_\bq(\cb(V,K))$.
	Thus,
	\[
		b^*_\bq(\cu(V,\ck))=b^*_\bq(\cb(V,K)).
		\qedhere
	\]
\end{proof}
\begin{Remark}
	Suppose each $V_s$ is finite.
	Then each link $\cl(s)$ is a simplex, and it follows that the natural map $\cl\to L$, induced by $\pi$, has contractible fibers.
	It follows that $\ck$ deformation retracts to $K$, respecting the mirror structure.
	This deformation retraction induces a $V$-equivariant, proper homotopy equivalence $\cu(V,\ck)\to \cb(V,K)$.
	So, when each $V_s$ is finite, Theorem~\ref{t:UV} is the expected result.
\end{Remark}
\begin{theorem}\label{t:bldg}
	\( \ltwo b^*(\cb(V,M))=\ltwop b^*(\cu(W,M)).
	\)
\end{theorem}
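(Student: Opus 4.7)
The plan is to follow the template of \cite[Theorem 13.8]{ddjo07}, expressing both sides as von Neumann dimensions of cohomology of $M$ with coefficients in certain systems of Hilbert modules, and then constructing a trace-preserving map that identifies the two systems up to extension of scalars. Concretely, the right-hand side is already set up as
\[
\ltwop b^j(\cu(W,M))=\dim_{\cnp}H^j(M;\ci(\cnp)),\qquad \ci(\cnp)(c)=\cnp(W_L)a_{S(c)}.
\]
For the left-hand side I would set up the analogous description $\ltwo b^j(\cb(V,M))=\dim_{\cn}H^j(M;\ci(\cn))$ with $\ci(\cn)(c)=\cn(V)a_{\pi^{-1}(S(c))}$, using that $V$ acts chamber-transitively on $\cb(V,M)$ and that the stabilizer of a face $c$ of the fundamental chamber is the subgroup $V_{\pi^{-1}(S(c))}$.

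The core construction is a trace-preserving $*$-homomorphism $\phi:\cnp(W_L)\to\cn(V)$ satisfying $\phi(a_s)=a_{T_s}$ for each $s\in S$. At the Hecke algebra level, $\phi$ is forced to exist because the only relations among the $a_s$ in the right-angled setting are $a_s^2=a_s$ and the commutations $a_sa_t=a_ta_s$ for $\{s,t\}\in\edge(\gG)$, and both relations are preserved: $a_{T_s}$ is a projection in $\bR_\bq(V)$ (since $\bq\in\car(V_s)$ by hypothesis), and $a_{T_s}$ commutes with $a_{T_t}$ whenever $V_s$ and $V_t$ commute in $V$, i.e., whenever $\{s,t\}\in\edge(\gG)$. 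For each spherical $J$ one then gets $\phi(a_J)=\prod_{s\in J}a_{T_s}=a_{\pi^{-1}(J)}$, and Lemma~\ref{l:growth} applied to the direct product $V_J=\prod_{s\in J}V_s$ yields
\[
\tr_{\cnp}(a_J)=\frac{1}{W_J(\bp)}=\frac{1}{V_J(\bq)}=\tr_{\cn}(a_{\pi^{-1}(J)}).
\]
Trace agreement on this generating family of projections extends $\phi$ to the Hecke--von Neumann algebras via a standard GNS argument.

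With $\phi$ in hand, the two coefficient systems are related cell-by-cell by the natural map
\[
\cn(V)\otimes_{\cnp(W_L)}\cnp(W_L)a_{S(c)}\;\longrightarrow\;\cn(V)a_{\pi^{-1}(S(c))},
\]
which is an $\cn(V)$-module isomorphism because $\phi(a_{S(c)})=a_{\pi^{-1}(S(c))}$. These isomorphisms are natural in $c$ and intertwine the restriction maps used in the cochain differentials (which come from the inclusions $a_{S(c)}\subset a_{S(c')}$ for cofaces $c\subset c'$). Thus the $\cn(V)$-cochain complex on the left is obtained from the $\cnp(W_L)$-cochain complex on the right by tensoring with $\cn(V)$ along $\phi$. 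By L\"uck's dimension theory \cite{luck}, extension of scalars along the trace-preserving inclusion $\phi$ preserves von Neumann dimensions of cohomology, giving the desired equality.

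The main obstacle is establishing the trace-preserving extension of $\phi$ to the level of von Neumann algebras (the Hecke-algebra relation check is routine, but the extension requires $L^2$-boundedness) and, more delicately, verifying that the resulting extension-of-scalars identification is genuinely compatible with the coboundaries — so that one obtains equality of cohomological dimensions and not merely of modules cell-by-cell. The heart of the matter, which is ultimately what makes every dimension match, is Lemma~\ref{l:growth}.
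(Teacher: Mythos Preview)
Your approach is correct in outline and genuinely different from the paper's. Both arguments begin by constructing the same trace-preserving embedding $\phi=\pi^*:\cnp(W_L)\hookrightarrow\cn(V)$ with $\phi(a_s)=a_{T_s}$ (the paper does this in Lemma~\ref{l:vnsubalgebra}), but they diverge from there. The paper works on the Hilbert-module side: it builds the transfer map $t$ adjoint to $\pi^*$, checks that both $\pi^*$ and $t$ preserve harmonic cocycles (Lemma~\ref{l:randt}), verifies that $\pi^*$ intertwines the projections onto harmonics (Lemma~\ref{l:rp}), and then computes the Betti numbers directly as traces $\sum_c\langle\theta(e_c),e_c\rangle$, using $t\pi^*=\mathrm{id}$ to move the computation back and forth between the two complexes. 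Your route instead stays on the module-theoretic side: you identify the $\cn(V)$-cochain complex as the induction of the $\cnp(W_L)$-cochain complex along $\phi$ and invoke dimension-flatness of induction along a trace-preserving inclusion of finite von Neumann algebras. What your approach buys is conceptual economy---no harmonic cocycles, no transfer map, no adjoint calculation---at the cost of the black box you flag at the end: one must actually prove that induction along $\phi$ preserves $\dim$ of cohomology (equivalently, that for a matrix $F$ over $\cnp(W_L)$ the support projection of $F^*F$ is the same computed in either algebra, which follows from functional calculus and trace preservation, but is not a result one can simply cite from \cite{luck} for Hecke--von~Neumann algebras). The paper's hands-on argument avoids this by never leaving the Hilbert space; it is more self-contained, while yours is more structural.
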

\begin{Remark}
	If each $V_s$ is finite and $\bq=\bone$, then $V$ is a locally finite building of type $(W,S)$ of thickness $\bp_s=|V_s|-1$; so this reduces to Theorem~\ref{t:ddjobldg} (i.e., \cite[Thm.~13.8]{ddjo07} or \cite[Thm.~20.8.4]{dbook}).
	The key point of the proof, which goes back to \cite{dym}, is that the folding map $\pi$ pulls back $p$-weighted harmonic cochains to $q$-weighted harmonic cochains.
	The proof of Theorem~\ref{t:bldg} is a minor generalization of the proof of \cite[Thm.~13.8]{ddjo07} to locally infinite buildings.
	It occupies the end of this subsection.
\end{Remark}
\begin{example}
	Figure~\ref{f:bldg} depicts the folding map $\pi:\cb(V,M) \to \cu(W,M)$ in the case of the free product of two infinite dihedral groups.
	Here the graph $\gG$ is two disjoint points $s$ and $t$, so $W$ is $\bD_{\infty}$ generated by $s$ and $t$.
	The vertex groups are also $\bD_{\infty}$, generated by $\{s^+, s^-\}$ and $\{t^+, t^-\}$, respectively.
	So, $V=\bD_{\infty}*\bD_{\infty}$, and we let $M$ be a segment $K$.
\end{example}
\begin{figure}[ht]
	\newcommand{\lvld}{\textwidth/6}
	\begin{tikzpicture}[scale=1,level distance=\lvld, level 1/.style={sibling distance=24mm}, level 2/.style={sibling distance=5mm}, grow=left]

		\draw[dotted,thick] (-\lvld/2,30mm) --++(0,15pt); \draw[dotted,thick] (-\lvld/2,-30mm) --++(0,-15pt); \draw[dotted,thick] (3*\lvld/2,30mm) --++(0,15pt); \draw[dotted,thick] (3*\lvld/2,-30mm) --++(0,-15pt); \coordinate child foreach \x in {t^+t^-,t^+,t^-,t^-t^+} {child foreach \y in {s^+s^-,s^+,s^-,s^-s^+} { node { $\bq^{\x\y }$}} edge from parent node[below,pos=0.6] {$\bq^{\x}$ }}; \coordinate child[level 2/.style={sibling distance=24mm}, level 3/.style={sibling distance=5mm}, grow=right] {child foreach \x in {s^+s^-,s^+,s^-,s^-s^+} { child foreach \y in {t^+t^-,t^+,t^-,t^-t^+} { node { $\bq^{\x\y }$}} edge from parent node[below,pos=0.6] {$\bq^{\x}$ }} edge from parent node[above] {$\bq^e$}};

		\foreach \x in {-2,...,3} \draw[xshift=\lvld *\x , yshift=-53mm] (0pt,-2pt) -- (0pt,2pt); \foreach \x/\y in {-2/ts,-1/t,0/e,1/s,2/st} \draw[xshift=\lvld *\x , yshift=-53mm] (0,0) -- node[below]{$\bp^{\y}$}(\lvld ,0pt);

		\draw[->] (\lvld/2, -10mm)--node[auto=left] {$\pi$} (\lvld/2, -43mm);
	\end{tikzpicture}
	\caption{$\bD_{\infty}*\bD_{\infty}$ as a locally infinite building over $\bD_{\infty}$.\label{f:bldg}
	}
\end{figure}
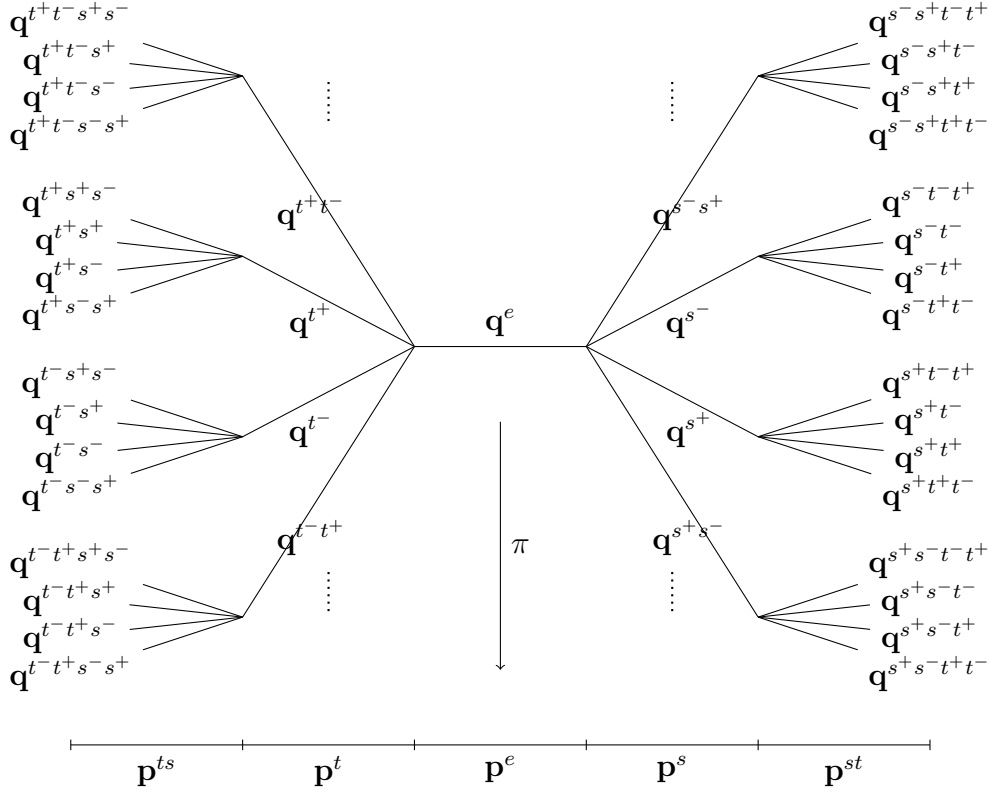

Combining the two previous theorems we get the following.
\begin{theorem}\label{c:main}
	\quad \( \ltwo b^*(V)=\ltwop b^*(W).
	\)
\end{theorem}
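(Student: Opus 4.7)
The plan is to derive Theorem~\ref{c:main} as a one-line corollary by concatenating the two preceding theorems. By definition, $\ltwo b^*(V)$ means the $\bq$-weighted $L^2$-Betti numbers of the Davis complex $\cu(V,\ck)$ of the Coxeter system $(V,T)$, and $\ltwop b^*(W)$ means the $\bp$-weighted $L^2$-Betti numbers of $\cu(W,K)$. Lemma~\ref{l:RAB} says $V$ is a building of type $(W_L,S)$, so $\cb(V,K)$ is also defined, and it is the object that mediates between the two sides. The argument is
\[
\ltwo b^*(V)\;=\;\ltwo b^*(\cu(V,\ck))\;\overset{\ref{t:UV}}{=}\;\ltwo b^*(\cb(V,K))\;\overset{\ref{t:bldg}}{=}\;\ltwop b^*(\cu(W,K))\;=\;\ltwop b^*(W),
\]
where the second equality applies Theorem~\ref{t:UV} (which uses the assumption $\bq \in \car(V_s)$ crucially) and the third applies Theorem~\ref{t:bldg} with $M=K$.

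Before writing this down I would verify that the multiparameter assignment $p_s = V_s(\bq)-1$ that appears in the statement of Theorem~\ref{c:main} is exactly the parameter for which Theorem~\ref{t:bldg} applies. This is where Lemma~\ref{l:growth} enters: the identity $\sum_{v\in\pi^{-1}(w)}\bq_v=\bp_w$ is precisely the compatibility between the measure $\muq$ on $\cb(V,K)$ and the measure $\mup$ on $\cu(W,K)$ induced by the folding map $\pi$. This compatibility is already built into the hypothesis of Theorem~\ref{t:bldg} (and into its proof via the folding-map argument from \cite{dym}, \cite{ddjo07}), so no further bookkeeping is needed to make the two parameters match up.

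There is no real obstacle left at this stage; the substantive content sits inside the two inputs. The hardest preceding step was Theorem~\ref{t:UV}: showing that the $\bq$-weighted cohomology of $\cu(V,\ck)$ agrees with that of $\cb(V,K)$ required Dymara's vanishing Theorem~\ref{t:dym} to collapse the poset-of-spaces spectral sequence to its $j=0$ row and identify it with the cochain complex $C^*(K;\ci(\cn))$. The other input, Theorem~\ref{t:bldg}, generalizes \cite[Thm.~13.8]{ddjo07} to the locally infinite setting via the folding map. Once both are in hand, the proof of Theorem~\ref{c:main} is just the displayed chain of equalities, and I would present it in essentially that compact form.
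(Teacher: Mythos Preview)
Your proposal is correct and matches the paper's proof essentially verbatim: the paper proves Theorem~\ref{c:main} by exactly the displayed chain of equalities, invoking Theorem~\ref{t:UV} for the second step and Theorem~\ref{t:bldg} (with $M=K$) for the third. Your additional remarks about Lemma~\ref{l:growth} and Lemma~\ref{l:RAB} are accurate contextual observations but are not needed in the proof itself, since the parameter compatibility is already baked into the statement and proof of Theorem~\ref{t:bldg}.
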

\begin{proof}
	\begin{align*}
		\ltwo b^*(V) &:=\ltwo b^*(\cu(V,\ck))= \ltwo b^*(\cb(V,K)) &\text{(by Theorem~\ref{t:UV})}\\
		&\ = \ltwop b^*(\cu(W,K)) &\text{(by Theorem~\ref{t:bldg})}\\
		&:=\ltwop b^*(W).
		& & \qedhere
	\end{align*}
\end{proof}
\begin{remark}\label{r:gpbldg}
	(\emph{Graph products of spherical buildings}).
	Suppose $\{\cac_s\}_{s\in S}$ is a family of buildings where $\cac_s$ is type $(V_s,T_s)$.
	In \cite[Ex.~3.1~(3)]{d09} the first author defined the notion of a ``graph product of buildings,'' $\prod_{\gG} \cac_s$.
	It is a building of type $(V,T)$.
	Suppose each $\cac_s$ is spherical of thickness $\bq_s$ with a chamber-transitive automorphism group $G_s:=\Aut (\cac_s)$ (i.e., each $V_s$ is a finite Coxeter group and the number of chambers of $\cac_s$ in a panel of type $t_s$ is $q_{t_s}+1$).
	By Theorem~\ref{t:ddjobldg}, the ordinary $L^2$-Betti numbers of $\cac:=\prod_{\gG} \cac_s$ with respect to $G:=\prod_{\gG} G_s$ are given by
	\[
		L^2b^j(\cb(\cac, \ck))=\ltwo b^j(V)=\ltwop b^j(W_L),
	\]
	where $p_s=V_s(\bq)-1=|\cac_s|-1$.
	In other words, the $L^2$-Betti numbers of a graph product of spherical buildings depend only on the thickness of the buildings and the weighted $L^2$-Betti numbers of the associated RACS, $(W,S)$.
\end{remark}

\paragraph{The proof of Theorem~\ref{t:bldg}} The proof is a modification of the proof in \cite[Theorem 13.8 in Section 13]{ddjo07} and follows a series of lemmas.
\begin{lemma}\label{l:vnsubalgebra}
	\begin{enumeratei}
		\item\label{l:vnsubalgebrai}
		The map $\pi:V\to W$ induces an isometric embedding $\pi^*:\ltwop(W) \to \ltwo (V)$.
		\item\label{l:vnsubalgebraii}
		For each $s\in S$, $\pi^*(a_s)=a_{T_s}$.
		Moreover, for each spherical subset $J\subset S$, $\pi^*(a_J)=a_{\pi^\minus (J)}$.
		\item\label{l:vnsubalgebraiii}
		The map $\pi^*:\ltwop (W)\to \ltwo (V)$ induces a monomorphism of von Neumann algebras $\pi^*:\cN_\bp(W) \to \cn(V)$.
		(In particular, $\pi^*$ commutes with the $*$ anti-involutions on $\cN_\bp(W)$ and $\cn(V)$.)
	\end{enumeratei}
\end{lemma}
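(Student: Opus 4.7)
The plan is to build $\pi^*$ first at the level of Hilbert spaces and then identify the image of each Hecke generator $e_s$ with a specific self-adjoint element of $\cn(V)$, so that the resulting $*$-algebra map extends by weak closure to the von Neumann algebras.

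For (\ref{l:vnsubalgebrai}), I would set $\pi^*(e_w):=\sum_{v\in \pi^{-1}(w)}e_v$. Lemma~\ref{l:growth} gives $\|\pi^*(e_w)\|_{\bq}^2 = \sum_{v\in \pi^{-1}(w)} \bq_v = \bp_w = \|e_w\|_{\bp}^2$, so the sum converges in $\ltwo(V)$; since the fibers $\pi^{-1}(w)$ are disjoint, images of distinct basis vectors are orthogonal, and $\pi^*$ extends to an isometric embedding. For (\ref{l:vnsubalgebraii}), the normal form $v = v_{s_1}\cdots v_{s_n}$ over a reduced expression for $\pi(v)$ gives $\pi^{-1}(W_J) = V_J$, and Lemma~\ref{l:growth} yields $W_J(\bp) = V_J(\bq)$. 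Thus
\[
\pi^*(a_J) = \frac{1}{W_J(\bp)}\sum_{w\in W_J}\pi^*(e_w) = \frac{1}{V_J(\bq)}\sum_{v\in V_J} e_v = a_{\pi^{-1}(J)}.
\]

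Part (\ref{l:vnsubalgebraiii}) is the substantive step. The key observation is that, using $p_s = V_s(\bq)-1$ and $a_{T_s} = V_s(\bq)^{-1}\sum_{v\in V_s} e_v$, the vector $\pi^*(e_s) = \sum_{t\in V_s^*} e_t$ equals $(1+p_s)a_{T_s} - e_1$, which already lies in $\cn(V)$. I would then \emph{define} the algebra map by $\pi^*(e_s):=(1+p_s)a_{T_s} - e_1$ and verify that the Hecke relations of $\bR_\bp(W)$ are preserved. The quadratic relation emerges from $a_{T_s}^2 = a_{T_s}$:
\[
\pi^*(e_s)^2 = (1+p_s)^2 a_{T_s} - 2(1+p_s)a_{T_s} + e_1 = (p_s^2-1)a_{T_s} + e_1 = (p_s-1)\pi^*(e_s) + p_s\,e_1,
\]
matching $e_s^2 = (p_s-1)e_s + p_s\,e_1$. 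Since $W=W_L$ is right-angled, the remaining relations are the commutations $e_se_t = e_te_s$ for $\{s,t\}\in\edge(\gG)$, and these transport to $a_{T_s}a_{T_t} = a_{T_t}a_{T_s}$ because $V_s$ and $V_t$ commute in $V$. The map is $*$-preserving since $a_{T_s}$ and $e_1$ are self-adjoint; compatibility with part (\ref{l:vnsubalgebrai}) on vectors makes it isometric, so it extends by weak closure to a $*$-embedding $\cN_\bp(W)\hookrightarrow\cn(V)$. Injectivity follows because $e_1$ is separating for $\cN_\bp(W)$: if $\pi^*(\phi)=0$, then $\pi^*(\phi(e_1))=\pi^*(\phi)(e_1)=0$, hence $\phi(e_1)=0$ by isometry, hence $\phi=0$.

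The main obstacle is the cross-check in part (\ref{l:vnsubalgebraiii}): reconciling the two \emph{a priori} different descriptions of $\pi^*$ (as a Hilbert-space map sending $e_w\mapsto\sum_v e_v$ and as an algebra map sending $e_s\mapsto(1+p_s)a_{T_s}-e_1$), and showing that the single idempotent identity $a_{T_s}^2=a_{T_s}$ precisely encodes the full quadratic Hecke relation. Once this cross-check is in place, weak-density extends the map to the von Neumann algebras and injectivity is immediate.
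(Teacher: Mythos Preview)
Your proof is correct and follows essentially the same strategy as the paper's: verify that $\pi^*$ respects the generators and relations of the Hecke algebra, then pass to the weak closure. The only cosmetic difference is that the paper works with the idempotent generators $a_s$ (so the quadratic relation $a_s^2=a_s\mapsto a_{T_s}^2=a_{T_s}$ is immediate and only the commutations need comment), whereas you work with the $e_s$ and unpack the quadratic Hecke relation explicitly via $\pi^*(e_s)=(1+p_s)a_{T_s}-e_1$; your computation that $a_{T_s}^2=a_{T_s}$ encodes $e_s^2=(p_s-1)e_s+p_s e_1$ is exactly the change of variables between the two generating sets.
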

\begin{proof}
	To prove (\ref{l:vnsubalgebrai}), notice that as $w$ varies over $W$, the vectors $\pi^*(e_w)$ are orthogonal to each other, and equation (\ref{l:growth1}) implies that $\norm{\pi^*(e_w)}_\bq=\norm {e_w}_\bp$.

	Statement (\ref{l:vnsubalgebraii}) follows immediately from the definitions.

	The idempotents $a_s$ and $a_r$, with $r,s\in S$, commute if and only if $r$ and $s$ commute.
	So, if $a_s$ commutes with $a_r$, then $a_{T_s}$ commutes with $a_{V_r}$.
	Since the $a_s$ generate the Hecke algebra, statement~(\ref{l:vnsubalgebraiii}) follows from (\ref{l:vnsubalgebrai}) and (\ref{l:vnsubalgebraii}).
\end{proof}

Similarly to the equation~(\ref{l:growth1}), the measures $\muq$ and $\mup$ on the cells of $\cb(V,M)$ and $\cu(W,M)$ are related by
\begin{equation}\label{eq:mumu}
	\sum_{c'\in \pi^{-1}(c)} {\muq(c')}=\mup (c).
\end{equation}

By Lemma \ref{l:vnsubalgebra}, the map $\pi:\cb(V,M)\to \cu(W,M)$ induces a cochain map $\pi^*:\ltwop C^\ast(\cu(W,M)) \to \ltwo C^\ast(\cb(V,M))$.
We also have a ``transfer map'' $t:\ltwo C^\ast(\cb(V,M))\to \ltwop C^\ast(\cu(W,M))$ defined by
\[
	t(f)(c):=\sum_{c'\in \pi^{-1}(c)} f(c')\frac{\muq(c')}{\mup(c)}.
\]
\begin{lemma}\label{l:randt}
	\begin{enumeratei}
		\item $t\circ \pi^*=id:\ltwop C^i(\cu(W,M))\to\ltwop C^i(\cu(W,M))$.
		\item The maps $\pi^*$ and $t$ are adjoint to each other.
		\item These maps take harmonic cocycles to harmonic cocycles.
	\end{enumeratei}
\end{lemma}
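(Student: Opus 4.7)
Part (i) is a direct calculation: for $f \in \ltwop C^i(\cu(W,M))$ and an $i$-cell $c$ in $\cu(W,M)$,
\[
t(\pi^*f)(c) \;=\; \sum_{c'\in\pi^{-1}(c)} \pi^*f(c')\,\frac{\muq(c')}{\mup(c)} \;=\; f(c)\,\frac{1}{\mup(c)}\sum_{c'\in\pi^{-1}(c)} \muq(c') \;=\; f(c),
\]
where the last equality is equation~\eqref{eq:mumu}. Part (ii) is similarly a direct unwinding of the inner products: for $f\in\ltwop C^i(\cu(W,M))$ and $g\in\ltwo C^i(\cb(V,M))$,
\[
\langle \pi^*f, g\rangle_\bq \;=\; \sum_{c'} f(\pi c')\,g(c')\,\muq(c') \;=\; \sum_c f(c)\!\!\sum_{c'\in\pi^{-1}(c)}\!\! g(c')\,\muq(c') \;=\; \sum_c f(c)\,t(g)(c)\,\mup(c),
\]
where I have grouped the sum over $c'$ according to its image $c=\pi(c')$ and then multiplied and divided by $\mup(c)$ to recognize $t(g)(c)$; the final expression is $\langle f,t(g)\rangle_\bp$.

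For part (iii), a cochain is harmonic if it lies in the kernel of $d$ and is orthogonal to the image of $d$. I would first verify that both $\pi^*$ and $t$ are cochain maps, i.e.\ that they commute with $d$. For $\pi^*$ this is immediate from the fact that $\pi$ is a cellular map which sends each cell $c'$ of $\cb(V,M)$ homeomorphically onto $\pi(c')$ and preserves the face relation and incidence numbers; this gives $d\pi^* = \pi^* d$. For $t$ one expands $t(dg)(c)$ and $dt(g)(c)$ using the formulas for $t$ and $d$, and checks that the bijection between faces of preimages and preimages of faces (a structural feature of $M$-realizations of buildings of type $(W,S)$) together with the multiplicativity $\muq(c')/\muq(b') = \mup(c)/\mup(b)$ for compatible cells makes the two sums agree. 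Given both cochain-map properties, harmonicity is preserved: if $f$ is $\bp$-harmonic, then $d\pi^*f=\pi^*df=0$, and for any $\xi\in\ltwo C^{i-1}$,
\[
\langle \pi^*f,\,d\xi\rangle_\bq \;=\; \langle f,\,t\,d\xi\rangle_\bp \;=\; \langle f,\,d\,t\xi\rangle_\bp \;=\; 0,
\]
so $\pi^*f$ is orthogonal to $\overline{\operatorname{im}\,d}$; the argument for $t$ is symmetric, using $\pi^*$ as a cochain map.

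The only real obstacle is the verification that $t$ is a cochain map, since $\pi^*$ being cellular is essentially built into the definitions. The needed identity matches faces of cells in a fiber $\pi^{-1}(c)$ with preimages in $\pi^{-1}(b)$ of a chosen face $b<c$, and requires using the precise description of cells in $\cb(V,M)$ via cosets of the parabolic subgroups $V_{\pi^{-1}(S(c))}$ together with the relation~\eqref{eq:mumu} applied cell-by-cell. I expect this to reduce to a short local calculation entirely analogous to the one in the proof of \cite[Theorem~13.8]{ddjo07}; once that local identity is in hand, parts (i)--(iii) assemble immediately.
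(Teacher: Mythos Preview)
Your proof of (i) and (ii) is exactly the paper's. For (iii) your strategy is correct but organized slightly differently from the paper. You propose to verify directly that $t$ is a cochain map ($td=dt$) and then combine with adjointness; the paper instead verifies directly that $\pi^*$ commutes with the weighted boundary operator ($\pi^*\partial^{\bp}=\partial^{\bq}\pi^*$), so that $\pi^*$ preserves both cocycles and cycles, and then gets the same for $t$ in one line by taking adjoints. These two verifications are adjoint to one another and hence equivalent, but the paper's version is a shade cleaner: since $\pi^*$ is simply pullback along a cellular map, the identity $\partial^{\bq}(\pi^*f)(c')=\partial^{\bp}(f)(\pi(c'))$ comes out of a one-line computation using \eqref{eq:mumu}, whereas checking $td=dt$ directly forces you to track sums over fibers on the $t$ side. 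Either way the content is the same local identity you anticipate.
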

\begin{proof}
	Statement (i) is obvious.

	(ii) For $f\in \ltwop C^i(\cu(W,M))$ and $f'\in \ltwo C^i(\cb(V,M))$, we have
	\begin{align*}
		\langle \pi^*(f),f'\rangle_\bq &= \sum_{c'\in \cb^{(i)}}[\pi^*(f)(c')][f'(c')]\muq(c')\\
		&=\sum_{c'\in \cb^{(i)}} f(\pi(c'))f'(c')\muq(c')\\
		&=\sum_{c\in \cu^{(i)}}f(c)\sum_{c'\in \pi^{-1}(c)}f'(c')\muq(c')\\
		&=\sum_{c\in\cu^{(i)}}[f(c)][t(f')(c)]\mup(c) \\
		&=\langle f,t(f')\rangle_\bp,
	\end{align*}
	where $\cb^{(i)}$ and $\cu^{(i)}$ denote the set of $i$-cells in $\cb (V,M)$ and $\cu(W,M)$, respectively.

	(iii) Since $\pi^*:\ltwop C^\ast(\cu(W,M))\to \ltwo C^\ast(\cb(V,M))$ is induced by the cellular map $\pi:\cb(V,M)\to \cu(W,M)$, it takes cocycles to cocycles.
	We must show it also takes cycles to cycles.
	If $c'\in \cb^{(i-1)}$ and $d'\in \cb^{(i)}$ and if the incidence number $[c':d']$ is nonzero, then it is equal to $[\pi(c'):\pi(d')]$.
	Hence,
	\begin{equation*}
		\partial^\bq (\pi^*(f))(c')=\sum_{d'}[c':d']\frac{\muq (d')}{\muq (c')}f(\pi(c'))=\sum_{d}[c:d]\frac{\mup (d)}{\mup (c)}f(c)=\partial^\bp (f)(c),
	\end{equation*}
	where $c=\pi(c')$, $d=\pi(d')$, the first and the last equality come from the definition, and the middle equality comes from equation \eqref{eq:mumu}.
	So, $\partial^\bp (f)=0$ implies that $\partial^\bq (\pi^*(f))=0$.
	Since $t$ is the adjoint of $\pi^*$, it also must take cocycles to cocycles and cycles to cycles.
\end{proof}

Consider the diagram:
\[
	\minCDarrowwidth 0.5cm
	\begin{CD}
		\bigoplus \ltwop (W)@>\oplus a_{S(c)}>> \bigoplus A_{S(c)}= \ltwop C^\ast(\cu(W,M)) @>P>> \ltwop \ch^\ast(\cu(W,M))\\
		@V\pi^* VV @V\pi^* VV @V\pi^* VV\\
		\bigoplus \ltwo (V)@>\oplus a_{\pi^{-1}{(S(c))}}>> \bigoplus A_{\pi^{-1}(S(c))}=\ltwo C^\ast(\cb(V,M)) @>P>> \ltwo \ch^\ast(\cb(V,M))
	\end{CD}
\]
where $P$ denotes the orthogonal projection onto harmonic cocycles.
\begin{lemma}\label{l:rp}
	The above diagram commutes.
\end{lemma}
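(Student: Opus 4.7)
The plan is to verify commutativity of the two squares separately, using only the preceding lemmas.

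First I would dispense with the left square. The horizontal maps on either row are direct sums of right-multiplications by idempotents, one idempotent for each cell $c$ of the base chamber $M$. Since $\pi^*$ is a homomorphism of von Neumann algebras (Lemma~\ref{l:vnsubalgebra}(\ref{l:vnsubalgebraiii})) and satisfies $\pi^*(a_J)=a_{\pi^{-1}(J)}$ for each spherical $J\subset S$ (Lemma~\ref{l:vnsubalgebra}(\ref{l:vnsubalgebraii})), we have, for any $f \in \ltwop(W)$ in the $c$-summand,
\[
\pi^*(f\, a_{S(c)}) \;=\; \pi^*(f)\,\pi^*(a_{S(c)}) \;=\; \pi^*(f)\,a_{\pi^{-1}(S(c))},
\]
which is precisely the commutativity required.

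Next I would handle the right square, which is the main point. The claim is that $P\circ\pi^* = \pi^*\circ P$ as maps $\ltwop C^*(\cu(W,M))\to \ltwo\ch^*(\cb(V,M))$. Given $f\in \ltwop C^*(\cu(W,M))$, orthogonally decompose $f = P(f) + r$ with $r\perp \ltwop\ch^*(\cu(W,M))$. Applying $\pi^*$,
\[
\pi^*(f) \;=\; \pi^*(P(f)) + \pi^*(r).
\]
By Lemma~\ref{l:randt}(iii), $\pi^*(P(f))$ is a harmonic cocycle in $\ltwo C^*(\cb(V,M))$. It therefore suffices to show that $\pi^*(r)$ is orthogonal to every harmonic cocycle $h'\in \ltwo\ch^*(\cb(V,M))$. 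Using the adjunction $\langle \pi^*(\cdot),\cdot\rangle_\bq = \langle \cdot, t(\cdot)\rangle_\bp$ from Lemma~\ref{l:randt}(ii) and the fact that $t$ preserves harmonic cocycles (Lemma~\ref{l:randt}(iii)),
\[
\langle \pi^*(r), h'\rangle_\bq \;=\; \langle r, t(h')\rangle_\bp \;=\; 0,
\]
because $t(h')$ is harmonic and $r$ is orthogonal to harmonic cocycles. Hence $\pi^*(r)\perp \ltwo\ch^*(\cb(V,M))$, and the uniqueness of orthogonal decomposition gives $P(\pi^*(f)) = \pi^*(P(f))$.

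I do not expect any real obstacle here: everything is packaged neatly by Lemmas~\ref{l:vnsubalgebra} and \ref{l:randt}. The only conceptual point worth emphasizing in the write-up is that both $\pi^*$ and its adjoint $t$ send harmonic cocycles to harmonic cocycles, which is exactly what forces the orthogonal projection $P$ to intertwine with $\pi^*$.
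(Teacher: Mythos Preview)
Your proof is correct and follows essentially the same approach as the paper's. For the first square the paper simply cites Lemma~\ref{l:vnsubalgebra} without spelling out the multiplicativity; for the second square the paper computes $\langle P\pi^*(x)-\pi^*P(x),h\rangle_\bq$ directly and reduces it to $\langle x-P(x),t(h)\rangle_\bp=0$ via Lemma~\ref{l:randt}(ii),(iii), which is exactly your orthogonal-decomposition argument rephrased.
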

\begin{proof}
	The commutativity of the first square follows from Lemma~\ref{l:vnsubalgebra}.

	Let $x\in \ltwop C^\ast(\cu(W,M))$.
	To prove commutativity of the second square, it is enough to show that $P\pi^*(x)-\pi^* P(x)$ is orthogonal to any harmonic cocycle $h\in \ltwo \ch^\ast (\cb(V,M))$.
	We have: $\langle P\pi^*(x),h\rangle_\bq =\langle \pi^*(x),P(h)\rangle_\bq= \langle \pi^*(x),h\rangle_\bq$.
	Hence,
	\[
		\langle P\pi^*(x)-\pi^* P(x),h\rangle_\bq=\langle \pi^*(x-P(x)),h\rangle_\bq= \langle x-P(x),t(h)\rangle_\bp=0,
	\]
	where the second and third equalities follow, respectively, from parts (ii) and (iii) of Lemma~\ref{l:randt}.
\end{proof}
\begin{proof}[Proof of Theorem~\ref{t:bldg}] Let $e_c\in\bigoplus \ltwo (V)$ denote the unit vector $e_1 \in \ltwo (V)$ in the summand corresponding to a cell $c \in M^{(i)}$, and similarly for $\ltwop (W)$.
	Note that $\pi^*(e_c)=e_c$.
	Let $\theta$ denote the compositions of the maps in the top and the bottom rows of the above diagrams, i.e., $\theta$ is the orthogonal projection of the free Hecke--von Neumann module onto harmonic cocycles.
	Using Lemmas~\ref{l:rp} and \ref{l:randt}, we get
	\begin{align*}
		b^i_\bq(\cb(V,M))&:=\dim_{\cn(V)}\ltwo H^i(\cb(V,M))\\
		&=\sum \langle \theta(e_c),e_c\rangle_\bq =\sum \langle \theta \pi^*(e_c),\pi^* (e_c)\rangle_\bq \\
		& =\sum \langle \pi^* \theta(e_c),\pi^*(e_c)\rangle_\bq = \sum \langle \theta(e_c),t\pi^*(e_c)\rangle_\bp\\
		&= \sum \langle \theta(e_c),(e_c)\rangle_\bp =\dim_{\cN_p(W)}\ltwop H^i(\cu(W,M))\\
		&:=b^i_\bp(\cu(W,M)).
		\qedhere
	\end{align*}
\end{proof}

\section{Octahedralization}\label{s:oct}
Suppose $L$ is a simplicial complex.
Its \emph{octahedralization}, $\OL$, is defined by
\begin{equation}\label{e:defoct}
	\OL:=\aster_L S^0.
\end{equation}

Next we work out an example which motivated most of the the calculations in this paper.
For each $s\in S$, $V_s$ is the infinite dihedral group with generating set, $T_s:=\{s^+,s^-\}$.
Suppose $(W_L,S)$ is the RACS associated to the graph $\gG$ and $(V,T)$ is the graph product of the infinite dihedral groups (so that $(V,T)$ is also a RACS. By \eqref{e:lvt}, $L(V,T)=OL$.
So, in this special case we shall write $W_\OL$ for $V$ and $OS$ for $S$ and call the RACS, $(W_\OL,OS)$, the \emph{octahedralization} of $(W,S)$.

Theorem~\ref{t:gpfree} gives the following calculation of the cohomology of $W_\OL$ with group ring coefficients.
\begin{theorem}\label{c:freeoct}
	\begin{equation*}\label{e:wol}
		\grh^n(W_\OL;\zz W_\OL)=\bigoplus_{J\in \cs(W,S)} H^{n-|J|}(K_J,\partial K_J) \otimes \zz (W_\OL/W_{OJ}).
	\end{equation*}
\end{theorem}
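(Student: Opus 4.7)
The plan is to specialize Theorem~\ref{t:gpfree} to $W_\OL$. By the construction of octahedralization, $W_\OL=\prod_\gG V_s$ is the graph product of the infinite dihedral groups $V_s=\bD_\infty$ over $\gG$, so every factor is infinite and Theorem~\ref{t:gpfree} applies directly. Under the conventions of Notation~\ref{n:vj}, the direct product $V_J:=\prod_{s\in J}V_s$ coincides with the special subgroup $W_{OJ}$ of the Coxeter system $(W_\OL,OS)$ for each $J\in\cs(W,S)$, since $OJ=\pim$. Inserting $G=W_\OL$ and $G_J=W_{OJ}$ into Theorem~\ref{t:gpfree} yields
\[
\grh^n(W_\OL;\zz W_\OL)=\bigoplus_{\substack{J\in\cs(W,S)\\ i+j=n}}H^i\bigl(K_J,\partial K_J;\,H^j(W_{OJ};\zz W_\OL)\bigr).
\]

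The remaining task is the computation of the coefficient modules $H^j(W_{OJ};\zz W_\OL)$. I would argue as follows. Each $V_s=\bD_\infty$ acts properly and cocompactly on its Davis complex $\bR$, so $W_{OJ}$ acts properly and cocompactly on the contractible space $\bR^{|J|}$. Consequently $W_{OJ}$ is virtually Poincar\'e dual of dimension $|J|$, and
\[
H^j(W_{OJ};\zz W_{OJ})\cong H^j_c(\bR^{|J|}),
\]
which vanishes for $j\ne |J|$ and equals $\zz$ in degree $|J|$. Extending scalars from $\zz W_{OJ}$ to $\zz W_\OL$ preserves this vanishing and, in top degree, produces (a twist of) the right $W_\OL$-module $\zz(W_\OL/W_{OJ})$. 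Since the resulting coefficient system $\gs\mapsto H^{|J|}(W_{OJ};\zz W_\OL)$ is constant on $K_J$, the cohomology with local coefficients splits as a tensor product, and setting $j=|J|$, $i=n-|J|$ collapses the double sum to the displayed formula.

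The main obstacle is the module-theoretic identification of $H^{|J|}(W_{OJ};\zz W_{OJ})\otimes_{\zz W_{OJ}}\zz W_\OL$ with the stated (untwisted) right $W_\OL$-module $\zz(W_\OL/W_{OJ})$. A priori each reflection generator $s^\pm\in V_s$ reverses the orientation of $\bR$, so $W_{OJ}$ acts on $H^{|J|}_c(\bR^{|J|})\cong\zz$ by the sign character $\varepsilon\colon W_{OJ}\to\{\pm1\}$, $v\mapsto(-1)^{\ell(v)}$; the induced module $\zz_\varepsilon\otimes_{\zz W_{OJ}}\zz W_\OL$ is therefore a priori a twist of $\zz(W_\OL/W_{OJ})$. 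Identifying this twist with the stated untwisted module requires a change of coset-representative basis that absorbs $\varepsilon$; once this bookkeeping is carried out (for example, by using a system of representatives with fixed parity on each coset), the argument is a routine consequence of Theorem~\ref{t:gpfree}.
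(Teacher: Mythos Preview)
Your proposal is correct and follows the paper's own proof essentially verbatim: apply Theorem~\ref{t:gpfree} to the graph product $W_\OL=\prod_\gG \bD_\infty$ of infinite groups, then compute $H^*(W_{OJ};\zz W_{OJ})$ using the proper cocompact action of $(\bD_\infty)^{|J|}$ on $\bR^{|J|}$ to see it is $\zz$ concentrated in degree $|J|$, and substitute. The paper's proof is in fact terser than yours---it simply records the virtual Poincar\'e duality computation and says ``substituting this into the formula in Theorem~\ref{t:gpfree} gives the result,'' without mentioning the orientation twist at all.

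Your caution about the sign character $\varepsilon$ is well-placed but is not an obstacle for the statement as written. The formula is an identification of abelian groups (the tensor product is over $\zz$, as in the parallel Theorems~\ref{t:jm} and~\ref{t:bb}), and as an abelian group $\zz_\varepsilon\otimes_{\zz W_{OJ}}\zz W_\OL$ is free on the coset space regardless of the twist, since $\zz W_\OL$ is free as a left $\zz W_{OJ}$-module on any set of coset representatives. So no ``absorbing'' of $\varepsilon$ is needed to reach the displayed formula; the twist would only matter if one insisted on a right $W_\OL$-module isomorphism with the untwisted permutation module, which the theorem does not assert.
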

\begin{proof}
	Since $\bD_{\infty}$ acts properly and cocompactly on $\bR^1$, it is a $1$-dimensional virtual Poincar\'e duality group.
	It follows that the cohomology of $W_{OJ}=(\bD_\infty)^{|J|}$ with group ring coefficients is given by
	\[
		H^j(W_{OJ};\zz W_{OJ})=
		\begin{cases}
			\zz, &\text{if $j=|J|$,}\\
			0, &\text{otherwise}.
		\end{cases}
	\]
	Substituting this into the formula in Theorem~\ref{t:gpfree} gives the result.
\end{proof}

It is proved in \cite{dj00} that the cubical complex $\wt{T}_L$ can be identified with $\cu(W_{OL}, \ck)$, and that $W_{OL}$ and $A_L$ are commensurable.
Hence, Theorem~\ref{c:freeoct} gives a calculation of $H^*(A_L;\zz A_L)$.
(In fact this was the method used by Jensen and Meier in their proof of Theorem~\ref{t:jm}.)
\begin{Remark}
	Since $\bZ\subset \bd$, there is an obvious inclusion of graph products, $A_L\subset W_{OL}$.
	However, whenever $L$ is not a simplex, the image of $A_L$ is of infinite index in $W_{OL}$.
	In \cite{dj00} it is proved that $A_L$ and $W_{OL}$ are both isomorphic to subgroups of index $2^{|S|}$ in a larger RACG.
\end{Remark}

\paragraph{Weighted $L^2$-cohomology of $\boldsymbol{W_\OL}$.} We have $OS=\{s^+,s^-\}_{s\in S}$.
Let $\bq=(q_{s^\pm})_{s\in S}$.
The growth series of the infinite dihedral group is easy to compute.
(For example, see \cite[Ex.~17.1.2]{dbook}.) We have
\begin{equation*}
	V_s(\bq)=\frac{(1+q_{s^+})(1+q_{s^-})}{1-q_{s^+}q_{s^-}}\quad\text{and}\quad \frac{1}{V_s(\bq^\minus)} = \frac{q_{s^+}q_{s^-}-1}{(1+q_{s^-})(1+q_{s^+})},
\end{equation*}
and
\[
	p_s=V_s(\bq)-1 =\frac{q_{s^+}+q_{s^-}+2q_{s^+}q_{s^-}}{1-q_{s^+}q_{s^-}}.
\]

Write $\bq<\bone$ (resp.\ $\bq>\bone$) to mean that each $q_{s^\ga} <1$ (resp.\ $>1$), for $\ga\in \{+,-\}$.
The following is a corollary of the results in \S\ref{s:lgp}.
\begin{theorem}\label{c:oct}
	Suppose $(W_\OL,OS)$ is the octahedralization of $(W,S)$.
	\begin{enumeratei}
		\item\label{c:octgrowth}
		\[
			W_\OL(\bq)=W(\bp).
		\]
		\item\label{c:octsmall}
		If $\bq<\bone$, then
		\[
			\ltwo b^n(W_\OL)=\ltwop b^n(W_L).
		\]
		If, in addition, $\bp^\minus\in \ol{\car(W)}$ (i.e., $\bq$ is sufficiently close to $\bone$), then
		\[
			\ltwo b^j (W_\OL)=\sum_{J\in \cs(W,S)} b^{k}(K,K^{S-J})\dim_{\cnp} D^J,
		\]
		where a formula for $\dim_{\cnp} D^J$ is defined in Theorem~\ref{t:ddjodecomp}.
		\item\label{c:octthird}
		If $\bq={\mathbf 1/3}$, then
		\[
			L^2_{1/3}b^n(W_\OL)=L^2b^n(W_L),
		\]
		\item\label{c:octlarge}
		If $\bq>\bone$, then
		\[
			\ltwo b^n (W_\OL)=\sum_{J\in \cs(W,S)} b^{n-|J|}(K_J,\partial K_J) \prod_{s\in J} \frac{q_{s^+}q_{s^-}-1}{(1+q_{s^-})(1+q_{s^+})}.
		\]
		\item\label{c:octone}
		If $\bq=\bone$, then
		\[
			L^2b^n(W_\OL)=b^n(K,\partial K)=\ol{b}^{n-1}(L),
		\]
		where $\ol{b}^*(\ )$ refers to the reduced Betti-number.
	\end{enumeratei}
\end{theorem}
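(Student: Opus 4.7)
The plan is to reduce each part to a result from Sections~\ref{s:weighted} and \ref{s:lgp}. Part~(i) is immediate from Lemma~\ref{l:growth}, since $p_s = V_s(\bq)-1$ is exactly what makes the sum-over-fiber identity there read $W_{OL}(\bq) = W_L(\bp)$. For parts~(ii) and (iii), the condition $\bq < \bone$ means $q_{s^+}q_{s^-} < 1$, which for $V_s = \bd$ is precisely $\bq \in \car(V_s)$; Theorem~\ref{c:main} then applies and yields $\ltwo b^*(W_{OL}) = \ltwop b^*(W_L)$, and the refined formula under $\bp^{-1} \in \ol{\car(W_L)}$ is Theorem~\ref{t:ddjo} with $M = K$. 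For (iii) I compute $V_s(1/3) = (4/3)^2/(8/9) = 2$, so the induced $\bp$ equals $\bone$ and $\ltwop = L^2$; hence (iii) is a special case of (ii).

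For part~(iv), the hypothesis $\bq > \bone$ gives $q_{s^+}q_{s^-} > 1$, so $\bq \notin \car(V_s)$, and Theorem~\ref{t:largeq} applies:
\[
\ltwo b^n(W_{OL}) \;=\; \sum_{\substack{i+j=n \\ J\in\cs(W,S)}} b^i(K_J, \partial K_J)\cdot \ltwo b^j(V_J),
\]
with $\ltwo b^*(V_J)$ reduced to the $\ltwo b^*(V_s)$ by the product formula of the same theorem. It therefore suffices to show $\ltwo b^j(V_s) = \delta_{j,1}/V_s(\bq^{-1})$ for $V_s = \bd$. Since $\bq^{-1} < \bone$ lies in $\car(V_s)$, Theorem~\ref{t:ddjo} applied to $V_s$ with $M = K_s = [0,1]$ expresses this as a sum over spherical $J \subseteq T_s$ of $b^j(K_s, K_s^{T_s \setminus J})\dim_{\cn} D^J$; the only nonzero topological factor is at $J = \emptyset$ in degree $1$, and Theorem~\ref{t:ddjodecomp} for $J = \emptyset$ gives
\[
\dim_{\cn} D^\emptyset \;=\; 1 - \frac{1}{1+q_{s^+}} - \frac{1}{1+q_{s^-}} \;=\; \frac{q_{s^+}q_{s^-} - 1}{(1+q_{s^+})(1+q_{s^-})} \;=\; \frac{1}{V_s(\bq^{-1})}
\]
after an elementary simplification. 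Substituting back yields (iv).

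Part~(v) is the boundary case $\bq = \bone$, where $q_{s^+}q_{s^-} = 1$ is still outside the open region $\car(V_s)$, so Theorem~\ref{t:largeq} applies once more. For $J \neq \emptyset$, $V_J$ is a nontrivial direct product of copies of $\bd$, hence an infinite amenable group, and all its ordinary $L^2$-Betti numbers vanish; only the $J = \emptyset$ term survives, giving $b^n(K, \partial K) = \ol{b}^{n-1}(L)$ (since $K$ is the cone on $\partial K$, the barycentric subdivision of $L$). The main substantive step is the dihedral calculation in part~(iv); everything else assembles from Theorem~\ref{t:largeq}, Theorem~\ref{c:main}, and the vanishing of $L^2$-Betti numbers for infinite amenable groups.
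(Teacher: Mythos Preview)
Your proof is correct and follows essentially the same route as the paper's: parts (i)--(iii) via Lemma~\ref{l:growth} and Theorems~\ref{c:main} and~\ref{t:ddjo}; part (iv) via Theorem~\ref{t:largeq} together with the dihedral calculation; part (v) via Theorem~\ref{t:largeq} with the $J\neq\emptyset$ terms vanishing. The only cosmetic differences are that for (iv) you spell out the computation of $\ltwo b^*(\bD_\infty)$ through Theorem~\ref{t:ddjo} (the paper simply asserts $\ltwo b^{|J|}(V_J)=1/V_J(\bq^{-1})$), and for (v) you argue via amenability of $V_J$ whereas the paper just plugs $\bq=\bone$ into the product appearing in (iv) and notes it vanishes.
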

\begin{proof}
	(\ref{c:octgrowth}),(\ref{c:octsmall}) and (\ref{c:octthird}) are immediate from the formula for $p_s$, Lemma~\ref{l:growth}, and Theorems~\ref{c:main} and \ref{t:ddjo}.

	The region of convergence of the dihedral group $\car (\bD_\infty)$ is given by $q_{s^+}q_{s^-}<1$.
	For a spherical $J$, $V_J$ is the $J$-fold product of $\bD_\infty$.
	Thus, if $\bq>\bone$, then $\ltwo H^*(V_J)$ is concentrated in degree $|J|$ and
	\[
		\ltwo b^{|J|} (V_J)=\frac{1}{V_J(\bq^\minus)} = \prod_{s\in J}\frac{q_{s^+}q_{s^-}-1}{(1+q_{s^-})(1+q_{s^+})}.
	\]
	and we apply Theorem~\ref{t:largeq} to obtain (\ref{c:octlarge}).
	Finally, if $\bq=\bone$, then all the terms with nonempty $J$ in (\ref{c:octlarge}) (or (\ref{c:octsmall})) vanish and we obtain (\ref{c:octone}).
	(Since $L^2b^n(W_\OL)=L^2b^n(A_L)$, it also follows from Theorem~\ref{t:dl}.)
\end{proof}
\begin{remark}
	If $\bq$ is such that $q_{s^+}=q_{s^-}$ ($=q_s$), then in Corollary~\ref{c:oct}(i) the formula becomes
	\[
		\ltwo b^n (W_\OL)=\sum_{J\in \cs} b^{n-|J|}(K_J,\partial K_J)\prod_{s\in J} \, \frac{q_s-1}{1+q_s},
	\]
	and in (ii) the formula for $p_s$ simplifies to
	\[
		p_s=\frac{2q_s}{1-q_s}.
	\]
\end{remark}
\begin{remark}
	Our original motivation for computing the weighted $L^2$-co\-ho\-mo\-lo\-gy of the octahedralization of $W_L$ was to compute the ordinary $L^2$-cohomology of the Bestvina--Brady group $BB_L$ (which we did by the spectral sequence method in Theorem~\ref{t:lraag}).
	Although we were never able to make complete sense of the calculation, it was supposed to go something like this.
	Suppose that the multiparameter $\bq$ is a positive constant $q$.
	Then it should be possible to define the weighted $L^2$-cohomology of $A_L$ and $BB_L$.
	Moreover, the $\Ltwo$-Betti numbers of $A_L$ should equal those of $W_\OL$ and $\Ltwo$-Betti numbers of $BB_L$ should behave as if the Davis complex for $W_\OL$ were to split as a product of the complex $Z_L$ with the real line (with $\bD_\infty$-action) and as if the K\"unneth formula were true, i.e.,
	\begin{align*}
		\Ltwo b^n (W_\OL)&=\Ltwo b^n(BB_L)\cdot\Ltwo b^0(\bD_\infty)= \Ltwo b^n(BB_L)\left[\frac{1-q}{q+1}\right], \ \ \text{for $q<1$}\\
		\Ltwo b^{n+1} (W_\OL)&=\Ltwo b^n(BB_L)\cdot\Ltwo b^1(\bD_\infty)= \Ltwo b^n(BB_L)\left[\frac{q-1}{q+1}\right], \ \ \text{for $q>1$}.
	\end{align*}
	These can be rewritten as
	\begin{align}
		\Ltwo b^n(BB_L)&=\Ltwo b^n (W_\OL)\left[\frac{q+1}{1-q}\right], \ \ \text{for $q<1$, and}\label{e:1}
		\\
		&=\Ltwo b^{n+1} (W_\OL)\left[\frac{q+1}{q-1}\right], \ \ \text{for $q>1$}.\label{e:2}
	\end{align}
	Next we want to find the ordinary $L^2$-Betti numbers of $BB_L$ by taking the limit of either of these formulas as $q\to 1$.
	Since
	\[
		W_I(p)=(1+p)^{|I|}=\left(\frac{2q}{1-q} +1\right)^{|I|}= \left(\frac{1+q}{1-q}\right)^{|I|},
	\]
	the formula in Theorem~\ref{t:ddjodecomp} becomes
	\begin{equation}\label{e:61}
		\dim_{\cnp}D^J=(-1)^{|J|}\sum_{I\in \cs(W,S)_{\ge J}} \left(\frac{q-1}{1+q}\right)^{|I|}.
	\end{equation}
	After multiplying through by $(1-q)/(1+q)$, the only terms on the right hand side of \eqref{e:61} which will be nonzero when $q=1$ are those with $|I|=1$ and hence, $|J|=0$ or $1$.
	Since $\partial K= K^S$ is acyclic, the term for $J=\emptyset$ in Theorem~\ref{c:oct}, namely, $b^n(K,K^{S})$, vanishes.
	So, using Theorem~\ref{c:oct}~(i), formula \eqref{e:1} gives, for $q<1$,
	\[
		L^2b^n(BB_L)=\lim_{q\to 1} \Ltwo b^n(BB_L)=\sum_{s\in S}b^n(K,K^{S-s})= \sum_{s\in S}b^n(K_s,\partial K_s)
	\]
	where the last equation follows from the fact that $\partial K$ is acyclic and the excision, $H^*(\partial K,K^{S-s})\cong H^*(K_s,\partial K_s)$.
	Similarly, for $q>1$, Theorem~\ref{c:oct}~(ii) can be written as
	\[
		\Ltwo b^{n+1} (W_\OL)=\sum_{J\in \cs(W,S)} b^{n-|J|+1}(K_J,\partial K_J) \left(\frac{q-1}{1+q}\right)^{|J|}
	\]
	and \eqref{e:2} gives, for $q>1$,
	\[
		L^2b^n(BB_L)=\lim_{q\to 1} \Ltwo b^n(BB_L)=\sum_{s\in S}b^n(K_s,\partial K_s),
	\]
	since when $q=1$ only the terms with $|J|=1$ are nonzero.
	So, in Theorem~\ref{c:oct} both the formulas (i) and (ii) give the same answer as Theorem~\ref{t:lraag}.
\end{remark}

\section{Duality groups}\label{ss:duality}

An $m$-dimensional simplicial complex $L$ is \emph{Cohen--Macaulay} if for each $J\in \cs(L)$, $\ol{H}^*(\Lk(J))$ is concentrated in degree $m-|J|$ and is torsion-free.
For $J=\emptyset$, this means that $\ol{H}^*(L)$ is concentrated in degree $m$.
It also implies that any maximal simplex $J$ has dimension $m$, since, when $J$ is maximal, $\Lk(J)=\emptyset$ and our convention is that $\rH^*(\emptyset)$ is concentrated in degree $-1$ (where it is $=\zz$).

A group $G$ of type FP is an $n$-dimensional \emph{duality group} if $H^*(G;\zz G)$ is concentrated in degree $n$ and is torsion-free.

An immediate consequence of Corollary~\ref{c:freeoct} is the following.
\begin{proposition}[Brady--Meier \cite{bradym} and Jensen--Meier \cite{jm}]\label{p:duality}
	Suppose $(W,S)$ is a RACS with nerve $L$.
	Then the octahedralization, $W_\OL$, is a virtual duality group if and only if $L$ is Cohen--Macaulay (and consequently, the same is true for the associated RAAG, $A_L$).
\end{proposition}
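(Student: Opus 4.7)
The plan is to read off the proposition directly from the computation in Theorem~\ref{c:freeoct}, using the standard identification $H^{n-|J|}(K_J,\partial K_J)\cong \rh^{n-|J|-1}(\Lk(J,L))$, which holds because $K_J$ is the cone on $\partial K_J$ and $\partial K_J$ is the barycentric subdivision of $\Lk(J,L)$. As a preliminary step I would note that $W_\OL$ is always virtually torsion-free and of type FP: it acts properly and cocompactly on the contractible Davis complex $\cu(W_\OL,\ck)$, and by Selberg's lemma contains a finite-index torsion-free subgroup, which therefore acts freely and cocompactly. Hence ``$W_\OL$ is a virtual duality group'' is equivalent to ``$H^*(W_\OL;\zz W_\OL)$ is concentrated in a single degree and torsion-free.''

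For the direction $(\Leftarrow)$, assuming $L$ is Cohen--Macaulay of dimension $m$, the group $\rh^*(\Lk(J,L))$ is concentrated in degree $m-|J|$ and torsion-free for every $J\in\cs(W,S)$ (with the convention $\rh^{-1}(\emptyset)=\zz$ handling the maximal simplices). So $H^{n-|J|}(K_J,\partial K_J)$ is nonzero only for $n=m+1$, and Theorem~\ref{c:freeoct} yields $\grh^n(W_\OL;\zz W_\OL)=0$ for $n\neq m+1$, with $\grh^{m+1}(W_\OL;\zz W_\OL)$ a direct sum of terms of the form $(\text{torsion-free})\otimes \zz(W_\OL/W_{OJ})$, hence free abelian. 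The filtration of $H^*(W_\OL;\zz W_\OL)$ provided by Lemma~\ref{l:main} is bounded (the poset $\cs(W,S)$ is finite) and all graded pieces are free abelian, so extensions split and $H^n(W_\OL;\zz W_\OL)=\grh^n(W_\OL;\zz W_\OL)$. This is the duality condition.

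For $(\Rightarrow)$, suppose $H^*(W_\OL;\zz W_\OL)$ is concentrated in some degree $n_0$ and torsion-free; the same is then true for $\grh^*$. By Theorem~\ref{c:freeoct}, each summand $\rh^{n-|J|-1}(\Lk(J,L))\otimes \zz(W_\OL/W_{OJ})$ vanishes for $n\neq n_0$, and since $\zz(W_\OL/W_{OJ})$ is free and nonzero we conclude $\rh^k(\Lk(J,L))=0$ for $k\neq n_0-|J|-1$. Applying this to a maximal simplex $J$ of $L$, for which $\Lk(J,L)=\emptyset$ forces $\rh^{-1}=\zz$, pins down $n_0=|J|$, so $L$ is pure of dimension $m:=n_0-1$. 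The concentration statement for general $J$ then reads $\rh^*(\Lk(J,L))$ concentrated in degree $m-|J|$, and torsion-freeness of each summand follows from torsion-freeness of $\grh^{n_0}$. This is the Cohen--Macaulay condition. The parenthetical statement for $A_L$ is then automatic, since $A_L$ and $W_\OL$ are commensurable (see the discussion following Theorem~\ref{c:freeoct}), and commensurable groups are simultaneously virtual duality groups.

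The one delicate point I anticipate is the passage from $\grh^n(W_\OL;\zz W_\OL)$ to $H^n(W_\OL;\zz W_\OL)$, which in general is the nontrivial extension issue highlighted by the Question in the introduction. In the present situation, however, it collapses for free: the relevant graded pieces are free abelian groups and the filtration is bounded, so every extension splits on the nose.
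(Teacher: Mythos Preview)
Your approach is exactly the paper's: the paper declares the result an ``immediate consequence'' of Theorem~\ref{c:freeoct} and gives no further argument, and you have faithfully unpacked what that immediacy amounts to, including the identification $H^{n-|J|}(K_J,\partial K_J)\cong\rh^{n-|J|-1}(\Lk(J,L))$ and the commensurability with $A_L$. Your treatment of the passage $\grh\leftrightarrow H$ is more careful than anything in the paper.

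One small gap in the $(\Rightarrow)$ direction: the sentence ``$H^{n_0}$ torsion-free implies $\grh^{n_0}$ torsion-free'' does not follow formally. The associated graded of a torsion-free abelian group with respect to a bounded filtration can certainly acquire torsion (think of $2\zz\subset\zz$), so you cannot read off torsion-freeness of the link cohomologies this way. A clean patch: run the identical spectral sequence with $\FF_p W_\OL$ coefficients for each prime $p$; Lemma~\ref{l:main} degenerates for the same reason. The Bockstein long exact sequence together with your hypothesis (concentration in degree $n_0$ and no $p$-torsion in $H^{n_0}$) forces $H^*(W_\OL;\FF_p W_\OL)$ to be concentrated in degree $n_0$ as well, whence each $H^*(K_J,\partial K_J;\FF_p)$ is concentrated in degree $n_0-|J|$. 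Since the pairs $(K_J,\partial K_J)$ are finite complexes and this holds for every $p$, the universal coefficient theorem now gives that $H^{n_0-|J|}(K_J,\partial K_J;\zz)$ is torsion-free. With this in place your argument goes through; the paper, for its part, simply does not address the point.
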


Brady and Meier asked if these conditions are equivalent for a general Artin group (Question 2 of \cite{bradym}) and attributed the question to the first author of this paper.
The equivalence follows immediately from Theorem~\ref{t:salvetti} whenever the $K(\pi,1)$ Conjecture holds for $A$.
We state this as the following.
\begin{proposition}\label{p:salvetti}
	Suppose $X$ is the Salvetti complex associated to a Coxeter system with nerve $L$.
	Then $H^*(X;\zz A)$ is concentrated in degree $n$ and is torsion-free if and only if $L$ is an $(n-1)$-dimensional Cohen--Macaulay complex.
\end{proposition}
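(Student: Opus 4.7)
The plan is to deduce both directions from Theorem~\ref{t:salvetti}, which gives
\[
\grh^n(X;\zz A) = \bigoplus_{J\in \cs(W,S)} H^{n-|J|}(K_J,\partial K_J) \otimes H^{|J|}(A_J;\zz A),
\]
together with two identifications. Since $K_J$ is the cone on $\partial K_J$, which is the barycentric subdivision of $\Lk(J,L)$, we have $H^{n-|J|}(K_J,\partial K_J) \cong \ol H^{n-|J|-1}(\Lk(J))$; and by Squier's theorem each $H^{|J|}(A_J;\zz A) = F_J\otimes_{A_J}\zz A$ is nonzero and free abelian (as $F_J$ is free abelian and $\zz A$ is free as a left $\zz A_J$-module).

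For the ``if'' direction, assume $L$ is $(n-1)$-dimensional Cohen--Macaulay. Then $\ol H^*(\Lk(J))$ is concentrated in degree $n-|J|-1$ and torsion-free for every $J\in\cs(W,S)$. Each summand in the formula above therefore lives in total degree $n$ and is torsion-free, so $\grh^k(X;\zz A)=0$ for $k\neq n$ and $\grh^n(X;\zz A)$ is a direct sum of torsion-free abelian groups. The former immediately gives $H^k(X;\zz A)=0$ for $k\neq n$, and a finite induction on the spectral-sequence filtration of $H^n(X;\zz A)$---using the elementary fact that an extension of torsion-free abelian groups by torsion-free abelian groups is torsion-free---promotes this to torsion-freeness of $H^n(X;\zz A)$.

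For the ``only if'' direction, assume $H^*(X;\zz A)$ is concentrated in degree $n$ and torsion-free. Then $\grh^k = 0$ for $k\neq n$, and since $H^{|J|}(A_J;\zz A)\neq 0$ for every $J\in\cs(W,S)$, the formula forces $H^i(K_J,\partial K_J)=0$ for $i\neq n-|J|$; equivalently, $\ol H^*(\Lk(J))$ is concentrated in degree $n-|J|-1$. Applied to a maximal simplex $J$ of $L$, where $\Lk(J)=\emptyset$ and $\ol H^{-1}=\zz$, this forces $|J|=n$, so $L$ is pure of dimension $n-1$.

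The main obstacle in this direction is torsion-freeness of the links, because torsion-freeness of $H^n(X;\zz A)$ does not a priori force torsion-freeness of the subquotients $\grh^n$. To circumvent this, I would rerun the argument with $\FF_p A$-coefficients for each prime $p$: the spectral sequence of Lemma~\ref{l:main} applies verbatim, and the required analogue of Lemma~\ref{l:artinZ}---that $H^i(A_J;\FF_p A)=0$ for $i\neq |J|$---follows from Squier's duality via the standard identification of the diagonal $A_J$-module $F_J\otimes \FF_p A$ with a free $A_J$-module. The Bockstein long exact sequence associated to $0\to\zz A\xrightarrow{p}\zz A\to\FF_p A\to 0$, together with the hypothesis on $H^*(X;\zz A)$, shows that $H^*(X;\FF_p A)$ is concentrated in degree $n$ as well. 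Running the preceding argument over $\FF_p$ then yields $H^i(\Lk(J);\FF_p)=0$ for $i\neq n-|J|-1$, and applying the universal coefficient theorem to the finitely generated integral homology of $\Lk(J)$---while letting $p$ vary over all primes---eliminates every prime-order torsion element of $H_{n-|J|-1}(\Lk(J);\zz)$, yielding the desired torsion-freeness of $\ol H^{n-|J|-1}(\Lk(J);\zz)$.
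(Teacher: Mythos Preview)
Your proof is correct and follows the paper's approach: both directions are read off from Theorem~\ref{t:salvetti}. The paper's own ``proof'' is a single sentence asserting that the equivalence follows immediately from that theorem, so you have in fact supplied considerably more detail than the paper does. In particular, your $\FF_p$-coefficient argument for torsion-freeness of the links in the ``only if'' direction addresses a genuine subtlety the paper glosses over: vanishing of $\grh^k$ for $k\neq n$ only shows that $\ol H^*(\Lk(J);\zz)$ is concentrated in the right degree, and concentration alone does not force torsion-freeness (e.g.\ $\Lk(J)=\bR P^2$ has reduced integral cohomology concentrated in degree~$2$ but equal to $\zz/2$ there), so rerunning the spectral sequence with $\FF_p A$-coefficients is exactly the right move. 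One small point: your justification of the $\FF_p$-analogue of Lemma~\ref{l:artinZ} via ``the diagonal $A_J$-module $F_J\otimes\FF_p A$ being free'' is a bit opaque; it is cleaner simply to feed the short exact sequence $0\to\zz A\xrightarrow{p}\zz A\to\FF_p A\to 0$ into $H^*(A_J;-)$ and use that $H^{<|J|}(A_J;\zz A)=0$ and $H^{|J|}(A_J;\zz A)$ is torsion-free.
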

\noindent(The "if" direction was also proved by Brown--Meier in \cite{brownm} by using a different spectral sequence.)

Similarly, by Theorem~\ref{t:bb}, for Bestvina--Brady groups we have the following.
\begin{proposition}\label{bb:duality}
	Suppose $L$ is an acyclic flag complex.
	Then $BB_L$ is a duality group if and only if $L$ is Cohen--Macaulay.
	(For example, $L$ could be a acyclic, compact manifold with boundary.)
\end{proposition}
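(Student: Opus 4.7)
The plan is to read the proposition off Theorem~\ref{t:bb}. First I would rewrite the right hand side of that theorem in terms of links in $L$. Since $K_J$ is the cone on $\partial K_J$, and $\partial K_J$ is the barycentric subdivision of $\Lk(J)$, the long exact sequence of the pair gives
\[
H^{n-|J|+1}(K_J,\partial K_J)\;\cong\;\rh^{n-|J|}(\Lk(J)),
\]
so Theorem~\ref{t:bb} reads
\[
\grh^n(BB_L;\zz BB_L)\;=\;\bigoplus_{\emptyset\neq J\in\cs(L)}\rh^{n-|J|}(\Lk(J))\otimes\zz(BB_L/(BB_L\cap A_J)).
\]
Each coefficient $\zz(BB_L/(BB_L\cap A_J))$ is a nonzero free abelian group, so the $J$-summand vanishes (respectively, is torsion free) precisely when $\rh^{n-|J|}(\Lk(J))$ does (respectively, is). Acyclicity of $L$ means $\rh^*(L)=\rh^*(\Lk(\emptyset))=0$, so the $J=\emptyset$ case of the Cohen--Macaulay condition is automatic.

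For the implication $L$ Cohen--Macaulay of dimension $m$ $\Rightarrow$ $BB_L$ is a duality group, the Cohen--Macaulay hypothesis makes each $\rh^*(\Lk(J))$ torsion free and concentrated in degree $m-|J|$, so every summand above is torsion free and sits in total degree $n=m$. Hence $\grh^n(BB_L;\zz BB_L)$ vanishes for $n\neq m$ and is torsion free for $n=m$. Because the filtration is finite, vanishing of $\grh^n$ forces vanishing of $H^n(BB_L;\zz BB_L)$; and an iterated extension of torsion free abelian groups by torsion free abelian groups is torsion free, so $H^m(BB_L;\zz BB_L)$ is torsion free as well. Together with $BB_L$ being of type FP (by \cite{bb}, since $L$ is acyclic), this says $BB_L$ is an $m$-dimensional duality group.

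For the converse, assume $BB_L$ is a duality group of dimension $d$. Vanishing of $H^n(BB_L;\zz BB_L)$ for $n\neq d$ forces vanishing of $\grh^n$ for $n\neq d$, and reading off the formula yields that for every nonempty $J\in\cs(L)$ the group $\rh^*(\Lk(J))$ is concentrated in degree $d-|J|$. To identify $d$ with $\dim L$, I would take any maximal simplex $J_0$ of $L$, so that $\Lk(J_0)=\emptyset$; since $\rh^{-1}(\emptyset)=\zz$, concentration of $\rh^*(\emptyset)$ in degree $d-|J_0|$ forces $d=|J_0|-1=\dim L$.

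The main obstacle is torsion freeness of each individual $\rh^{d-|J|}(\Lk(J))$ in the converse, since a subquotient of a torsion free abelian group need not be torsion free. I would circumvent this by invoking the alternative derivation of Theorem~\ref{t:bb} explained in the Remark following that theorem: when $L$ is acyclic, the Mayer--Vietoris sequence for $\wt T_L=Y_+\cup Y_-$ together with vanishing of $H^*_c(Y_\pm)$ gives $H^n(BB_L;\zz BB_L)\cong H^{n+1}_c(\wt T_L)$, and Jensen--Meier's Theorem~\ref{t:jm} expresses the right hand side as an honest direct sum of abelian groups (not merely its associated graded). Consequently each $J$-summand is a direct summand of the torsion free group $H^d(BB_L;\zz BB_L)$, and hence is itself torsion free, completing the Cohen--Macaulay condition for $L$.
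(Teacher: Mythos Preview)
Your proof is correct and follows the paper's intended approach: the paper gives no explicit argument, merely pointing to Theorem~\ref{t:bb}, and you correctly read the result off that formula after rewriting $H^{n-|J|+1}(K_J,\partial K_J)$ as $\rh^{n-|J|}(\Lk(J))$. Your treatment is in fact more careful than the paper's terse reference: you correctly observe that the converse direction requires torsion-freeness of each individual link summand, which does not follow immediately from torsion-freeness of $H^d$ via the associated graded (subquotients of torsion-free groups can have torsion), and your workaround---using the Remark after Theorem~\ref{t:bb} together with Jensen--Meier's Theorem~\ref{t:jm} to obtain an honest direct-sum decomposition of $H^n(BB_L;\zz BB_L)$---is a legitimate way to close that gap.
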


As explained in \cite[Sec.~6]{dm}, for graph products of finite groups, Theorem~\ref{t:ddjo2} leads to a slightly different condition.
An $m$-dimensional simplicial complex $L$ has \emph{punctured homology concentrated in dimension $m$} (abbreviated $PH^m$) if for each closed simplex $\gs$ of $L$, $\ol{H}_*(L-\gs)$ is torsion free and concentrated in degree $m$.
The $PH^m$ condition implies that $L$ is Cohen--Macaulay but is not equivalent to it (cf.~\cite[Cor.~6.9]{dm}).
\begin{proposition}[{\cite[Theorem~6.2]{dm}} and also cf.~\cite{hm}]\label{p:gpduality}
	Let $G$ be the graph product of a collection of nontrivial finite groups.
	Then $G$ is a $n$-dimensional duality group if and only if $L$ is $PH^{n-1}$.
\end{proposition}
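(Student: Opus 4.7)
The plan is to apply Theorem~\ref{t:ddjo2} and unpack what the resulting decomposition says. Theorem~\ref{t:ddjo2} gives
\[
H^n(G;\zz G) = \bigoplus_{J \in \cs(L)} H^n(K, K^{S-J}) \otimes \hat A(J),
\]
with each $\hat A(J)$ a nonzero free abelian group. Tensoring with a nonzero free abelian group and taking direct sums both preserve and reflect torsion-freeness and concentration in a single cohomological degree, so the statement that $G$ is an $n$-dimensional duality group is equivalent to the requirement that, for every $J \in \cs(L)$, the graded abelian group $H^*(K, K^{S-J})$ is torsion-free and concentrated in degree $n$.

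The next step is to identify $H^i(K, K^{S-J})$ with $\tilde H^{i-1}(L - \gs_J)$, where $\gs_J$ denotes the closed simplex of $L$ on the vertex set $J$, with the convention $\gs_\emptyset = \emptyset$ (so $L - \gs_\emptyset = L$). Since $K$ is contractible, the long exact sequence of the pair reduces this to showing $K^{S-J} \simeq L - \gs_J$. A point of $|L|$ lying in the interior of a simplex $\gt$ of $L$ corresponds under the barycentric subdivision $\partial K$ to a unique flag simplex $J_1 \subsetneq \cdots \subsetneq J_k = \gt$, and it lies in $K^{S-J} = \bigcup_{s \in S-J} K_s$ precisely when $J_1 \not\subseteq J$. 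Hence $K^{S-J}$ is the complement in $L$ of an open neighborhood of $\gs_J$, and a radial retraction away from the barycenter of $\gs_J$ provides the desired deformation retraction onto $L - \gs_J$. The degenerate extremes $J = \emptyset$ and $J = S$ work out correctly with the convention $\tilde H^{-1}(\emptyset) = \zz$.

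Feeding this identification back into the first step, $G$ is an $n$-dimensional duality group if and only if $\tilde H^*(L - \gs)$ is torsion-free and concentrated in degree $n-1$ for every closed simplex $\gs$ of $L$. By universal coefficients this is equivalent to the same condition on reduced homology, i.e., to the $PH^{n-1}$ condition; the $(n-1)$-dimensionality of $L$ required in the definition of $PH^{n-1}$ is forced by the nonvanishing of $\tilde H_{n-1}(L - \gs_J)$ for at least one $J$. The chief technical obstacle is the homotopy equivalence $K^{S-J} \simeq L - \gs_J$ together with its degenerate cases; the nonvanishing of each $\hat A(J)$, which is what allows the tensor-product summands to turn into an iff statement, is built into its explicit construction in \cite{ddjo2}.
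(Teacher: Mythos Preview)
Your argument is correct and follows exactly the route the paper indicates (deferring the details to \cite{dm}): apply Theorem~\ref{t:ddjo2} and then identify $H^*(K,K^{S-J})\cong \tilde H^{*-1}(L-\gs_J)$ via the homotopy equivalence $K^{S-J}\simeq L-\gs_J$, which the paper itself invokes (citing \cite[Lemma~A.5.5]{dbook}) just before Theorem~\ref{t:cohpjoin}. The only soft spot is your last sentence: nonvanishing of $\tilde H_{n-1}(L-\gs_J)$ for some $J$ yields $\dim L\ge n-1$ but not the reverse inequality, so the $(n-1)$-dimensionality of $L$ is not literally ``forced'' by this alone---this is a harmless boundary issue (it only bites when $L$ is a full simplex and $G$ is finite) rather than a gap in the main argument.
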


\end{document}